\newtheorem{theorem}{Theorem}[section]
\newtheorem{example}[theorem]{Example}
\newtheorem{corollary}[theorem]{Corollary}
\newtheorem{lemma}[theorem]{Lemma}
\newtheorem{proposition}[theorem]{Proposition}
\newtheorem{definition}[theorem]{Definition}
\newtheorem{non-theorem}{Non-Theorem}
\theoremstyle{remark}
\newtheorem{remark}[theorem]{Remark}
\newcommand\abs[1]{|#1|}
\newcommand{\q}{\text{qdeg}}
\newcommand{\f}{\mathbb{F}}
\newcommand{\rp}[1]{\mathbb{RP}^{#1}}
\begin{document}
	\title {Bar-Natan homology for null homologous links in $\rp{3}$}
	\author{Daren Chen}
	\maketitle
	\begin{abstract}
		In this paper, we introduce Bar-Natan homology for null homologous links in $\mathbb{RP}^3$ over the field of two elements. It is a deformation of the Khovanov homology in $\mathbb{RP}^3$ defined by Asaeda, Przytycki and Sikora. We also define an $s$-invariant from this deformation using the same recipe as for links in $S^3$, and prove some genus bound using it. The key ingredient is the notion of 'twisted orientation' for null homologous links and cobordisms in $\mathbb{RP}^3$. 
		
	\end{abstract}
	\section{Introduction}
	
	Recently, Manolescu and Willis introduced the Lee homology and the associate an $s$-invariant for links in $\rp{3}$ in \cite{manolescu2023rasmussen}. This is a deformation of the Khovanov homology in $\rp{3}$ over the base field $\mathbb{Z}$, first introduced by Asaeda, Przytycki and Sikora in \cite{MR2113902} for fields of characteristic $2$, and Gabrov\v{s}ek in \cite{MR3189291} for the base field $\mathbb{Z}$ by fixing some sign conventions.
	
	As for the usual Lee homology, the construction in \cite{manolescu2023rasmussen} works as long as the characteristic of the base field is not $2$. For links in $S^3$, the solution for fields of characteristic $2$ is to use the Bar-Natan deformation of the Khovanov homology,  introduced by Bar-Natan in \cite{MR2174270},  instead of the Lee deformation. Here, we will do the same for null homologous links in $\rp{3}$. Similar to the $S^3$ case, the homology itself admits an easy description in terms of the number of components of the links. (Note that $H_1({\rp{3},\mathbb{Z}})= \mathbb{Z}_2$, the property that $\left[L\right] =0 \in H_1({\rp{3},\mathbb{Z}})$ doesn't depend on the choice of orientation on components of $L$, so we can talk about null homologous links without specifying the orientations of $L$.)
	
	\begin{theorem}
		For a twisted oriented null homologous link $L \subset \rp{3}$, one can associate a Bar-Natan chain complex $\mathit{CBN}(L)$ over the base field $\mathbb{F} = \mathbb{F}_2$, whose homology $\mathit{HBN}(L)$ is an invariant of twisted oriented $L$ as a bigraded vector space. More specifically, we have 
\begin{equation*}
	dim(\mathit{HBN}(L)) =
	\begin{cases}
		0 & \text{if $L$ has some component which is non-zero in $H_1(\rp{3},\mathbb{Z})$;}\\
		2^{|L|} & \text{otherwise, $i.e.,$ all components of $L$ are null homologous,}\\
	\end{cases}       
\end{equation*} and there is a basis of $\mathit{HBN}(L)$ given by $ \left\{s_o\mid o \text{ is a twisted orientation of } L\right\}$.
\label{thm:bar natan homology}
	\end{theorem}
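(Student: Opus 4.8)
The plan is to transplant the by-now standard computation of Bar--Natan homology for links in $S^3$, due in essence to Lee and Bar--Natan, with twisted orientations supplying the extra bookkeeping that the non-orientable surface $\rp{2}$ forces on us; invariance of $\mathit{HBN}(L)$ under the twisted Reidemeister moves I would establish exactly as for the Asaeda--Przytycki--Sikora homology, via the Bar--Natan cobordism maps, so below I concentrate on the dimension and the basis. The algebraic engine: fixing the deformation parameter to $1$, the relevant Frobenius algebra over $\f_2$ is $A=\f_2[X]/(X^2+X)$, with idempotent basis $\mathbf a=X$, $\mathbf b=X+1$, in which multiplication is diagonal ($\mathbf a^2=\mathbf a$, $\mathbf b^2=\mathbf b$, $\mathbf a\mathbf b=0$) and, since $\{\mathbf a,\mathbf b\}$ is self-dual for the Frobenius pairing, comultiplication is diagonal as well ($\Delta\mathbf a=\mathbf a\otimes\mathbf a$, $\Delta\mathbf b=\mathbf b\otimes\mathbf b$). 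In the cube of resolutions for $\mathit{CBN}(L)$ a contractible circle carries $A$ and a non-contractible circle carries a fixed $1$-dimensional module, as in the Khovanov case; rewriting each contractible circle's label in the $\{\mathbf a,\mathbf b\}$ basis, a merge of two contractible circles becomes the projection onto matching labels and a split becomes the diagonal, precisely as in $S^3$.

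\emph{Canonical cycles.} A twisted orientation $o$ of $L$ exists if and only if every component of $L$ crosses $\rp{2}_{\infty}$ evenly, i.e.\ is null homologous in $H_1(\rp{3},\mathbb{Z})=\mathbb{Z}_2$; when it exists there are $2^{|L|}$ of them. The associated twisted oriented (Seifert) resolution $r_o$ then consists only of contractible circles: since the twisted orientation flips on crossing $\rp{2}_{\infty}$, each twisted Seifert circle meets $\rp{2}_{\infty}$ an even number of times and hence bounds a disk in $\rp{2}$. I then define $s_o\in\mathit{CBN}(r_o)\subset\mathit{CBN}(L)$ by labeling each circle of $r_o$ with $\mathbf a$ or $\mathbf b$ according to Lee's rule --- the ``sense'' of the circle (fixed using the twisted orientation together with the disk it bounds) XOR the nesting parity of those disks. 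The computation that $d\,s_o=0$ is the same local calculation as in Lee's lemma: every cube-edge leaving $r_o$ merges two circles carrying opposite labels and so sends $s_o$ to $\mathbf a\mathbf b=0$; one also checks that such an edge never converts contractible circles into a non-contractible one, so nothing escapes the $A$-world in which this vanishing holds.

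\emph{Dimension, basis, and the vanishing case.} For the matching upper bound I would run the delooping-and-cancellation procedure of Bar--Natan's ``divide and conquer'': deloop every contractible circle in every resolution, so that $\mathit{CBN}(L)$ becomes homotopy equivalent to a complex generated by enhanced states (a resolution together with an $\mathbf a/\mathbf b$ label on each contractible circle), in which every ``matching'' differential component is an isomorphism; cancelling all of these by Gaussian elimination, the heart of the matter is a combinatorial lemma identifying the surviving generators with the twisted orientations of $L$ --- each surviving enhanced state is forced to be $r_o$ with the Lee labels for a unique $o$, the leftover differential vanishes, and the resulting class is $[s_o]$. Hence $\dim\mathit{HBN}(L)=2^{|L|}$ with basis $\{s_o\}$ when every component is null homologous, while if some component is homologically essential no enhanced state can survive (the non-contractible circles it forces have no matching partner and cannot be eliminated), so $\mathit{HBN}(L)=0$ --- the same dichotomy Manolescu--Willis observe for Lee homology. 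Linear independence of the $s_o$ can also be seen directly, since $s_o$ and $s_{o'}$ either lie in distinct summands $\mathit{CBN}(r_o)\neq\mathit{CBN}(r_{o'})$ of the chain group or share a resolution but differ in some circle's label.

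\emph{Main obstacle.} The genuinely new work, and where I expect the difficulty to lie, is geometric--combinatorial and concentrated around $\rp{2}_{\infty}$: (i) showing that $r_o$ has only contractible circles and that Lee's labeling rule --- ``sense'' and ``nesting'' --- even makes sense on the non-orientable $\rp{2}$, which is exactly what twisted orientations are designed to repair; (ii) tracking how a single surgery in the cube can change a circle from contractible to non-contractible and how it interacts with the atomic non-contractible circles, which, unlike contractible ones, cannot be delooped; and (iii) the bijection between surviving enhanced states and twisted orientations, i.e.\ verifying that away from the non-contractible locus the Gaussian elimination closes up exactly as over $S^3$, and that anything touching a non-contractible circle gets cancelled away when $[L]\neq 0$. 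By contrast, the remaining ingredients --- the idempotent basis, the cycle computation granted the labeling lemma, and the independence --- are faithful transcriptions of the $S^3$ arguments.
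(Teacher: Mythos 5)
Your proposal founders on a structural misreading of what the cube of resolutions looks like. For a null homologous link $L\subset\rp{3}$ — even one whose individual components are homologically essential — every circle in every resolution is a local (contractible) unknot: two disjoint essential circles in $\rp{2}$ must intersect, so a crossingless diagram carries at most one, while the mod $2$ class of any resolution equals $[L]=0$, forcing an even number, hence none. Consequently there is no ``fixed $1$-dimensional module for non-contractible circles,'' no surgery in the cube ``changes a circle from contractible to non-contractible,'' and your argument for the vanishing case — that essential components force non-contractible circles which ``have no matching partner and cannot be eliminated'' — argues from a configuration that never occurs. The genuinely new ingredient, which your sketch never specifies, is the $1$-$1$ bifurcation: an edge map $f:V\to V$ which is forced to be $\mathrm{id}_V$ (Lemma \ref{lem:1-1bifurcation}), and checking $d^2=0$ for the new two-crossing configurations in $\rp{2}$ is part of constructing $\mathit{CBN}(L)$ at all. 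In the paper the vanishing when some component is essential comes from exactly this feature: in the induction of the dimension count, when two essential components share a crossing the twisted orientations of $D_0$ are compatible with those of $D_1$ under the change of smoothing, the connecting map in the long exact sequence is an isomorphism, and the homology dies. Your delooping/Gaussian-elimination route could in principle see the same thing (the $a\mapsto a$, $b\mapsto b$ components of $f$ are cancellable isomorphisms), but the statement you yourself call ``the heart of the matter'' — that the survivors are exactly the twisted orientations, and that everything cancels when a component is essential — is precisely what is left unproved, and the mechanism you propose for it is the wrong one.

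Two further gaps are real even granting the framework. First, the well-definedness of the Lee-type labels: circles of the twisted oriented resolution generally cross $U_1$, so a twisted orientation does not orient them and ``the sense of the circle'' has no immediate meaning; the paper defines the label through the double cover and must prove (Lemma \ref{lem:assignment of a and b}) that the two lifts of a circle receive the same value — you flag this as obstacle (i) but do not resolve it, and the cycle computation for $s_o$ also needs the $\rp{2}$-specific parity argument (Proposition \ref{prop:canonical generator}) showing the two arcs at a crossing lie on different circles, which is not purely local. Second, invariance is not ``exactly as for Asaeda--Przytycki--Sikora'': the overall shifts use $n_\pm$ computed from the twisted orientation, the R-V move reverses arrows, and the R-II and R-III checks involve edges that are $1$-$1$ bifurcations (Proposition \ref{prop:R moves}). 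The paper's actual proof of the dimension statement is Lee's induction on crossings with the long exact sequence, with case analysis on how the resolved strands meet $U_1$ and on whether components are essential; your alternative route is plausible in outline, but as written the steps that differ from the $S^3$ story are exactly the steps you defer or get wrong.
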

See Definition \ref{def:twisted orientation} \ref{def:twisted orientation on links} and Lemma \ref{lemma:twisted orientation} for the notion of twisted orientation on null homologous links in $\rp{3}$. In short, a twisted orientation is an assignment of arrows on each segment of the link projection in $\rp{2}$, which is reversed each time it crosses a fixed essential unknot. More canonically, it is an orientation on the double cover $\widetilde{L}$ in $S^3$ of $L$ which is reversed by the deck transformation on $S^3$ of the covering map $S^3\to\rp{3}$. In particular, if $L$ has some homologically essential components, then there is no such twisted orientation on $L$, and $\mathit{HBN}(L)$ vanishes in this case.

The reason of the appearance of the twisted orientation is when we perform the Bar-Natan deformation, we are forced to assign the nontrivial map $id_V$ to the $1$-$1$ bifurcation in the definition of the Bar-Natan chain complex. This is different from the Lee deformation and the usual Khovanov homology in $\rp{3}$, where the map assigned to the $1$-$1$ bifurcation is the $0$ map. Therefore, we introduce the extra twisting to make the $1$-$1$ bifurcation behave more or less in the same way as $1$-$2$ and $2$-$1$ bifurcations. By using the notion of 'twisted orientation' instead of 'orientation', essentially all the proofs for the usual Bar-Natan/Lee homology in $S^3$ as in \cite{MR2173845}, \cite{MR2729272} and \cite{MR2286127} work in our setting with minor changes. 

As expected, since $dim(\mathit{HBN}(K)) =2$ for a null homologous knot, one can define an $s$-invariant, denoted as $s^{BN}_{\rp{3}}(K)$, from the quantum filtration on $CBN(K)$, and use it to establish a genus bound in the usual way. See Definition \ref{def:smin and smax} and \ref{def:s} for the definition of $s^{BN}_{\rp{3}}(K)$. It's worth noting that unlike the usual case where the genus bound is for the class of orientable slice surfaces, here we obtain a genus bound for twisted orientable slice surfaces. See Definition \ref{def:twisted orientable cobordism} and \ref{def:twisted orientable slice surface} for the precise definitions of twisted orientable cobordisms and slice surfaces. Roughly speaking, it means the double cover of the surface in $S^3\times I$ is orientable, and the fiberwise deck transformation $\tau $ in the $S^3$-direction reverses the orientation. We have the following relation between $s^{BN}_{\rp{3}}$ and the Euler characteristics of the twisted orientable slice surface, which is similar to the usual statement for the $s$-invariant in $S^3$. 

\begin{theorem}
Suppose $\Sigma:L\to L'$ is a twisted orientable cobordism between the null homologous links $L$ and $L'$ in $\rp{3} \times I$, then one can define a filtered chain map of filtration degree $\chi(\Sigma)$, \[F_{\Sigma}:\mathit{CBN}_{*,*}(L) \to \mathit{CBN}_{*,*}(L'),\] such that \[F_{\Sigma}(\left[s_o\right]) = \sum_{\left\{o_i\right\}}\left[s_{o_{i_{|L'}}}\right].\]
Here, $o$ is a twisted orientation on $L$, $\left\{o_i\right\}$ is the set of twisted orientations on $\Sigma$ that restrict to $o$ on $L$, $o_{i_{|L'}}$ represents the restriction of such orientations on $L'$, and $s_o$ (respectively $s_{o_{i_{|L'}}}$) are the corresponding canonical generators of $\mathit{HBN}(L)$ (respectively $\mathit{HBN}(L')$) defined in Definition \ref{def:canonical generator}.

In particular, when both $L$ and $L'$ are null homologous knots, $F_{\Sigma}$ is a quasi-isomorphism of filtration degree $\chi(\Sigma)$. Further specifying to the case where $L'$ is the trivial unknot in $\rp{3}$, one gets 
\[-\chi(\Sigma) \geq \abs{s^{BN}_{\rp{3}}(L)},\] for any twisted orientable slice surface $\Sigma$ of the knot $L$.
\label{thm:genus bound}
\end{theorem}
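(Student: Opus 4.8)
The plan is to follow the standard Rasmussen/Bar-Natan blueprint, adapted to the twisted-orientation setting. First I would construct the cobordism map $F_\Sigma$ by decomposing the surface $\Sigma \subset \rp{3}\times I$ into elementary cobordisms (births, deaths, saddles, Reidemeister-type isotopies) via a Morse function, assigning to each the corresponding Bar-Natan map on $\mathit{CBN}$, and checking that the composite is a well-defined filtered chain map whose filtration degree is additive and equals $\chi(\Sigma)$. The point where the $\rp{3}$-setting differs from $S^3$ is that one must track how the fixed essential unknot $\gamma$ interacts with the Morse pieces; here the hypothesis that $\Sigma$ is \emph{twisted orientable} — equivalently that the double cover $\widetilde\Sigma \subset S^3\times I$ carries an orientation reversed by the fiberwise deck transformation $\tau$ — is exactly what guarantees that the twisted orientations propagate consistently through each saddle, so that the $1$-$1$ bifurcations (which carry $id_V$, not $0$) fit into the bookkeeping just like the $1$-$2$ and $2$-$1$ bifurcations. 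The invariance of $\mathit{HBN}$ under the choice of decomposition is already packaged in Theorem \ref{thm:bar natan homology} and its proof, so I would only need the functoriality-up-to-sign (here, over $\f_2$, genuine functoriality) of these elementary maps.

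The second step is the identification $F_\Sigma([s_o]) = \sum_{\{o_i\}} [s_{o_{i_{|L'}}}]$ on canonical generators. This is a purely local computation: for each elementary cobordism one checks how the Bar-Natan maps act on the canonical generators $s_o$. The key cases are a saddle merging or splitting components, where a twisted orientation $o$ on the source either extends uniquely or fails to extend across the saddle, and the $1$-$1$ handle/bifurcation, where the map $id_V$ sends $s_o$ to $s_{o'}$ for the twisted orientation $o'$ obtained by flipping arrows past $\gamma$. Summing over all ways the twisted orientation on $L$ extends over $\Sigma$ — noting that for a null homologous cobordism between null homologous links every such extension exists and restricts to a twisted orientation on $L'$ — gives the stated formula. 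This mirrors the $S^3$ computation in \cite{MR2729272}, \cite{MR2286127}, with 'orientation' replaced by 'twisted orientation' throughout.

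For the genus bound, specialize to null homologous knots $L$, $L'$. Since $\dim \mathit{HBN}(L) = \dim \mathit{HBN}(L') = 2$ and $F_\Sigma$ carries the two canonical generators of one to (a nonzero combination of) those of the other — over $\f_2$ the two twisted orientations of a knot are $o$ and its global reversal, and one checks $F_\Sigma$ is surjective hence an isomorphism on $\mathit{HBN}$ — the map $F_\Sigma$ is a quasi-isomorphism of filtration degree $\chi(\Sigma)$. Now take $L'$ the trivial unknot in $\rp{3}$, so $\Sigma$ is a twisted orientable slice surface for $L$. Running the argument for $\Sigma$ gives, via the filtered quasi-isomorphism and the definition of $s^{BN}_{\rp{3}}$ in terms of the quantum filtration (Definitions \ref{def:smin and smax}, \ref{def:s}), the inequality $s^{BN}_{\rp{3}}(L) \le -\chi(\Sigma) + s^{BN}_{\rp{3}}(U)$; running it for the reversed surface $\bar\Sigma: U \to L$ gives the reverse inequality, and since $s^{BN}_{\rp{3}}(U) = 0$ one concludes $-\chi(\Sigma) \ge |s^{BN}_{\rp{3}}(L)|$.

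The main obstacle I anticipate is the first step: verifying that $F_\Sigma$ is well-defined and filtered, i.e.\ that the assignment on elementary cobordisms descends to an invariant of $\Sigma$ rel boundary and that each elementary map shifts the quantum filtration by the Euler characteristic of the corresponding handle. In the $\rp{3}$ setting this requires care with how the essential unknot $\gamma$ meets the Morse pieces and with the $id_V$ map on the $1$-$1$ bifurcation — one must confirm that no extra filtration jump is introduced there and that the twisted orientation bookkeeping is genuinely consistent (not merely up to a sign, which is automatic over $\f_2$, but that the combinatorics of arrow-flipping past $\gamma$ closes up). Once this local/filtered analysis is in place, the remaining steps are formal.
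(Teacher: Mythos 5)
Your proposal follows essentially the same route as the paper: decompose $\Sigma$ into Reidemeister cobordisms and elementary Morse pieces, use the Frobenius maps (with $f=id_V$ on $1$-$1$ bifurcations) to get a filtered chain map of degree $\chi(\Sigma)$, verify the canonical-generator formula case by case using that the twisted orientation restricts to each piece and each intermediate link, and then deduce the quasi-isomorphism for knots and the genus bound from the quantum filtration exactly as in the $S^3$ argument. This matches the paper's proof (Proposition \ref{prop:bound on Euler characteristics}, Corollary \ref{coro:chi bound}, and Proposition \ref{prop:R moves}), so no further comparison is needed.
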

A twisted orientable slice surface could be either orientable or unorientable, and not every (un)orientable slice surface is necessarily twisted orientable. See Example \ref{ex:twisted orientable} for different possibilities of the combination of (not) twisted orientable/(un)orientable cobordisms. 

The $s$-invariant $s^{BN}_{\rp{3}}(K)$ shares similar formal properties with the usual $s$-invariant in $S^3$. See Proposition \ref{prop:mirroring} for mirroring, Proposition \ref{prop:local connected sum} for taking a local connected sum with a local knot and Proposition \ref{prop:positive} for 'positive knots' in the sense of twisted orientations. All of these follow from the same arguments as in the case of $S^3$. We also give a simple example to demonstrate the difference between the $s$-invariant $s^{BN}_{\rp{3}}$ defined in this paper and the $s$-invariant defined as in \cite{manolescu2023rasmussen}. Consider the following know as in Figure \ref{fig:example_difference}, which is 'positive' with respect to the twisted orientation, so we have \[s^{BN}_{\rp{3}}(K) =3. \] But one can construct an orientable slice surface of this knot with genus $1$, so the $s$-invariant of it defined in \cite{manolescu2023rasmussen} satisfies \[\abs{s(K)} \leq 2.\] In addition, we provide a family of knots in which their two $s$-invariants differ by an arbitrarily large amount, obtained by inserting more and more full twists in this example.See Example \ref{ex:a calculation} for a more detailed discussion.

	\begin{figure}[h]
	\[{
		\fontsize{7pt}{9pt}\selectfont
		\def\svgscale{0.6}
		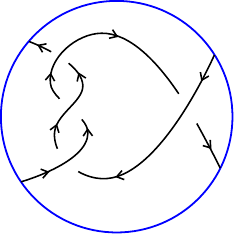
	}\]
	\caption{An example of difference in $s$-invariants defined using Lee deformation and Bar-Natan deformation}
	\label{fig:example_difference}
\end{figure}
\textbf{Organization of the paper} In Section \ref{sec:Bar Natan homology}, we define the Bar-Natan deformation of the Khovanov complex for null homologous links in $\rp{3}$ over the field $\mathbb{F}_2$. We also introduce the notion of twisted orientation and prove Theorem \ref{thm:bar natan homology}. In Section \ref{sec:cobordism}, we define cobordism maps on the Bar-Natan chain complex for twisted orientable cobordisms and establish the first statement of Theorem \ref{thm:genus bound}. In Section \ref{sec: invariant}, we define the Bar-Natan $s$-invariant $s^{BN}_{\rp{3}}$, discuss its formal properties and complete the proof of Theorem \ref{thm:genus bound}. In Section \ref{sec:further directions}, we discuss some further directions based on this work.

\textbf{Acknowledgements.} The author would like to thank Cole Hugelmeyer, Robert Lipshitz, and Paul Wedrich and Mike Willis for helpful conversations. The author wants to express his appreciation particularly to his advisor Ciprian Manolescu for all the guidance, support and encouragement during this project and all through the author's graduate school. This work constitutes a part of the author's PhD thesis.

This work is partially  supported by NSF grant DMS-2003488.

	\section{Bar-Natan homology for null homologous links in $\rp{3}$}
	\label{sec:Bar Natan homology}
	Throughout the paper, we work over the field $\mathbb{F} = \mathbb{F}_2$ of two elements, unless otherwise specified.
	
	Let $V=\mathbb{F}\langle1,x\rangle$ be the graded vector space generated by $1,x$, with quantum grading given by \[\q(1)=1, \,\,\q(x) = -1.\]
	\begin{definition}
		 The \textbf{Bar-Natan Frobenius algebra structure on $V$} is a deformation of the usual Frobenius structure on $V=H^*(S^2)$. It consists of the tuple $\left(V,m,\Delta,\iota,\eta\right)$, where the multiplication $m:V\otimes V\to V$ is given by \[1\otimes 1\to 1,\,\,\,\,\,\,\,\, 1\otimes x\to x,\,\,\,\,\,\,\,\, x\otimes 1 \to x,\,\,\,\,\,\,\,\,x\otimes x\to x,\] the comultiplication $\Delta:V \to V\otimes V$ is given by \[1\to 1\otimes x + x\otimes 1 + 1\otimes 1,\,\,\,\,\,\,\,\, x\to x\otimes x, \]
		 the unit $\iota:\mathbb{F} \to V$ is given by \[1\to 1,\] and the counit $\eta:V\to \mathbb{F}$ is given by \[1\to 0,\,\,\,\,\,\,\,\,x\to 1.\]
		 \label{def:frobenius algebra}
	\end{definition}
\begin{remark}
	Some signs in the comultiplication $\Delta$ might look different from the usual convention, but it makes no difference as we are working over the field $\f$ of characteristic $2$. We also use the version in which the extra deformation variable of the Bar-Natan deformation is set to $1$, so all the structure maps are filtered instead of being grading-preserving after suitable shifting in the quantum grading.
\end{remark}

\begin{definition}
	A link $L$ in $\rp{3}$ is \textbf{null homologous} if \[\left[L\right] = 0\in H_1(\rp{3},\mathbb{Z}) = \mathbb{Z}/2.\]
	If particular, we don't require $L$ to be oriented, as different orientations won't change $[L]$ in $H_1(\rp{3},\mathbb{Z})$, which is $\mathbb{Z}/2$.
\end{definition}
    Let $L$ be a null homologous link in $\rp{3}$. Suppose $L$ is away from some fixed point $*\in \rp{3}.$ Then, we obtain a link projection diagram $D$ of $L$ in $\rp{2}$, using the twisted $I$-bundle structure $\rp{3}\backslash \left\{*\right\} \cong \rp2 \widetilde{\times }I$. Suppose $D$ has $n$ crossings. By choosing an ordering on the crossings, we form the usual cube  \[\underline{2}^n := (0\rightarrow1)^n\] of smoothings $D_v$ of $D$ for each vertex $v\in \underline{2}^n,$ following the convention of $0$- and $1$-smoothings as indicated in Figure \ref{fig:smoothing}.
    
    	\begin{figure}[h]
    	\[
    	{
    		\fontsize{7pt}{9pt}\selectfont
    		\def\svgscale{0.3}
\begingroup%
  \makeatletter%
  \providecommand\color[2][]{%
    \errmessage{(Inkscape) Color is used for the text in Inkscape, but the package 'color.sty' is not loaded}%
    \renewcommand\color[2][]{}%
  }%
  \providecommand\transparent[1]{%
    \errmessage{(Inkscape) Transparency is used (non-zero) for the text in Inkscape, but the package 'transparent.sty' is not loaded}%
    \renewcommand\transparent[1]{}%
  }%
  \providecommand\rotatebox[2]{#2}%
  \newcommand*\fsize{\dimexpr\f@size pt\relax}%
  \newcommand*\lineheight[1]{\fontsize{\fsize}{#1\fsize}\selectfont}%
  \ifx\svgwidth\undefined%
    \setlength{\unitlength}{589.01869859bp}%
    \ifx\svgscale\undefined%
      \relax%
    \else%
      \setlength{\unitlength}{\unitlength * \real{\svgscale}}%
    \fi%
  \else%
    \setlength{\unitlength}{\svgwidth}%
  \fi%
  \global\let\svgwidth\undefined%
  \global\let\svgscale\undefined%
  \makeatother%
  \begin{picture}(1,0.266363)%
    \lineheight{1}%
    \setlength\tabcolsep{0pt}%
    \put(0,0){\includegraphics[width=\unitlength,page=1]{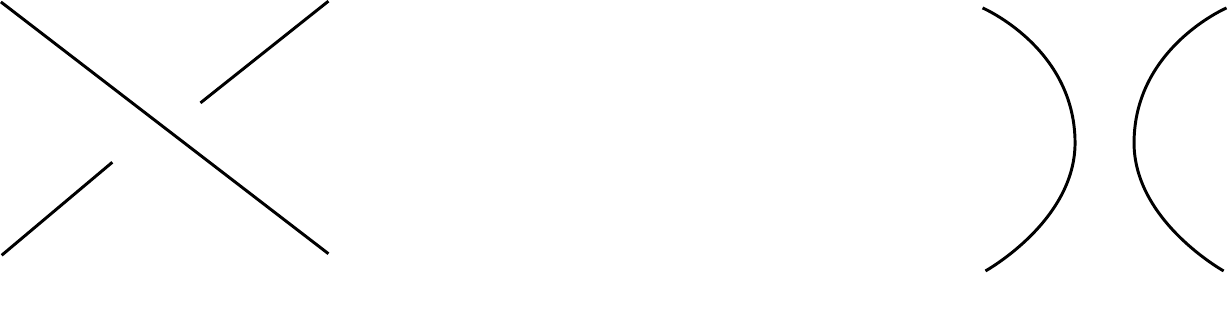}}%
    \put(0.40,-0.01519316){\color[rgb]{0,0,0}\makebox(0,0)[lt]{\lineheight{1.25}\smash{\begin{tabular}[t]{l}0-Smoothing\end{tabular}}}}%
    \put(0.8,-0.01526631){\color[rgb]{0,0,0}\makebox(0,0)[lt]{\lineheight{1.25}\smash{\begin{tabular}[t]{l}1-Smoothing\end{tabular}}}}%
    \put(0,0){\includegraphics[width=\unitlength,page=2]{smoothing.pdf}}%
  \end{picture}%
\endgroup%

    	}
    	\]
    	\caption{$0$ and $1$- Smoothings}
    	\label{fig:smoothing}
    \end{figure}
    Since we start with a null homologous link and the smoothing process doesn't change the homology class, it is easy to see that each component in the smoothing $D_v$ is a local unknot in $\rp{2}$, for every vertex $v\in\underline{2}^n$. We will apply the Bar-Natan Frobenius algebra to this cube of smoothings, $i.e.$, we associate the vector space $V$ to each local unknot and then taking tensor product for each smoothing $D_v$. 
    
    As for the edge maps, they correspond to change $0$-smoothing to $1$-smoothing at one crossing. Unlike the usual case for link projection in $\mathbb{R}^2$, there are three possibilities as indicated in Figure \ref{fig:bifurcation}
    : $1$-$2$ bifurcation, which splits one circle into two circles; $2$-$1$ bifurcation, which merges two circles into one circle; and $1$-$1$ bifurcation, which twists one circle into another circle. Throughout th the paper, we will draw $\rp{2}$ as a disk with half of its boundary (the blue circle) identified with the other half. We assign the multiplication map \[m:V\otimes V\to V\] to the $2$-$1$ bifurcation and the comultiplication map \[\Delta: V \to V\otimes V\] to the $1$-$2$ bifurcation map as usual. For the $1$-$1$ bifurcation, it should correspond to a map \[f:V\to V,\] which is not included in the Bar-Natan Frobenius algebra structure of $V$ as defined above. It turns out that there is a unique choice of $f$ over the base field $\mathbb{F}_2$ which gives a filtered chain complex if we feed the cube of smoothings by the Frobenius algebra $V$, and use $m,\Delta, f$ for the edge maps.
    	\begin{figure}[t]
    	\[
    	{
    		\fontsize{7pt}{9pt}\selectfont
    		\def\svgwidth{5.5in}
\begingroup%
  \makeatletter%
  \providecommand\color[2][]{%
    \errmessage{(Inkscape) Color is used for the text in Inkscape, but the package 'color.sty' is not loaded}%
    \renewcommand\color[2][]{}%
  }%
  \providecommand\transparent[1]{%
    \errmessage{(Inkscape) Transparency is used (non-zero) for the text in Inkscape, but the package 'transparent.sty' is not loaded}%
    \renewcommand\transparent[1]{}%
  }%
  \providecommand\rotatebox[2]{#2}%
  \newcommand*\fsize{\dimexpr\f@size pt\relax}%
  \newcommand*\lineheight[1]{\fontsize{\fsize}{#1\fsize}\selectfont}%
  \ifx\svgwidth\undefined%
    \setlength{\unitlength}{1002.50571894bp}%
    \ifx\svgscale\undefined%
      \relax%
    \else%
      \setlength{\unitlength}{\unitlength * \real{\svgscale}}%
    \fi%
  \else%
    \setlength{\unitlength}{\svgwidth}%
  \fi%
  \global\let\svgwidth\undefined%
  \global\let\svgscale\undefined%
  \makeatother%
  \begin{picture}(1,0.13054486)%
    \lineheight{1}%
    \setlength\tabcolsep{0pt}%
    \put(0.482,0.0705259){\color[rgb]{0,0,0}\makebox(0,0)[lt]{\lineheight{1.25}\smash{\begin{tabular}[t]{l}$1$-$2$\end{tabular}}}}%
    \put(0,0){\includegraphics[width=\unitlength,page=1]{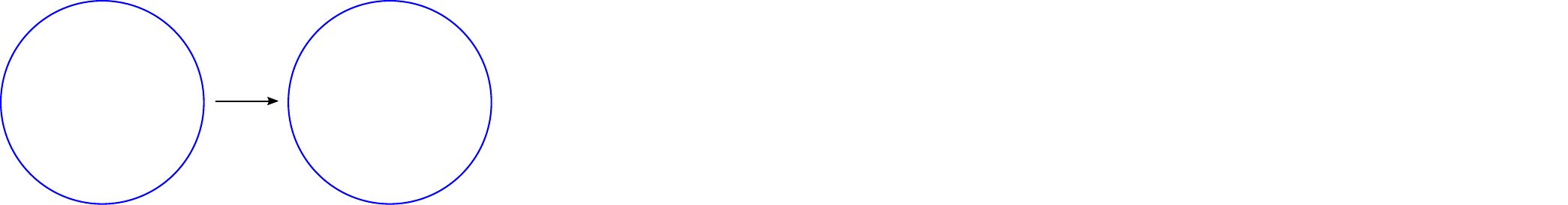}}%
    \put(0.142,0.06909197){\color[rgb]{0,0,0}\makebox(0,0)[lt]{\lineheight{1.25}\smash{\begin{tabular}[t]{l}$2$-$1$      \end{tabular}}}}%
    \put(0,0){\includegraphics[width=\unitlength,page=2]{bifurcation.pdf}}%
    \put(0.832,0.06977455){\color[rgb]{0,0,0}\makebox(0,0)[lt]{\lineheight{1.25}\smash{\begin{tabular}[t]{l}$1$-$1$\end{tabular}}}}%
  \end{picture}%
\endgroup%

    	}
    	\]
    	\caption{$2$-$1$, $1$-$2$ and $1$-$1$ bifurcations}
    	\label{fig:bifurcation}
    \end{figure}
    \begin{lemma}
    	\label{lem:1-1bifurcation}
    	The only possible choice of $f:V\to V$ for the $1$-$1$ bifurcation map over $\mathbb{F}_2$ that gives a chain complex such that $f:V\to V\left[1\right]$ is filtered is \[f= id_V.\] Here $V\left[1\right]$ is the graded vector space obtained from $V$ by shifting up by $1$ in the quantum grading.
    \end{lemma}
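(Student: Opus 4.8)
The plan is to pin down $f$ from two inputs: the filtration condition, which is pure linear algebra on $V$, and the chain-complex condition $d\circ d=0$, which is equivalent to the commutativity of all the square faces of the cube of smoothings. First I would fix coordinates, writing $f(1)=\alpha\cdot 1+\beta\cdot x$ and $f(x)=\gamma\cdot 1+\delta\cdot x$ with $\alpha,\beta,\gamma,\delta\in\mathbb{F}_2$. Since the quantum grading on $V$ takes only the values $\pm1$, there is no nonzero homogeneous endomorphism of $V$ of degree $-1$, so $f$ is necessarily a purely filtered correction term; asking that $f\colon V\to V[1]$ be filtered amounts to $\q(f(v))\geq\q(v)-1$ for homogeneous $v$. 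Testing this on $v=1$ (with $\q(v)=1$) kills the $x$-component of $f(1)$, i.e.\ $\beta=0$, while $v=x$ gives no constraint, so after the filtration condition $f$ depends only on $\alpha,\gamma,\delta$.

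Next I would impose $d\circ d=0$. Over $\mathbb{F}_2$ this says that every square face of the cube commutes. A face whose two edges are both $1$-$2$ or $2$-$1$ bifurcations commutes for the usual reason, namely that $(V,m,\Delta,\iota,\eta)$ is a commutative Frobenius algebra; this is the argument of \cite{MR2113902}, and it is insensitive to how the circles sit in $\rp2$. The new constraints come from the square faces that carry a $1$-$1$ bifurcation, which I would enumerate as in \cite{MR2113902}. Two types already suffice: (i) a face one of whose routes is a merge followed by a $1$-$1$ bifurcation on the resulting circle while the other route is a $1$-$1$ bifurcation on one of the two incoming circles followed by that merge, which forces $f\circ m=m\circ(f\otimes\mathrm{id}_V)$; and (ii) a face based at a one-circle smoothing one of whose routes is two successive $1$-$1$ bifurcations while the other is a $1$-$2$ bifurcation followed by the $2$-$1$ bifurcation that re-merges the two resulting circles, which forces $f\circ f=m\circ\Delta$. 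Both configurations occur in diagrams of null homologous links, so any admissible $f$ must satisfy both relations.

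I would then finish with a short computation. From the explicit formulas, $m\circ\Delta=\mathrm{id}_V$ over $\mathbb{F}_2$, since $m\Delta(1)=x+x+1=1$ and $m\Delta(x)=x$. Expanding $f\circ m=m\circ(f\otimes\mathrm{id}_V)$ on the basis $\{1\otimes1,\,1\otimes x,\,x\otimes1,\,x\otimes x\}$ of $V\otimes V$ forces $\gamma=0$ and $\delta=\alpha+\beta$; with $\beta=0$ this gives $f=\alpha\cdot\mathrm{id}_V$ for some $\alpha\in\{0,1\}$. Relation (ii) then reads $\alpha^2\,\mathrm{id}_V=\mathrm{id}_V$, so $\alpha=1$ and $f=\mathrm{id}_V$; conversely $\mathrm{id}_V$ makes every face commute, so it is the unique admissible choice.

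The step I expect to be the main obstacle is the geometric bookkeeping in the second paragraph: correctly listing the local square faces that involve a $1$-$1$ bifurcation and verifying that the two types used above genuinely arise---for which it is enough to exhibit one small null homologous diagram realizing both---while the algebra that follows is routine.
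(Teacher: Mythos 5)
Your proposal is correct and follows essentially the same route as the paper: commutativity of the two-crossing square faces (plus filteredness of $f\colon V\to V[1]$) yields the relations $f\circ f=m\circ\Delta$ and $f\circ m=m\circ(f\otimes \mathrm{id}_V)$ (the paper also records the companion relation with $\Delta$), and solving these over $\mathbb{F}_2$ forces $f=\mathrm{id}_V$; the only difference is that you carry out explicitly the linear algebra the paper dismisses as ``easy to see.'' The geometric step you flag as the main obstacle is in fact immediate: the paper's Figure 5 square (a null homologous circle meeting $U_1$ twice with two crossings) realizes $f\circ f=m\circ\Delta$, and adding a Reidemeister-I curl to that diagram produces a face forcing $f\circ m=m\circ(f\otimes\mathrm{id}_V)$.
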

		\begin{figure}[h]
	\[
	{
		\fontsize{7pt}{9pt}\selectfont
		\def\svgwidth{4in}
		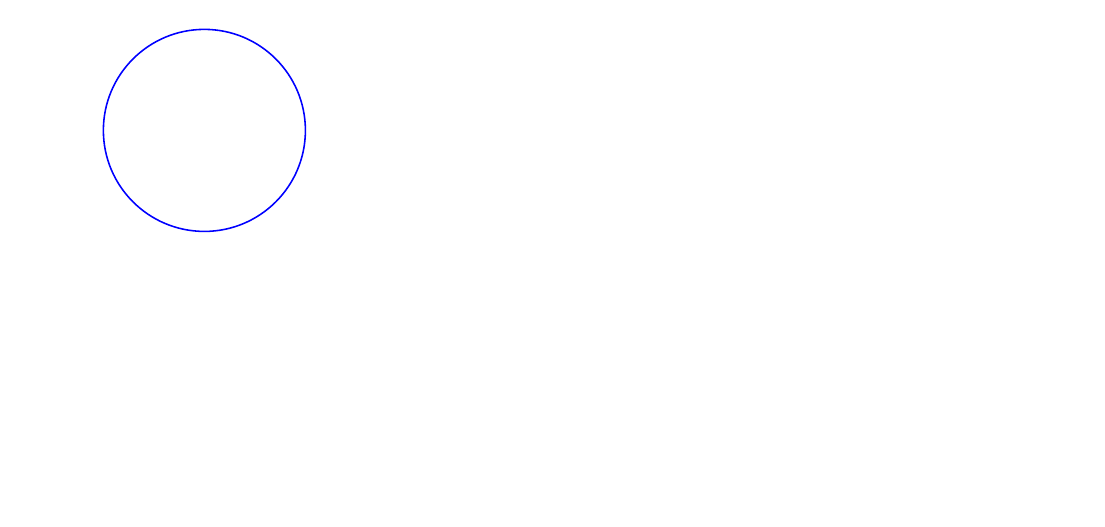
	}
	\]
	\caption{Singular graphs in $\rp{2}$ with 2 singular points. See \cite{MR3189291}}
	\label{fig:singular-curves-with-2-singularity}
\end{figure}
\begin{proof}
	To get a chain complex, we want each face of the cube of smoothings to give a commutative diagram. It amounts to checking that each link diagram in $\rp{2}$ with $2$ crossings gives a commutative square. A list of link diagrams in $\rp{2}$ is obtained from the singular diagrams in Figure \ref{fig:singular-curves-with-2-singularity} by replacing each singular point by an over or under crossing. As we are considering null homologous links, only $(a), (b), (e),(f)$ are possible. A typical square of smoothings and the corresponding maps are shown in Figure \ref{fig:requirement_of_f}.

			\begin{figure}[h]
		\[
		{
			\fontsize{7pt}{9pt}\selectfont
			\def\svgscale{0.5}
\begingroup%
  \makeatletter%
  \providecommand\color[2][]{%
    \errmessage{(Inkscape) Color is used for the text in Inkscape, but the package 'color.sty' is not loaded}%
    \renewcommand\color[2][]{}%
  }%
  \providecommand\transparent[1]{%
    \errmessage{(Inkscape) Transparency is used (non-zero) for the text in Inkscape, but the package 'transparent.sty' is not loaded}%
    \renewcommand\transparent[1]{}%
  }%
  \providecommand\rotatebox[2]{#2}%
  \newcommand*\fsize{\dimexpr\f@size pt\relax}%
  \newcommand*\lineheight[1]{\fontsize{\fsize}{#1\fsize}\selectfont}%
  \ifx\svgwidth\undefined%
    \setlength{\unitlength}{526.84362003bp}%
    \ifx\svgscale\undefined%
      \relax%
    \else%
      \setlength{\unitlength}{\unitlength * \real{\svgscale}}%
    \fi%
  \else%
    \setlength{\unitlength}{\svgwidth}%
  \fi%
  \global\let\svgwidth\undefined%
  \global\let\svgscale\undefined%
  \makeatother%
  \begin{picture}(1,0.47130543)%
    \lineheight{1}%
    \setlength\tabcolsep{0pt}%
    \put(0,0){\includegraphics[width=\unitlength,page=1]{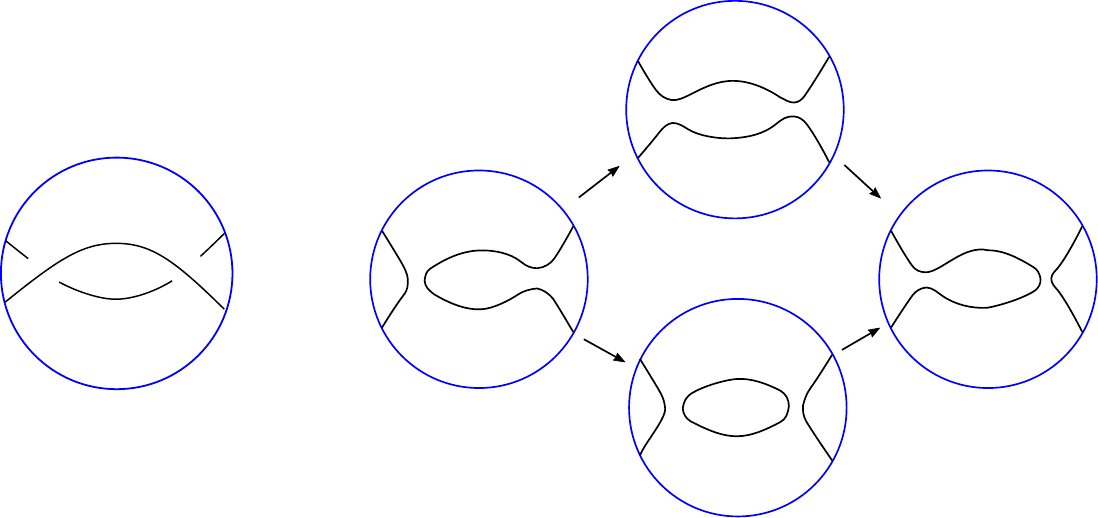}}%
    \put(0.53847355,0.318){\color[rgb]{0,0,0}\makebox(0,0)[lt]{\lineheight{1.25}\smash{\begin{tabular}[t]{l}$f$\end{tabular}}}}%
    \put(0.78,0.32){\color[rgb]{0,0,0}\makebox(0,0)[lt]{\lineheight{1.25}\smash{\begin{tabular}[t]{l}$f$\end{tabular}}}}%
    \put(0.5514897,0.15919511){\color[rgb]{0,0,0}\makebox(0,0)[lt]{\lineheight{1.25}\smash{\begin{tabular}[t]{l}$\Delta$\end{tabular}}}}%
    \put(0.76410774,0.173){\color[rgb]{0,0,0}\makebox(0,0)[lt]{\lineheight{1.25}\smash{\begin{tabular}[t]{l}$m$\end{tabular}}}}%
  \end{picture}%
\endgroup%

		}
		\]
		\caption{One example of a square in the cube of resolution}
		\label{fig:requirement_of_f}
	\end{figure}
	 The condition that these squares being commutative gives the following restrictions on the map $f:V\to V$ :
	\begin{enumerate}
		\item $f^2 = m\circ \Delta = id_V,$
		\item $m\circ (f\otimes id_V) = m\circ (id_V\otimes f) = f\circ m,$
		\item $(f\otimes id_V) \circ\Delta = (id_V \otimes f)\circ \Delta = \Delta \circ f.$
	\end{enumerate}

Also, in defining the chain complex, we will apply a shift in the quantum grading of degree \newline  $|v| = \#$ of $1$'s in $v\in \underline{2}^n$ to the vector space corresponding to the smoothing $D_v$. Therefore, to make the edge map filtered, \textit{i.e.}, non-decreasing in the quantum grading, we also require:\[f:V\to V\left[1\right] \text{ to be filtered.}\]

Now it is easy to see the only possible choice of $f$ satisfying the above conditions is \[f=id_V.\]
\end{proof}

\begin{remark}
	
	In \cite{MR2113902}, Asaeda, Przytycki and Sikora extended the Khovanov homology to links in $\rp{3}$ over the field $\mathbb{F}_2$, where they assigned the $0$ map to the $1$-$1$ bifurcation map. Because they use the usual Frobenius algebra for Khovanov homology, the corresponding condition for $f$ becomes $f^2=m\circ \Delta =0$, so $f=0$ is a natural choice. In \cite{MR3189291}, Gabrov\v{s}ek introduced some sign conventions to make the extension of Khovanov homology work over field of characteristic $0$, where the $1$-$1$ bifurcation map is also assigned $0$. More recently, in \cite{manolescu2023rasmussen} Manolescu and Willis extended the definition of Lee homology to links in $\rp{3}$, where, again, the $1$-$1$ bifurcation map is assigned $0$. The difference in the assignment in the $1$-$1$ bifurcation map finally leads to the difference in the class of slice surfaces that we can control the genus from the $s$-invariant construction in this paper and in \cite{manolescu2023rasmussen}.

In \cite{chen2021khovanovtype}, the author defined some variation of the usual Khovanov homology in $\rp{3}$. One can try apply the Bar-Natan Frobenius algebra structure on $V$ instead of the usual Khovanov Frobenius algebra structure and see what happens. Unfortunately, it won't give more interesting homology theory. The reason is that if one uses the Bar-Natan Frobenius algebra structure, then the requirement for $f$ and $g$ becomes $f\circ g = id_{V_0}, g\circ f = id_{V_1}$, which implies $V_0$ and $V_1$ are isomorphic, and the chain complex will just be a direct sum of $n$ copies of the reduced version of the Bar-Natan chain complex defined in this paper, where $n = \text{dim}(V_0) = \text{dim}(V_1)$. In the Khovanov setting, the equations we need for $f$ and $g$ are $f\circ g = g \circ f=0$, which leaves more room for the choice of $ V_0, V_1, f$ and $g$. Still, one could get different spectral sequences for different choices of $V_0,V_1,f,$ and $g$. We haven't attempted to explore the possibilities in this project.

\end{remark}
By examining the commutativity of the remaining possible $2$-crossing diagrams in $\rp{2}$ which doesn't involve the $1$-$1$ bifurcation (these are local diagrams, which means the check is the same as checking if the Bar-Natan Frobenius algebra structure on $V$ gives a chain complex for links in $S^3$ ), we can define the following chain complex for the link diagram $D$ in $\rp{2}$.

\begin{definition}
	Given an $n$-crossing link diagram $D$ in $\rp{2}$ of a null homologous link, the \textbf{unadjusted Bar-Natan chain complex $\mathit{CBN}_{*,*}^{un}(D)$}  is the bigraded chain complex  
	\[\mathit{CBN}_{i,*}^{un} = \bigoplus_{v\in \underline{2}^n,|v|=i}C(D_v)[i], \,\,\,\,\,\,\,\,\partial = \sum _{e:\text{ edges in  }\underline{2}^n}\partial_e,\]
	where the first grading is the homological grading, and the second grading is the quantum grading. Denote its homology by $\mathit{HBN}^{un} (D)$. Here $C(D_v)= V^{\otimes k_v}$ if the smoothing $D_v$ consists of $k_v$ unknots, $\left[i\right]$ denotes shifting up by $i$ in the quantum grading, and $\partial_e$ applies $m,\Delta $ or $f$ to the involved unknots, depending on whether the edge is a $2$-$1$, $1$-$2$ or $1$-$1$ bifurcation.
	\label{def:unadjusted chain complex}
\end{definition}

It is called the 'unadjusted' Bar-Natan complex as we will do some global shifts in the homological and quantum grading to make it a link invariant, as in the usual definition for Khovanov/Bar-Natan homology in $S^3$. However, the way we do the shift is a bit unconventional, so we delay the discussion until later when we introduce more terminology.

Note that the quantum grading-preserving part of the differential is exactly the same as the differential used in the definition of Khovanov homology for links in $\rp{3}$ over $\mathbb{F}_2$ as in \cite{MR2113902}, where the map associated with the $1$-$1$ bifurcation is the $0$ map. (Note that such a map needs to be grading-preserving from $V$ to $V\left[1\right]$, so $id_V$ doesn't preserve the quantum grading)

Now the natural next step is to check whether the homology only depends on the link rather than the link diagram. This is usually done by checking the invariance of the homology under Reidemeister moves. As discussed in \cite{MR1296890}, for link projections in $\rp{2}$, there is a similar list of Reidemeister moves that relate different projections of isotopic links in $\rp{3}$, as shown in Figure \ref{fig:r-moves}. However, for arguments in the later sections involving $s$-invariant, we need stronger conditions on the induced map by Reidemeister moves, so we again delay the discussion of invariance until later. 

	\begin{figure}[h]
	\[
	{
		\fontsize{7pt}{9pt}\selectfont
		\def\svgscale{0.4}
		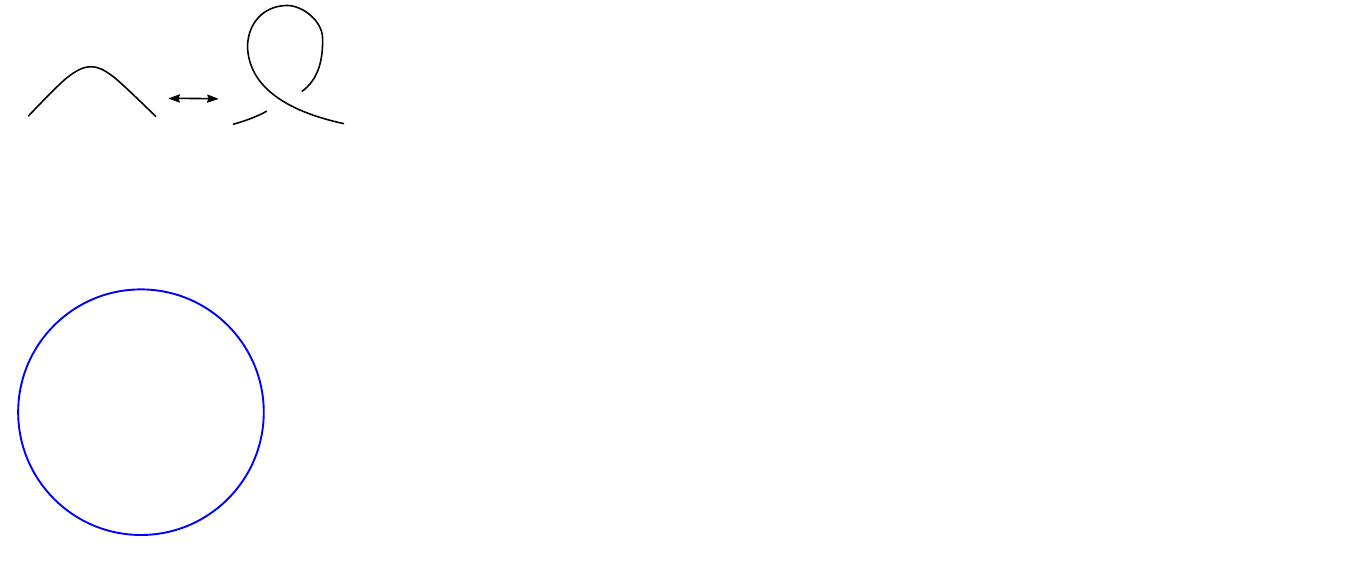
	}
	\]
	\caption{Reidemeister moves in $\rp{3}$}
	\label{fig:r-moves}
\end{figure}

Recall that in the usual Bar-Natan homology for links in $S^3$, the dimension of Bar-Natan homology is actually determined just by the number of components in the link, and there is a canonical basis of the homology corresponding to different choices of orientations on each component of the link. Furthermore, the induced maps by the Reidemeister moves will send this canonical basis to the corresponding ones after the Reidemeister moves. We will prove a similar result for the Bar-Natan homology for null homologous links in $\rp{3}$: the dimension of the homology is determined by the number of null homologous components of the link, a canonical basis of the homology is given by 'twisted orientations' on the link, and Reidemeister moves send the canonical basis to the canonical basis.

For that, we need to introduce the notion of 'twisted orientation'. This is the central notion of the paper. Essentially, all the proof for the usual Bar-Natan homology in $S^3$ works in $\rp{3}$ if we replace 'orientation' in $S^3$ by 'twisted orientation' in $\rp{3}$. 
\begin{definition}
	 Denote $U_1$ as the essential unknot in $\rp{2}$, which is the quotient of the boundary of the disk. In this paper, we draw $\rp{2}$ as identifying the two halves of the boundary of a disk, and $U_1$ is represented by the quotient of the blue circles in all the figures.
\end{definition}
\begin{definition}
Suppose $D$ is a link diagram in $\rp{2}$ such that each component of the link $L$ represented by $D$ in $\rp{3}$ is null homologous. A \textbf{twisted orientation} on $D$ is an assignment of arrows on each segment of $D$, such that it is reversed each time it crosses $U_1$. 
	\label{def:twisted orientation}
\end{definition}
See Figure \ref{fig:twisted-orientation} for an illustration of the twisted orientation, and a comparison with the usual orientation. Note that a twisted orientation only exists if we assume each component of $D$ is null homologous since we can't give an alternating assignment of arrows on a homologically essential component, which intersects $U_1$ an odd number of times. 

More canonically, we have another way to view the twisted orientation, which does not depend on the link diagram $D$.
\begin{definition}
	Suppose $L$ is a link in $\rp{3}$ such that each component of $L$ is null homologous. Let $\widetilde{L}$ denote the double cover of $L$ in $S^3$, and $\tau$ denote the deck transformation of the covering map $S^3\to \rp{3}$. A \textbf{twisted orientation} on $L$ is an orientation on $\widetilde{L}$ which is reversed under the action of $\tau$.
	\label{def:twisted orientation on links}
\end{definition}
\begin{figure}[h]
	\[{
		\fontsize{9pt}{11pt}\selectfont
		\def\svgscale{0.5}
\begingroup%
  \makeatletter%
  \providecommand\color[2][]{%
    \errmessage{(Inkscape) Color is used for the text in Inkscape, but the package 'color.sty' is not loaded}%
    \renewcommand\color[2][]{}%
  }%
  \providecommand\transparent[1]{%
    \errmessage{(Inkscape) Transparency is used (non-zero) for the text in Inkscape, but the package 'transparent.sty' is not loaded}%
    \renewcommand\transparent[1]{}%
  }%
  \providecommand\rotatebox[2]{#2}%
  \newcommand*\fsize{\dimexpr\f@size pt\relax}%
  \newcommand*\lineheight[1]{\fontsize{\fsize}{#1\fsize}\selectfont}%
  \ifx\svgwidth\undefined%
    \setlength{\unitlength}{335.61789868bp}%
    \ifx\svgscale\undefined%
      \relax%
    \else%
      \setlength{\unitlength}{\unitlength * \real{\svgscale}}%
    \fi%
  \else%
    \setlength{\unitlength}{\svgwidth}%
  \fi%
  \global\let\svgwidth\undefined%
  \global\let\svgscale\undefined%
  \makeatother%
  \begin{picture}(1,0.42949182)%
    \lineheight{1}%
    \setlength\tabcolsep{0pt}%
    \put(0,0){\includegraphics[width=\unitlength,page=1]{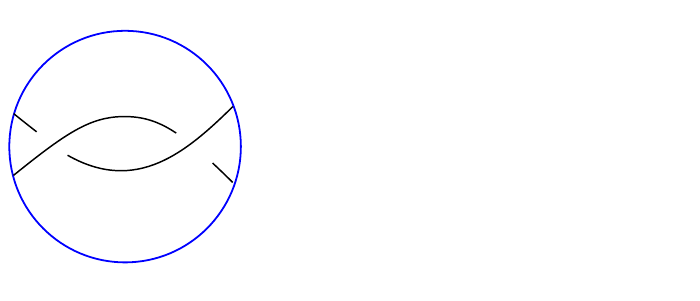}}%
    \put(-0.00114352,0.41169476){\color[rgb]{0,0,0}\makebox(0,0)[lt]{\lineheight{1.25}\smash{\begin{tabular}[t]{l}$(a)$\end{tabular}}}}%
    \put(0.66052039,0.41169534){\color[rgb]{0,0,0}\makebox(0,0)[lt]{\lineheight{1.25}\smash{\begin{tabular}[t]{l}$(b)$\end{tabular}}}}%
    \put(-0.03797239,-0.03){\color[rgb]{0,0,0}\makebox(0,0)[lt]{\lineheight{1.25}\smash{\begin{tabular}[t]{l}a twisted orientation \end{tabular}}}}%
    \put(0.64573158,-0.03){\color[rgb]{0,0,0}\makebox(0,0)[lt]{\lineheight{1.25}\smash{\begin{tabular}[t]{l}a usual orientation \end{tabular}}}}%
    \put(0,0){\includegraphics[width=\unitlength,page=2]{twisted_orientaion.pdf}}%
  \end{picture}%
\endgroup%

	}\]
	\caption{An example of a twisted orientation compared with a usual orientation on the same knot}
	\label{fig:twisted-orientation}
\end{figure}

The next lemma proves these two definitions agree, and when we refer to a twisted orientation on a link $L$ in $\rp{3}$, we mean either of the two, depending on the context.
\begin{lemma}
Suppose $D$ is a link diagram of a link $L$ in $\rp{3}$, such that each component of $L$ is null homologous. Then a twisted orientation on $D$ gives a twisted orientation on $L$, and vice versa. In particular, there are $2^{|L|}$ of twisted orientations on $L$, where $|L|$ is the number of components in $L$.
	\label{lemma:twisted orientation}
\end{lemma}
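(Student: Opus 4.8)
The plan is to realise both notions of twisted orientation inside the double cover $\pi\colon S^{2}\to\rp{2}$, which is the restriction (away from $*$) of $S^{3}\to\rp{3}$: concretely, $\rp{3}\setminus\{*\}\cong\rp{2}\,\widetilde{\times}\,I$ has double cover $S^{2}\times I$, compatibly with the projections to $\rp{2}$ and $S^{2}$. First I would record the geometry of $\pi$. Since $U_{1}$ represents the generator of $\pi_{1}(\rp{2})$, its preimage $\widetilde{U_{1}}:=\pi^{-1}(U_{1})$ is a single circle double covering $U_{1}$, and $S^{2}\setminus\widetilde{U_{1}}$ is a disjoint union of two open disks $H_{+}\sqcup H_{-}$ (``hemispheres'') interchanged by the deck transformation $\tau$. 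The crucial local fact is that a strand of $D$ crossing $U_{1}$ lifts, near that crossing, to a strand of $\widetilde{D}:=\pi^{-1}(D)$ that passes from $H_{+}$ to $H_{-}$ (a disk neighbourhood $W$ of a point of $U_{1}$ has $\pi^{-1}(W)=W_{1}\sqcup W_{2}$ with each $W_{i}$ meeting both $H_{+}$ and $H_{-}$). Lifting the whole picture, $\widetilde{D}$ is a diagram of $\widetilde{L}$ in $S^{2}$; since an orientation of a link and an orientation of its diagram are the same datum, Definition~\ref{def:twisted orientation on links} says that a twisted orientation of $L$ is the same thing as an assignment of arrows to the strands of $\widetilde{D}$ that is coherent along each component and is reversed by $\tau$.

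Next I would set up the bijection. Fix once and for all a labelling of the two hemispheres $H_{\pm}$. Given a twisted orientation $o$ of $D$, define arrows $\widetilde{o}$ on $\widetilde{D}$ by putting the lifted arrows $\pi^{*}o$ on every component of $\widetilde{D}\setminus(\widetilde{U_{1}}\cup\{\text{crossings}\})$ lying in $H_{+}$ and $-\pi^{*}o$ on every such component lying in $H_{-}$. The point to check is that $\widetilde{o}$ is coherent along each component of $\widetilde{L}$; coherence at the crossings of $\widetilde{D}$ is automatic because $\pi$ is a local homeomorphism there and these crossings lie away from $\widetilde{U_{1}}$, so the only issue is at the points of $\widetilde{D}\cap\widetilde{U_{1}}$, and there the arrow is reversed twice---once because $o$ itself is reversed across $U_{1}$ downstairs, and once because the strand of $\widetilde{D}$ crosses from one hemisphere to the other---so the reversals cancel and $\widetilde{o}$ is coherent. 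A one-line computation using $\pi\circ\tau=\pi$ shows that $\tau$ carries $\widetilde{o}$ to its reverse, so $\widetilde{o}$ is a twisted orientation of $L$. Conversely, from a twisted orientation $\widetilde{o}$ of $L$ I push the arrows on the $H_{+}$-part of $\widetilde{D}$ down to $D$ by $\pi_{*}$; the same double-reversal bookkeeping---now using $\tau$-anti-invariance to relate the $H_{+}$- and $H_{-}$-parts of $\widetilde{D}$---shows the resulting arrows on $D$ are reversed across $U_{1}$, i.e. form a twisted orientation of $D$. These two constructions are manifestly mutually inverse. (A different labelling of $H_{\pm}$ replaces this bijection by its composite with a global orientation reversal on $\widetilde{L}$, which does not affect anything else.)

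It remains to count. Working on $D$: on a component $K$, walking around its immersed image in $\rp{2}$ and reversing the chosen direction at each point of $K\cap U_{1}$ propagates a single initial choice of arrow around $K$, and it closes up consistently if and only if $\#(K\cap U_{1})$ is even---which is exactly the hypothesis that $K$ is null homologous, since $\#(K\cap U_{1})\bmod 2=[K]\in H_{1}(\rp{3},\mathbb{Z})$. Hence there are precisely two twisted orientations of $D$ per component of $L$, that is $2^{|L|}$ in all, and the bijection above gives the same count for $L$.

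I expect the main obstacle to be the bookkeeping concentrated in the second paragraph: keeping straight how the strands of $\widetilde{D}$ are distributed between $H_{+}$ and $H_{-}$ (and how they alternate along each component of $\widetilde{L}$), and verifying carefully that ``reversed across $U_{1}$'' downstairs matches ``switches hemisphere'' upstairs, so that the pulled-back-and-twisted arrows are genuinely both coherent and $\tau$-anti-invariant. The remaining ingredients---identifying orientations of a diagram with orientations of its link, and the parity count---are routine and essentially the same as in $S^{3}$.
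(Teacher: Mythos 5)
Your argument is correct and follows essentially the same route as the paper: both identify a twisted orientation on $D$ with a $\tau$-anti-invariant orientation on $\widetilde{L}$ by observing that each passage of a strand across $U_1$ corresponds to switching sheets of the double cover, your hemispheres $H_{\pm}\subset S^{2}$ playing exactly the role of the two tangle copies $T$ and $F(T)$ in the paper's closure picture (and your choice of labelling of $H_{\pm}$ matching the fundamental-domain ambiguity noted in the paper's remark). Your explicit parity count of $\#(K\cap U_{1})$ giving existence and the $2^{|L|}$ statement is a detail the paper leaves implicit, but it is routine and consistent with the paper's argument.
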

\begin{figure}[h]
	\[{
		\fontsize{7pt}{9pt}\selectfont
		\def\svgscale{0.5}
\begingroup%
  \makeatletter%
  \providecommand\color[2][]{%
    \errmessage{(Inkscape) Color is used for the text in Inkscape, but the package 'color.sty' is not loaded}%
    \renewcommand\color[2][]{}%
  }%
  \providecommand\transparent[1]{%
    \errmessage{(Inkscape) Transparency is used (non-zero) for the text in Inkscape, but the package 'transparent.sty' is not loaded}%
    \renewcommand\transparent[1]{}%
  }%
  \providecommand\rotatebox[2]{#2}%
  \newcommand*\fsize{\dimexpr\f@size pt\relax}%
  \newcommand*\lineheight[1]{\fontsize{\fsize}{#1\fsize}\selectfont}%
  \ifx\svgwidth\undefined%
    \setlength{\unitlength}{547.40152212bp}%
    \ifx\svgscale\undefined%
      \relax%
    \else%
      \setlength{\unitlength}{\unitlength * \real{\svgscale}}%
    \fi%
  \else%
    \setlength{\unitlength}{\svgwidth}%
  \fi%
  \global\let\svgwidth\undefined%
  \global\let\svgscale\undefined%
  \makeatother%
  \begin{picture}(1,0.44506295)%
    \lineheight{1}%
    \setlength\tabcolsep{0pt}%
    \put(0,0){\includegraphics[width=\unitlength,page=1]{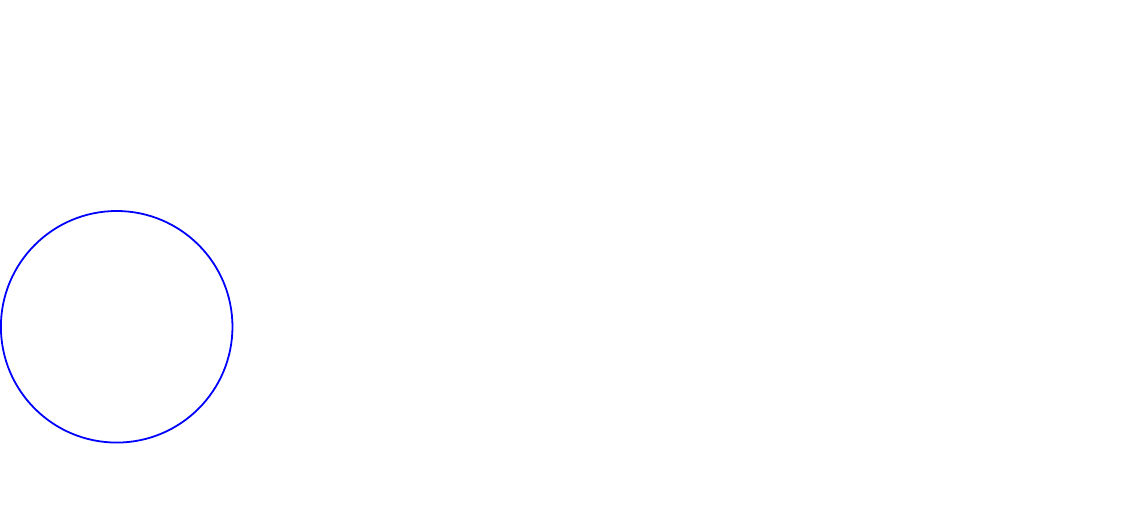}}%
    \put(0.09623242,0.00042603){\color[rgb]{0,0,0}\makebox(0,0)[lt]{\lineheight{1.25}\smash{\begin{tabular}[t]{l}$L$\end{tabular}}}}%
    \put(0,0){\includegraphics[width=\unitlength,page=2]{twisted_orientation_in_double_cover.pdf}}%
    \put(0.58,0.00531516){\color[rgb]{0,0,0}\makebox(0,0)[lt]{\lineheight{1.25}\smash{\begin{tabular}[t]{l}$\widetilde{L} = cl(T\circ F(T))$\end{tabular}}}}%
    \put(0,0){\includegraphics[width=\unitlength,page=3]{twisted_orientation_in_double_cover.pdf}}%
  \end{picture}%
\endgroup%

	}\]
	\caption{A twisted orientation on $L$ and the corresponding orientation on $\widetilde{L}$ which is reversed by the action of $\tau$}
	\label{fig:twisted-orientation_in_double_cover}
\end{figure}
\begin{proof}
	It is easy to see that a knot $K$ in $\rp{3}$ is null homologous if and only if its double cover $\widetilde{K}$ in $S^3$ is a $2$-component link (instead of a knot). Therefore, due to the null homologous assumption, each component of $L$ lifts to a two-component link in $\widetilde{L}$. 
	
	One way to draw a diagram of $\widetilde{L}$ is as follows. View the link diagram $D$ in $\rp{2}$ as an $n$-$n$ tangle, denoted by $T$. Let $F(T)$ be the $n$-$n$ tangle obtained by flyping $T$, i.e. rotating $T$ by $180^{\circ}$ about its middle horizontal axis in the plane. Then $\widetilde{L}$ is the closure of the composition $L\circ F(T)$, where the deck transformation $\tau$ acts by swapping $L$ and $F(T)$. See Figure \ref{fig:twisted-orientation_in_double_cover} for an example. 
	
	Now consider one specific component $K$ in $L$. Every time $K$ hits the essential unknot $U_1$, it travels from one copy of $\rp{2}$ to the other copy of $\rp{2}$ in the lifted picture of $\widetilde{K}$. Therefore, when we restrict to one copy of $\rp{2}$, two adjacent segments (adjacent in the sense that they share common points on $U_1$) must come from different components of the lifted link $\widetilde{L}$ (labeled by red and green in Figure \ref{fig:twisted-orientation_in_double_cover}, respectively). The requirement that the arrow is reversed in the definition of $D$ then becomes the requirement that the deck transformation $\tau$ reverses the orientations on the two components of $\widetilde{K}$.
	 
\end{proof}

\begin{remark}
	If we are given an orientation on the lifted link $\widetilde{L}$ that is reversed by $\tau$, and we present $\widetilde{L}$ as the closure of $T\circ F(T)$, there is an ambiguity in assigning the corresponding twisted orientation on the quotient link $L$, which arises from whether we use $T$ or $F(T)$ as a link diagram for $L$ in $\rp{2}$. After we fix a choice of the fundamental domain of the deck transformation $\tau$ ($i.e.$ choosing $T$ instead of $F(T)$), then there is a canonical one-to-one correspondence between the set of twisted orientations of $L$ and the set of orientations of $\widetilde{L}$ that are reversed by $\tau$.
	
\end{remark}

Now we are going to define elements in the Bar-Natan homology corresponding to the twisted orientations. Later, we will show they give a basis of the Bar-Natan homology, as in the case of Bar-Natan homology in $S^3$.

Recall that we can diagonalize the Bar-Natan Frobenius algebra $V$ using the basis $a = 1+x, b=x$ (remember we always work on field $\mathbb{F}_2$), such that the multiplication and comultiplication becomes:
\begin{align*}
	&\,\,m: V\otimes V \to V \\
	a\otimes a \to a, \,\,\,\, &b \otimes b \to b,\,\,\,\,  a\otimes b\to 0,\,\,\,\, b\otimes a \to 0	\\
	&\,\,\Delta: V\to V\otimes V\\
	&a\to a\otimes a,\,\,\,\, b\to b\otimes b.
\end{align*}
In the usual Bar-Natan homology in $S^3$, for each orientation of the link, one associates an element in the homology by forming the oriented resolutions and then assign either $a$ or $b$ to each unknot component in the resolution, depending on their orientations and distance from infinity. Here we will do similar things. Since a twisted orientation is just some assignment of arrows on each segment in the link diagram, and forming an oriented resolution only cares about the orientations near each crossing, we can form the 'twisted orientation resolution' of a twisted oriented link diagram by doing the oriented resolution at each crossing in the usual sense. Then we assign $a$ or $b$ to each unknot component in the twisted oriented resolution according to some rule. The rule needs some explanation, so let's do it first.

\begin{definition}

Suppose $D$ is a twisted orientation link diagram in $\rp{2}$ of an unlink $U$ in $\rp{3}$ with no crossings, $i.e.$ it is a disjoint union of local unknots. Pick a point $p$ on the essential unknot $U_1$, which is disjoint from the link diagram $D$. The choice of the point $p$ gives a way to view the diagram $D$ as an $n$-$n$ tangle $T$. We form the lifted link $\widetilde{U}$ as the closure of $T\circ F(T)$ as before. As discussed in Lemma \ref{lemma:twisted orientation}, a twisted orientation on $D$ is the same as an orientation on $\widetilde{U}$ which is reversed under $\tau$, in particular each component of $\widetilde{U}$ is oriented.

Define the following $\mathbb{Z}/2$-valued functions on components of $\widetilde{U}$:

Let \[d: \pi_0(\widetilde{U}) \to \mathbb{Z}/2\] be the number of circles in $\widetilde{L}$ separating the chosen component from infinity mod $2$. 

Let \[o:\pi_0(\widetilde{U}) \to \mathbb{Z}/2\] equal $1$ if the component is oriented counterclockwise, and $0$ if it is oriented clockwise. 

Let \[l:\pi_0(\widetilde{U}) \to \mathbb{Z}/2\] be the sum of the above two functions mod $2$: \[l = d+o \text{ mod }2.\]
\label{def:assignment of a and b}
\end{definition}
Now we want to prove that $l$ takes the same value on the two components in $\widetilde{U}$ which is mapped to the same component in $U$, so it descends to a map $l:\pi_0(U)\to \mathbb{Z}/2.$ 

\begin{lemma}
	Assume the same setting as in Definition \ref{def:assignment of a and b}. Suppose $C_1$ and $C_2$ are two components in $\widetilde{U}$ that are sent to the same component $C$ in $U$ under the  quotient map $S^3\to \rp{3}$, then \[l(C_1) = l(C_2).\]
	\label{lem:assignment of a and b}
\end{lemma}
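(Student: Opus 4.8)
The plan is to reduce everything to bookkeeping on the planar diagram of $\widetilde{U}$ and the action of $\tau$ on it. In the setting of Definition \ref{def:assignment of a and b}, $\widetilde{U} = cl(T\circ F(T))$ is a diagram of the lift on the $2$-sphere that double covers the copy of $\rp{2}$ in which $D$ is drawn (the two disks $T$ and $F(T)$ being the two sheets), and $\tau$ acts there as the deck transformation; since $\rp{2}$ is non-orientable this deck transformation is an \emph{orientation-reversing}, fixed-point-free involution of $S^2$ carrying $\widetilde{U}$ to itself. By Lemma \ref{lemma:twisted orientation} each component of $U$, being null homologous, lifts to a two-component sublink of $\widetilde{U}$ interchanged by $\tau$; hence $\tau$ acts on $\pi_0(\widetilde{U})$ without fixed points, and one of its orbits is $\{C_1, C_2\}$. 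Write $Z := C_1$, so $\tau Z = C_2$; fix the basepoint $\infty$ used to define $d$, and set $q := \tau(\infty)$. Since $\tau$ is fixed-point free we have $q \neq \infty$, but $\tau$ interchanges $\infty$ and $q$, and $q \notin \widetilde{U}$ because $\tau(\widetilde{U}) = \widetilde{U}$.

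The first computation is a nesting identity: for every component $Z$,
\[ d(Z) + d(\tau Z) \;\equiv\; \#\bigl\{\, W \in \pi_0(\widetilde{U}) \setminus \{Z\} \ :\ W \text{ separates } \infty \text{ from } q \,\bigr\} \pmod 2. \]
Indeed, ``$Z$ lies in the bounded region of $W$'' means ``$W$ separates $Z$ from $\infty$'', so $d(Z) = \#\{W \neq Z : W \text{ separates } Z \text{ from } \infty\}$; applying the homeomorphism $\tau$ — which carries the $\infty$-side of $\tau W$ onto the $q$-side of $W$ — transforms the defining condition for $d(\tau Z)$ into $d(\tau Z) = \#\{W \neq Z : W \text{ separates } Z \text{ from } q\}$. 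Adding these summand by summand over $W$: if $W$ does not separate $\infty$ from $q$, then $W$ separates $Z$ from $\infty$ exactly when it separates $Z$ from $q$, so the two indicators agree and $W$ contributes $0$ modulo $2$; if $W$ does separate $\infty$ from $q$, then $Z$ lies on the side of exactly one of the two points, so $W$ contributes $1$. This yields the display.

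The second computation handles orientations, and here is where ``twisted'' is used: because $\tau$ reverses the orientation of $S^2$ while the twisted orientation $\mathfrak{o}$ satisfies $\tau_*\mathfrak{o} = \overline{\mathfrak{o}}$, the orientation of $C_2$ equals $\overline{\tau_*(\mathfrak{o}|_{C_1})}$, and chasing the convention ``counterclockwise $=$ the $\infty$-region on the right'' through $\tau$ (which carries the $\infty$-region of $Z$ onto the $q$-region of $\tau Z$) yields
\[ o(Z) + o(\tau Z) \;\equiv\; \bigl[\, \tau Z \text{ separates } \infty \text{ from } q \,\bigr] \pmod 2. \]
Adding this to the nesting identity, the term $[\tau Z \text{ separates } \infty \text{ from } q]$ cancels against the $W = \tau Z$ summand appearing in the right-hand count, leaving
\[ l(C_1) + l(C_2) \;\equiv\; \#\bigl\{\, W \in \pi_0(\widetilde{U}) \setminus \{C_1, C_2\} \ :\ W \text{ separates } \infty \text{ from } q \,\bigr\} \pmod 2. \]
Finally, since $\tau$ is a homeomorphism interchanging $\infty$ and $q$, a component $W$ separates $\infty$ from $q$ if and only if $\tau W$ does; and the components of $\widetilde{U}$ other than $C_1, C_2$ are partitioned into $\tau$-orbits of size $2$, so the count on the right is even. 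Hence $l(C_1) = l(C_2)$.

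I expect the main obstacle to be the two transformation formulas for $d$ and for $o$: carrying out the ``which component/which side'' bookkeeping for the orientation-reversing involution $\tau$ without sign errors, keeping firmly in mind that ``bounded region'' and ``counterclockwise'' are defined relative to $\infty$ whereas $\tau(\infty) = q$ is a different point. Once those two formulas are in place, the cancellation and the orbit-pairing argument are immediate, and — as usual over $\mathbb{F}_2$ — no signs intervene.
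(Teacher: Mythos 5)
Your proof is correct, but it follows a genuinely different route from the paper's. The paper first normalizes the picture — using $R$-$V$ moves to arrange that the component $C$ meets the essential unknot $U_1$ in exactly two points $\alpha,\beta$ — and then splits into two cases according to the position of the basepoint $p$ relative to $C$, computing $d$ and $o$ separately in each case by counting intersections of $\widetilde{U}$ with the arcs $p\alpha$, $\beta p$, $\alpha\beta$ of $U_1$ and using the null-homology and no-crossing hypotheses to control those parities. You instead argue globally and equivariantly on the diagram sphere: you use only that the deck transformation $\tau$ of $S^2\to\rp{2}$ is a free, orientation-reversing involution preserving $\widetilde{U}$ and sending $\infty$ to a point $q\notin\widetilde{U}$ (a lift of $p$), prove the basepoint-change identity for $d$ and the orientation identity $o(Z)+o(\tau Z)\equiv[\tau Z \text{ separates } \infty \text{ from } q]$, and finish by a cancellation plus a $\tau$-orbit parity count. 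This buys several things: no preliminary Reidemeister reduction, no case split, a uniform treatment of arbitrary crossingless diagrams, and — most usefully — it isolates exactly where the twisted-orientation hypothesis enters (the ambient orientation reversal of $\tau$ cancels against the reversal of the link orientation; for a $\tau$-invariant orientation the identity would pick up an extra $1$ and the lemma would fail, consistent with the paper's case analysis). What the paper's proof buys is that it stays entirely within the planar conventions used elsewhere (arcs of $U_1$ and intersection parities) and never needs to invoke the sphere-level description of $\tau$. Two small points you should make explicit if this were written up: the identification of $\tau$ with an orientation-reversing free involution of the diagram sphere carrying $\infty$ to $q$ deserves a sentence (it holds because $S^2\to\rp{2}$ is the orientation double cover and $\infty$ is chosen as a lift of $p\notin D$, so $q$ is the other lift), and the orientation-transformation formula, which you state rather tersely, should be derived by the three-step comparison (reverse the curve orientation, change ambient orientation under $\tau$, change basepoint from $q$ to $\infty$); both check out, so these are presentational rather than mathematical gaps.
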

\begin{proof}
	By applying Reidemeister moves $R$-$V$ if necessary, we can assume the local unknot $C$ intersects the essential unknot $U_1$ at exactly two points $\alpha$ and $\beta$. Then there are two possibilities according to the relative positions between $C$ and the point $p$, as shown in Figure \ref{fig:labeling}. Later, when we refer to an arc in the proof, for example arc $p\alpha$, we mean the open arc of $U_1$ traveling counterclockwise from $p$ to $\alpha$.
	
	\begin{figure}[h]
		\[{
			\fontsize{7pt}{9pt}\selectfont
			\def\svgscale{0.5}
			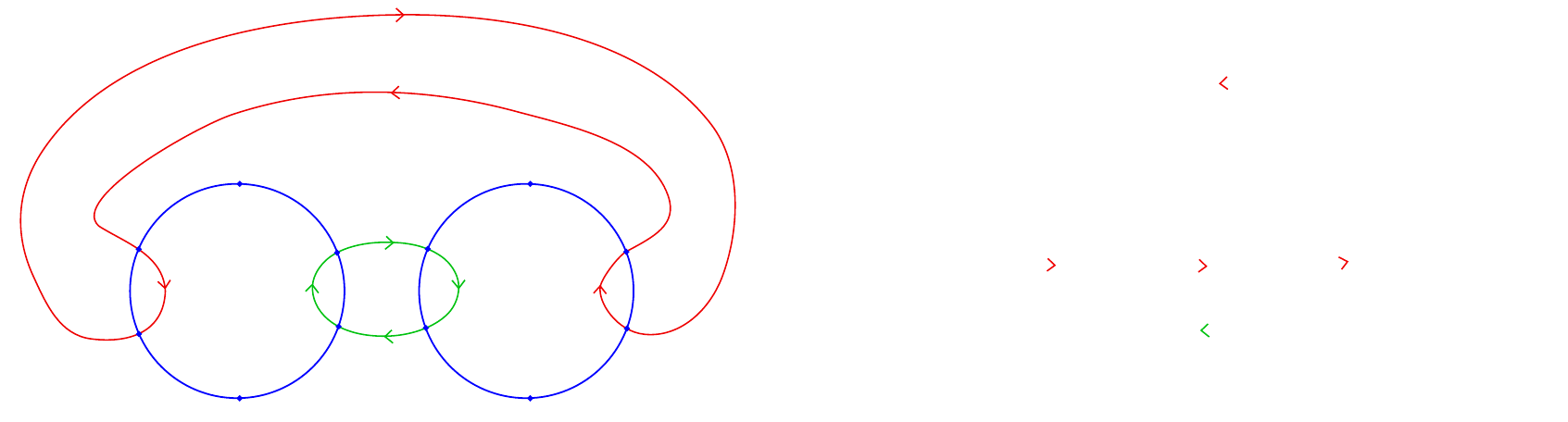
		}\]
		\caption{Two possibilities of the relative positions between $C_1, C_2$ and $p$}
		\label{fig:labeling}
	\end{figure}
	
	\begin{enumerate}
		\item In this case, the two components $C_1$ and $C_2$ are oriented in the same direction, so \[o(C_1) = o(C_2). \]
		
		The distance to infinity function $d(C_1)$ could be also be described by counting the number of intersections between $\widetilde{U}$ and the arc $p\alpha$ mod $2$, and similarly, $d(C_2)$ counts the number of intersections between $\widetilde{U}$ and the arc $\beta p$ mod $2$. By the assumption that $D$ is the diagram of an unlink, it is null homologous in particular, so the total number of intersection between $\widetilde{U}$ and the semicircle $pp$ is even. Also the link diagram $D$ has no crossing, and the circle $C_1$ bounds a disk, so the number of intersection between $\widetilde{U}$ and the arc $\alpha\beta$ should also be even. Therefore, we conclude that \[\#|\widetilde{U}\cap p\alpha|+\#|\widetilde{U}\cap \beta p| = 0 \text{ mod }2 \text{ as well},\]
	so \[d(C_1) = \#|\widetilde{U}\cap p\alpha| =\#|\widetilde{U}\cap \beta p| = d(C_2) \text{ mod }2.\]
	Then we have \[l(C_1) = l(C_2)\text{ mod }2\] in this case.
		\item This time, the orientations on $C_1$ and $C_2$ are opposite to each other, so \[o(C_1) = o(C_2) +1 \text{ mod }2.\]
		
		Again, the distance function could be described in terms of the number of intersections between $\widetilde{U}$ and various arcs. In this case, \[d(C_1) = \# |\widetilde{U}\cap p\alpha|,\,\,\,\, d(C_2) = \#|\widetilde{U} \cap p\beta| = \#|\widetilde{U} \cap p\alpha| + 1 + \#|\widetilde{U} \cap \alpha\beta|\]
		
		By the assumptions on $D$ that $D$ has no crossings and each component of $D$ is null homologous, we conclude that  \[\#|\widetilde{U} \cap \alpha\beta| =0 \text{ mod }2,\] 
		so \[d(C_1 ) = d(C_2) +1 \text{ mod }2.\]
		Therefore, we also have \[l(C_1) = l(C_2) \text{ mod }2 \] in this case. 
	\end{enumerate}
\end{proof}
With the help of the labeling function $l: \pi_0(L) \to \mathbb{Z}/2$, we can define our canonical generators associated with twisted orientations. 

\begin{definition}
Suppose $D$ is a link diagram of a null homologous link $L$ in $\rp{3}$ with a twisted orientation $o$. Pick a point $p$ on the essential unknot $U_1$ which is disjoint from the link diagram. Then we define the \textbf{canonical element $s_o$ associated with the twisted orientation $o$} in $\mathit{CBN}^{un}(D)$ as follows: first, form the twisted oriented resolution $D_o$ according to $o$, then apply the labeling function $l$ to each component of the unlink $D_o$. Finally, assign the element $a = 1+x$ to a component $C$ in $D_o$ if $l(C) = 0$, and $b =x $ to it if $l(C) =1$. 
\label{def:canonical generator}
\end{definition}

\begin{remark}
	This definition depends on the choice of $p$ in the following way: a different choice of $p$ might lead to a uniform change of $l(C)$ to $l(C)+1$ for each component $C$ in $D_o$, resulting in a switch of labels $a$ and $b$. The point $p$ plays the role of the point of infinity for the labeling function of link diagrams in $\mathbb{R}^2$.
\end{remark}

As in the case of $S^3$, these canonical generators associated with the twisted orientations give a basis for the Bar-Natan homology. First, we will show they actually lie in $\mathit{HBN}^{un} (L)$, and then we will prove that the dimensions match using an induction argument. The proof closely follows the original ones in \cite{MR2173845} for Lee homology. See also \cite{MR2286127} for a treatment of Bar-Natan homology (which is essentially the same as the one for Lee homology). The main difference in our case is that we need to take into account the $1$-$1$ bifurcation and whether there is a homologically essential component in the link. There issues are solved by the notion of twisted orientation. 

\begin{proposition}
	For a null homologous link diagram $D$ with a twisted orientation $o$, the canonical element $s_o$ represents an element in the homology $\mathit{HBN}^{un} (D)$.
	\label{prop:canonical generator}
\end{proposition}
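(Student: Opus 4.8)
The plan is to prove that $s_o$ is a cycle, i.e. $\partial s_o=0$; by Definition \ref{def:unadjusted chain complex} that is all that is needed for $s_o$ to represent a class in $\mathit{HBN}^{un}(D)$. The argument is modelled on the classical proof that the Lee/Bar--Natan generator attached to an orientation of a link in $S^3$ is a cycle (\cite{MR2173845}, \cite{MR2286127}); the one feature with no $S^3$ analogue --- the $1$-$1$ bifurcation --- will be handled by passing to the double cover and invoking Lemma \ref{lem:assignment of a and b}.

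First the reduction. By Definition \ref{def:canonical generator}, $s_o$ lives in the single summand $C(D_{v_o})$ corresponding to the vertex $v_o\in\underline{2}^n$ of the twisted oriented resolution $D_o$, and in the diagonal basis $a=1+x,\ b=x$ of Definition \ref{def:frobenius algebra} it is the basis tensor labelling each circle $C$ of $D_o$ by $a$ or $b$ according to $l(C)$. Since $\partial$ raises homological degree by $1$, $\partial s_o=\sum_{c\,:\,v_o(c)=0}\partial_{e_c}(s_o)$, and the summands $\partial_{e_c}(s_o)$ lie in pairwise distinct direct summands of $\mathit{CBN}^{un}_{|v_o|+1,*}$; hence it suffices to prove $\partial_{e_c}(s_o)=0$ for every crossing $c$ with $v_o(c)=0$. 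In the diagonal basis one has $m(a\otimes a)=a$, $m(b\otimes b)=b$, $m(a\otimes b)=m(b\otimes a)=0$, $\Delta(a)=a\otimes a$, $\Delta(b)=b\otimes b$, and $f=id_V$ fixes both $a$ and $b$. Thus among the three possible edge maps only $m$ can annihilate a diagonal basis tensor, and only when fed a mismatched pair $a\otimes b$ or $b\otimes a$. Consequently the whole claim comes down to showing: for each $c$ with $v_o(c)=0$, the bifurcation along $e_c$ is of type $2$-$1$ and the two circles of $D_o$ it merges carry different $l$-values.

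To see this, lift everything to the double cover. By Lemma \ref{lemma:twisted orientation}, $o$ is the same data as an orientation $\widetilde o$ on the double cover $\widetilde D\subset S^2$ of $D$ which is reversed by the deck involution $\tau$. Lifting smoothings, $D_o$ lifts to the ordinary oriented resolution $\widetilde D_{\widetilde o}$ of $\widetilde D$, every circle of $D_o$ lifts to two circles of $\widetilde D_{\widetilde o}$ interchanged by $\tau$, and $c$ lifts to a pair $c_1,\,c_2=\tau(c_1)$. Since $\tau$ reverses both the orientation of $S^2$ and the orientation $\widetilde o$, the crossings $c_1,c_2$ have opposite signs; relabel so $c_1$ is positive. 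When $v_o(c)=0$, the edge $e_c$ lifts to flipping $c_1$ and $c_2$ simultaneously, and at $c_1$ this is exactly a forward differential edge of the ordinary Bar--Natan cube of $\widetilde D$, namely the one flipping the positive crossing $c_1$ out of the oriented resolution. By the classical $S^3$ analysis, flipping any crossing out of the oriented resolution of a planar diagram \emph{merges two distinct circles} (it is never a split), and at a positive crossing the two merged circles have different $l$-values. Let $\widetilde A,\widetilde B$ be these two circles, so $\ell_1\in\widetilde A$ and $r_1\in\widetilde B$, where $\ell,r$ are the two local arcs of $c$ in $D_o$ and $\ell_1,r_1$ are their lifts through $c_1$; then $\widetilde A$ is a lift of the circle $A$ of $D_o$ containing $\ell$, and $\widetilde B$ is a lift of the circle $B$ containing $r$. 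By Lemma \ref{lem:assignment of a and b}, $l$ is constant on the two lifts of a given circle of $D_o$, so $l(A)=l(\widetilde A)\neq l(\widetilde B)=l(B)$. In particular $A\neq B$, so $e_c$ is a $2$-$1$ bifurcation merging $A$ and $B$, and $\partial_{e_c}(s_o)$ applies $m$ to the mismatched pair $\{a,b\}$ and is therefore $0$. Since this holds for all $c$ with $v_o(c)=0$, we conclude $\partial s_o=0$, so $s_o$ represents a class in $\mathit{HBN}^{un}(D)$.

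The step I expect to carry all the weight is precisely "$e_c$ is a $2$-$1$ bifurcation." A priori $e_c$ could be a $1$-$2$ or a $1$-$1$ bifurcation (the latter having no counterpart in $S^3$), and because $f=id_V$ a $1$-$1$ bifurcation would give $\partial_{e_c}(s_o)\neq 0$ --- this is exactly the pathology that the notion of twisted orientation is introduced to remove. In the argument above it is removed with no extra work: if $e_c$ were a $1$-$2$ or $1$-$1$ bifurcation, then $\ell$ and $r$ would lie on a single circle $C$ of $D_o$, so $\widetilde A$ and $\widetilde B$ would be the two lifts of $C$, which carry equal $l$-values by Lemma \ref{lem:assignment of a and b} --- contradicting $l(\widetilde A)\neq l(\widetilde B)$. (One can also argue directly in $\rp{2}$: a $1$-$1$ bifurcation forces one of the two arcs of $C$ outside the crossing disk to cross $U_1$ an odd number of times, the complement of that disk in $\rp{2}$ being a M\"obius band, and this is incompatible with the arrows of a twisted orientation on $C$ flipping across $U_1$; but the double-cover route is shorter and also supplies the label comparison needed in the $2$-$1$ case.) The one remaining point to verify carefully is that under the orientation-reversing cover $S^2\to\rp{2}$ the edge $e_c$ really does restrict, at the positive lift $c_1$, to a forward differential edge of the $\widetilde D$-cube; this is a routine check against the smoothing conventions of Figure \ref{fig:smoothing}.
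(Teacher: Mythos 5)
Your proof is correct, and it reaches the conclusion by a genuinely different route from the paper's at the key step. The paper works entirely downstairs in $\rp{2}$: it rules out the bad bifurcations by a case analysis on how the two arcs at a crossing could close up into one circle of the twisted oriented resolution (adjacency $\alpha$--$\gamma$ versus $\alpha$--$\delta$, together with the parity of intersections with $U_1$), showing any such closure is incompatible with the twisted orientation, and then invokes the usual label argument to see that the merge kills $a\otimes b$. You instead lift everything to the double cover, observe that the twisted oriented resolution lifts to the ordinary oriented resolution of $\widetilde D$, quote the classical planar facts (the two arcs at any crossing of an oriented resolution lie on distinct circles carrying different Lee labels), and then descend via Lemma \ref{lem:assignment of a and b}; a nice byproduct of your route is that "different labels upstairs" simultaneously forces "distinct circles downstairs" and the label mismatch, so the $1$-$1$ and $1$-$2$ cases are excluded with no separate analysis. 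What the paper's direct argument buys is independence from the lifting bookkeeping (your flagged "routine check" that $0$/$1$-smoothings and forward cube edges are preserved under the covering), which your approach must verify; what your approach buys is that the only genuinely new verification is Lemma \ref{lem:assignment of a and b}, everything else being the literal $S^3$ statement. Note also that you only prove $\partial s_o=0$, which is exactly what the proposition asserts; the paper proves the stronger statement $s_o\in\ker(d)\cap\ker(d^*)$, which it needs later to get linear independence of the canonical classes.

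One side remark in your write-up is wrong, though harmlessly so: $c_1$ and $c_2=\tau(c_1)$ do \emph{not} have opposite signs. The deck transformation acts on $S^2\times I$ by the antipodal map on $S^2$ \emph{and} a flip of the $I$-direction, so besides reversing the orientation of $S^2$ and of $\widetilde L$ it also exchanges over- and under-strands; the three effects combine to preserve crossing signs (equivalently, $\tau$ is orientation-preserving on $S^3$ and reversing all strand orientations does not change signs). In fact your own setup forces this: both $c_1$ and $c_2$ lift the same $0$-smoothing of $c$, and the oriented resolution of $\widetilde D$ sits at the $0$-smoothing exactly at positive crossings, so for $v_o(c)=0$ both lifts are positive. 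Since the classical facts you invoke hold at every crossing of an oriented resolution regardless of sign, nothing in your argument depends on this relabeling, but the parenthetical claim should be corrected.
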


\begin{proof}
	Following Lee's argument, we want to show that $s_o$ lies in both $ker(d)$ and $ker(d^*)$, where $d^*$ is the adjoint differential defined using the inner product on the Frobenius algebra $V$. 
		\begin{figure}[h]
		\[{
			\fontsize{7pt}{9pt}\selectfont
			\def\svgscale{0.6}
			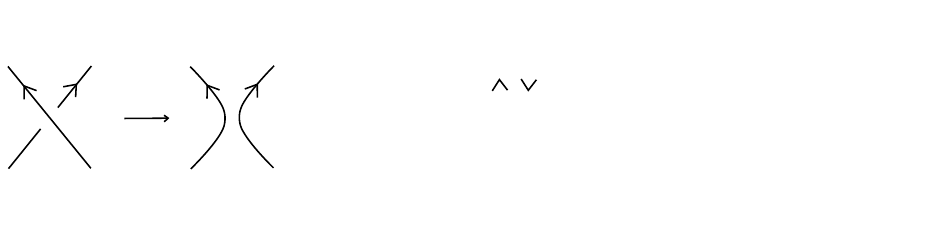
		}\]
		\caption{ Two arcs sharing a crossing in the twisted oriented resolution won't belong to the same circle }
		\label{fig:local_orientation}
	\end{figure}
	
	Let's look more carefully at what happens near a crossing. Without loss of generality, suppose the local picture near a crossing looks like the one in Figure \ref{fig:local_orientation}. As in the usual proof for links in $S^3$, we want to show the two arcs $\alpha \beta$ and $\gamma \delta$ won't belong to the same circle in the resolution. In the usual proof, this possibility is ruled out because if these two arcs belong to the same circle, then the endpoint $\alpha$ is adjacent to $\gamma$ and $\beta$ is adjacent to $\delta$ in the circle due to the absence of crossings in the resolution. However, this arrangement is incompatible with the orientation on the arc $\alpha\beta$ and $\gamma\delta$. 
	
	In our case, the situation is slightly different because the projection lies in $\rp{2}$ instead of $\mathbb{R}^2$. We again prove by contradiction that the arcs $\alpha\beta $ and $\gamma\delta$ won't belong to the same circle in the twisted oriented resolution. Suppose they were, then there are two possibilities:
	
\begin{enumerate}
	\item  If the endpoint $\alpha$ is adjacent to $\gamma$ on the circle, then the arc $\alpha \gamma$ of the circle intersects the essential unknot $U_1$ an even number of times. Therefore, the twisted orientation on the circle will be from $\beta$ to $\alpha$ and from $\gamma$ to $\delta$, or from $\alpha$ to $\beta$ and from $\delta$ to $\gamma$, which is incompatible with the local orientation near the crossing.
	\item If the endpoint $\alpha$ is adjacent to $\delta$ on the circle, then the arc $\alpha \delta$ of the circle intersects the essential unknot $U_1$ an odd number of times. Therefore, the twisted orientation on the circle will be again from $\beta$ to $\alpha$ and from $\gamma$ to $\delta$, or from $\alpha$ to $\beta$ and from $\delta$ to $\gamma$, as the arrow is switched an odd number of times in the twisted orientation. But this is again incompatible with the local orientation near the crossing.
\end{enumerate}

Therefore, we conclude that the arcs $\alpha\beta$ and $\gamma\delta$ near a crossing won't belong to the same circle in the twisted oriented resolution. This implies if we change the smoothing at crossing in the twisted orientation resolution, it won't be a $1$-$1$ bifurcation or a $1$-$2$ bifurcation, which is exactly what we want to avoid, as the corresponding maps are non-trivial. The rest of the proof is exactly the same as in the usual case for links in $S^3$, as the rule we used to assign $a$ and $b$ to each circle in the resolution guarantees that the label assigned to the circle containing the arc $\alpha\beta$ is different from that assigned to the circle containing the arc $\gamma \delta $ in the resolution. Therefore, $s_o$ lies in $ker(d) \cap ker(d^*)$.
	
\end{proof}

Now we prove the dimension of $\mathit{HBN}^{un}(D)$ using an induction argument on the number of crossings. There is some adjustment we need to make compared to the original one in \cite{MR2173845}, as we need to discuss whether there is a homologically essential component in the link or not.

\begin{proposition}
	Suppose $L$ is a null homologous link in $\rp{3}$ with a link diagram $D$ in $\rp{2}$. Then 
	\begin{equation*}
		dim(\mathit{HBN}^{un}(D)) =
		\begin{cases}
			0 & \text{if $L$ has some component that is non-zero in $H_1(\rp{3},\mathbb{Z})$;}\\
			2^{|L|} & \text{otherwise, $i.e.,$ all components of $L$ are null homologous.}\\
		\end{cases}       
	\end{equation*}

What's more, a basis of $\mathit{HBN}_{un}(D)$ is given by \[\left\{s_o \mid o \text{ is a twisted orientation of } L\right\}.\]
\end{proposition}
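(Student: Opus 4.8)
The plan is to follow the strategy of Lee \cite{MR2173845}, adapted to the $\rp{3}$ setting via twisted orientations. The key structural fact is that over $\f_2$, with the diagonalized basis $a = 1+x$, $b = x$, the Bar-Natan Frobenius algebra splits as a direct sum of two copies of $\f_2$ (idempotents $a$ and $b$), with $m$ and $\Delta$ diagonal. Crucially, in this basis the $1$-$1$ bifurcation map $f = id_V$ also becomes diagonal: $a \mapsto a$, $b \mapsto b$. So once we twist a resolution by a twisted orientation, \emph{all three} edge maps act on the distinguished generators by simply transporting labels, which is exactly the feature that makes the $1$-$1$ bifurcation ``behave like'' the $1$-$2$ and $2$-$1$ bifurcations.

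First I would set up the induction on the number of crossings $n$ of $D$. For the base case $n = 0$, $D$ is a disjoint union of local unknots (there are no homologically essential components possible since each component is a local unknot), so $\mathit{CBN}^{un}(D) = V^{\otimes |L|}$ with zero differential; its homology has dimension $2^{|L|}$, and the $2^{|L|}$ distinguished generators $s_o$ — one for each of the $2^{|L|}$ twisted orientations of $L$, by Lemma \ref{lemma:twisted orientation} — are obtained by labelling each component $a$ or $b$ according to the function $l$ of Definition \ref{def:assignment of a and b}; these are manifestly a basis of $\{a,b\}^{\otimes |L|}$. For the inductive step, pick a crossing $c$ of $D$; the complex $\mathit{CBN}^{un}(D)$ is (up to grading shifts) the mapping cone of the edge map $\partial_c : \mathit{CBN}^{un}(D_0) \to \mathit{CBN}^{un}(D_1)$, where $D_0, D_1$ are the $0$- and $1$-smoothings at $c$, which have $n-1$ crossings. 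One then analyzes three cases according to whether resolving $c$ to either $D_0$ or $D_1$ produces a homologically essential component, or changes the number of null homologous components. Since $[L] = 0$ does not change under smoothing and each $D_i$ is again a null homologous diagram, but individual \emph{components} may become homologically essential, the delicate bookkeeping is exactly how $|L|$, and the count of essential components, behaves for $D$, $D_0$, $D_1$.

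The heart of the argument is the long exact sequence of the mapping cone,
\[
\cdots \to \mathit{HBN}^{un}(D_0) \xrightarrow{(\partial_c)_*} \mathit{HBN}^{un}(D_1) \to \mathit{HBN}^{un}(D) \to \cdots,
\]
combined with the diagonalization. In the diagonalized basis the map $\partial_c$ (whether $m$, $\Delta$, or $f$) is a map between direct sums of $1$-dimensional pieces indexed by ways to label the circles of $D_0$ (resp.\ $D_1$) by $\{a,b\}$. Following Lee, I would show that $(\partial_c)_*$ is injective with cokernel of the right dimension: if $D_0$ and $D_1$ have the same number of null homologous components and no essential ones, one checks the rank of $\partial_c$ directly on labelled-circle generators (the twisted oriented resolutions of $D$ match up in pairs differing by the smoothing at $c$, exactly as in the $S^3$ case); if a resolution acquires an essential component its homology vanishes by the inductive hypothesis, and the exact sequence forces $\dim \mathit{HBN}^{un}(D) = \dim$ of the other term, again matching $2^{|L|}$ or $0$ as appropriate. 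Simultaneously one tracks the generators: a twisted orientation $o$ of $D$ restricts to a twisted orientation of either $D_0$ or $D_1$ (whichever is the $o$-oriented resolution at $c$), and since $\partial_c$ acts diagonally and Proposition \ref{prop:canonical generator} guarantees $s_o \in \ker(d)\cap\ker(d^*)$, the class $[s_o]$ maps to $[s_{o|}]$ under the connecting/inclusion maps; counting shows the $[s_o]$ are linearly independent, hence a basis by the dimension count.

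The main obstacle I anticipate is the case analysis when smoothing $c$ turns a null homologous component into two essential ones (a $1$-$2$ bifurcation splitting a null homologous circle crossing $U_1$ an even number of times into two circles each crossing an odd number of times) or merges two essential components into a null homologous one — these are precisely the configurations that cannot occur in $S^3$ and that Lee's original argument never has to confront. Here I would lean on the observation already used in the proof of Proposition \ref{prop:canonical generator}: the parity of intersections with $U_1$ is additive under the bifurcations, so one of $D_0, D_1$ always has a vanishing homology in these cases, which collapses the long exact sequence; careful verification that the dimensions $2^{|L|}$ versus $0$ and the generator count are consistent across all such transitions is the technical crux, but it is a finite check of parities entirely parallel to the $S^3$ argument once the diagonalization and the twisted-orientation formalism are in place.
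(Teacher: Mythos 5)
Your overall strategy---induction on the number of crossings via the long exact sequence of the mapping cone at a chosen crossing, with the diagonalized basis $a=1+x$, $b=x$ and canonical generators tracked through the edge maps---is the same as the paper's, and your handling of the cases where the resolutions have no essential components matches the paper's argument. The genuine gap is in the vanishing half of the statement. You claim that in the ``new'' configurations (splitting a null homologous circle into two essential ones, or merging two essential components into a null homologous one) one of $D_0$, $D_1$ always has vanishing homology, so the long exact sequence collapses. That is false for the merging configuration, and it is precisely the configuration you cannot avoid: if $L$ consists of exactly two homologically essential components, they must cross (two essential curves in $\rp{2}$ have $\mathbb{Z}/2$-intersection number $1$), every crossing may well be such an inter-component crossing, and at such a crossing \emph{both} smoothings merge the two essential components into a single null homologous one, so by induction $\mathit{HBN}^{un}(D_0)$ and $\mathit{HBN}^{un}(D_1)$ are both nonzero (each of dimension $2^{|L|-1}$). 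More fundamentally, the conclusion you need here is $\mathit{HBN}^{un}(D)=0$, which can never follow from the vanishing of only one term of the long exact sequence: one-sided vanishing forces $\dim \mathit{HBN}^{un}(D)$ to equal the dimension of the surviving term, which is nonzero. You need either both terms to vanish (true when the chosen crossing does not involve the essential components, but no such crossing need exist) or the edge map to induce an isomorphism on homology.

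The missing step is exactly the paper's case of a crossing shared by two essential components: there the relevant arcs meet $U_1$ an odd number of times, so the twisted orientations on $D_0$ are \emph{compatible} with those on $D_1$ under the change of smoothing; hence the edge map carries the canonical generators of $\mathit{HBN}^{un}(D_0)$ to those of $\mathit{HBN}^{un}(D_1)$, which by induction form bases, so $(\partial_c)_*$ is an isomorphism and the cone is acyclic, giving $\mathit{HBN}^{un}(D)=0$. (Contrast this with a crossing between two null homologous components, where the arcs meet $U_1$ evenly, the orientations on $D_0$ and $D_1$ are incompatible, and the induced map is $0$, giving dimension $2^{|L|}$.) Once this case is added, your induction can be made to work; note that the paper also organizes the induction by first treating knots and two-component links and then reducing general links to these via the K\"unneth formula or a crossing shared by two distinct components, which keeps the bookkeeping of which components become essential under smoothing manageable.
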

\begin{remark}
	If $L$ has some component which is non-zero in $H_1(\rp{3},\mathbb{Z})$, then there is no twisted orientation on $L$, so the set of basis given by twisted orientations is empty, and $\mathit{HBN}^{un} (L)$ is $0$-dimensional in this case.
\end{remark}

\begin{proof}
	 Following Lee's original proof, we first prove it for knots and $2$-component links by induction on the number of crossings. It clearly holds for unknot and unlink with $2$ local unknot components. 
	 		\begin{figure}[h]
	 	\[{
	 		\fontsize{7pt}{9pt}\selectfont
	 		\def\svgscale{0.6}
	 		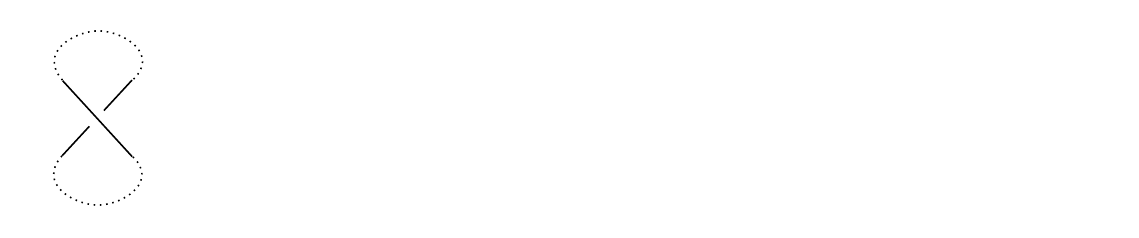
	 	}\]
	 	\caption{ Resolve a null homologous knot/$2$-component link at a crossing}
	 	\label{fig:induction_1}
	 \end{figure}
 
	 Now let $D$ be a link diagram of a null homologous knot with $n$ crossings. Pick one crossing, and let $D_0$, $D_1$ denote the $0$- and $1$-smoothing of $D$ at this crossing respectively. Refer to $(a)$ in Figure \ref{fig:induction_1} as a local model near the crossing. Suppose, without loss of generality, that the endpoint $\alpha$ is adjacent to the endpoint $\gamma$ on the knot $K$. (The case $\alpha $ is adjacent to the endpoint $\beta$ is the same, by changing the over-crossing to an under-crossing, which switches $D_0$ and $D_1$ but doesn't change the argument otherwise. The endpoint $\alpha$ won't be adjacent to $\delta$, as that will give a $2$-component link instead of a knot.) Then we divide into two cases depending on how many times the arc $\alpha\gamma $ of $K$ intersects the essential unknot $U_1$.
	 \begin{enumerate}
	 	\item If the segment $\alpha \gamma$ intersects $U_1$ an odd number of times, so does the segment $\beta \delta$, as we assume $K$ is null homologous. Then the link represented by $D_0$ consists of two homologically essential components, so by the induction hypothesis, it has trivial Bar-Natan homology. The diagram $D_1$ represents a null homologous knot with one less crossing than $D$, so by the induction hypothesis, we have \[dim(\mathit{HBN}^{un} (D_1))=2.\] Using the long exact sequence relating $\mathit{HBN}^{un} (D),\mathit{HBN}^{un} (D_0),\mathit{HBN}^{un} (D_1)$, we conclude that 
	 	\[dim(\mathit{HBN}^{un} (D))=2.\]
	 	\item If the segment $\alpha\gamma$ intersects $U_1$ an even number of times, so does the segment $\beta\delta$. Then $D_0$ consists of $2$ null homologous components and $D_1$ is a null homologous knot, so by the induction hypothesis, \[dim(\mathit{HBN}^{un} (D_0)=4, \,\,\, dim (\mathit{HBN}^{un} (D_1))=2.\] As in the usual case, out of the four twisted orientations on $D_0$, there are two which are compatible under the change of smoothings to twisted orientations on $D_1$, and the map in the long exact sequence will send these two twisted oriented generators in $  \mathit{HBN}^{un} (D_0)$ to the corresponding ones in $\mathit{HBN}^{un} (D_1)$. Therefore, \[2\leq dim(\mathit{HBN}^{un} (D))\leq dim(\mathit{HBN}^{un} (D_0))+dim(\mathit{HBN}^{un} (D_1))-4=2.\]
	 \end{enumerate}
 
 Now suppose $D$ is a diagram of a null homologous link $L$ of two components, $K_0$ and $K_1$. Suppose first that there are no crossings between the two components in the link diagram $D$. Then $K_0$, $K_1$ both need to be null homologous (otherwise, they are both homologically essential, and there has to be at least one crossing between these two components), and we can apply the K\"unneth formula to conclude \[dim(\mathit{HBN}^{un} (D))=4.\]
 
 Suppose the two components share at least one crossing. Pick any crossing shared by them and form the $0$- and $1$-smoothing, $D_0$ and $D_1$, respectively. Refer to $(b)$ in Figure \ref{fig:induction_1} as a local model. We again divide into two cases depending on whether $K_0$ and $K_1$ are null homologous or not.
 \begin{enumerate}
 	
 	\item If $K_0,K_1$ are both null homologous, then each of the segments $\alpha\delta$ and $\beta\gamma$ intersects the essential unknot $U_1$ an even number of times, and the two twisted orientations on $D_0$ are incompatible with the two twisted orientations on $D_1$ under changing the smoothing. So the map from $\mathit{HBN}^{un} (D_0)$ to $\mathit{HBN}^{un} (D_1)$ is $0$ in the long exact sequence, and \[4\leq dim(\mathit{HBN}^{un} (D))\leq dim(\mathit{HBN}^{un} (D_0))+dim(\mathit{HBN}^{un} (D_1))=4.\]
 	\item If $K_0$, $K_1$ are both homologically essential, then each of the segments $\alpha\delta$ and $\beta\gamma$ intersects the essential unknot $U_1$ an odd number of times, and the two twisted orientations on $D_0$ are compatible with the two twisted orientations on $D_1$ under changing the smoothing. So the map in the long exact sequence sends $\mathit{HBN}^{un} (D_0)$ isomorphically to $\mathit{HBN}^{un} (D_1)$, and therefore \[dim (\mathit{HBN}^{un} (D))=0.\]
 	
 \end{enumerate}

This finishes the discussion of knots and $2$-component links. For links with more components, if there is a component which doesn't share a crossing with any other components, then we can apply K\"unneth formula.
Otherwise, there is at least one crossing shared by different components.  Again, we divide into two cases depending on whether there exists homological essential component or not.

\begin{enumerate}
	\item Suppose all components are null homologous, then we apply the same argument as in the case $(a)$ for $2$-component links.
	\item Suppose there are some homological essential components. Then, there has to be at least two such components, and they must share a crossing in the link diagram. Choose one such crossing, and then we apply the same argument as in case $(b)$ for $2$-component links.
\end{enumerate}
\end{proof}

It is time to do the global grading shift to ensure the Bar-Natan homology is a link invariant in $\rp{3}$ as a bigraded vector space. The usual convention is to apply a shift of $-n_-$ in the homological grading and a shift of $n_++2n_-$ in the quantum grading, where $n_+$ and $n_-$ are number of positive and negative crossings for an oriented link, respectively. As expected, we will get an invariant for twisted oriented links in $\rp{3}$, and we should use $n_+$ and $n_-$ counted with respect to the twisted orientation. 

	\begin{definition}
Let $D$ be a link diagram in $\rp{2}$ of a null homologous link in $\rp{3}$ with a twisted orientation. Let $n_+$ and $n_-$ be the number of positive and negative crossings, respectively, counted with respect to the twisted orientation on $D$. Then the \textbf{Bar-Natan chain complex } $\mathit{CBN}_{*,*}(D)$ is defined as \[\mathit{CBN}_{*,*}(D) = \mathit{CBN}^{un}_{*,*}(D)\left\{-n_-\right\}[n_+-2n_-],\] where $\mathit{CBN}^{un}_{*,*}(D)$ is the unadjusted Bar-Natan chain complex defined in $\ref{def:unadjusted chain complex}$, $\left\{-n_-\right\}$ means shifting down by $n_-$ in the homological grading, and $\left[n_+-2n_-\right]$ means shifting up by $n_+-2n_-$ in the quantum grading. The homology $\mathit{HBN}_{*,*}(D)$ of $\mathit{CBN}_{*,*}(D)$ is called the \textbf{Bar-Natan homology} of null homologous links in $\rp{3}$ with a twisted orientation.

\label{def:Bar-Natan complex}
\end{definition}

We are going to prove $\mathit{HBN}_{*,*}$ is an invariant of twisted oriented null homologous links in $\rp{3}$ by exhibiting filtered maps between $\mathit{CBN}_{*,*}$ induced by Reidemeister moves, which are isomorphisms on $\mathit{HBN}_{*,*}$. Furthermore, we are going to prove that these filtered map send canonical generators associated with the twisted orientation to the corresponding canonical generators. This follows the same strategy as in Section 6 in \cite{MR2729272}. See also Section 6 in \cite{MR2286127} for the Reidemeister moves in the usual Bar-Natan homology. We need to check a few more cases because the projection lies in $\rp{2}$, but there is no extra difficulty.

\begin{proposition}
	\label{prop:R moves}
	For each Reidemeister move as drawn in Figure \ref{fig:r-moves} relating link projections $D_0$ to $D_1$, we can define a filtered chain map \[\rho': \mathit{CBN}_{*,*}(D_0) \to \mathit{CBN}_{*,*}(D_1),\] such that the grading-preserving part $\rho$ of $\rho'$ gives an isomorphism on the Khovanov homology for links in $\rp{3}$, and \[\rho'(\left[s_o\right])=\left[s_{o'}\right],\] where $o$ is a twisted orientation on $D_0$ and $o'$ is the induced twisted orientation on $D_1$.
\end{proposition}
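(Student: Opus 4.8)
The plan is to follow the template of Section 6 of \cite{MR2729272} for the usual Bar-Natan homology in $S^3$, checking each Reidemeister move in Figure \ref{fig:r-moves} one at a time, and dealing with the extra moves $R$-$IV$ and $R$-$V$ (which involve an arc crossing the essential unknot $U_1$) as new cases. For each move one wants an explicit filtered chain map $\rho'$ between the two (adjusted) Bar-Natan complexes. The key structural observation is that each Reidemeister move is \emph{local}: outside a small disk in $\rp{2}$ the two diagrams $D_0$ and $D_1$ agree, and we may assume this disk contains no part of $U_1$ except for $R$-$IV$ and $R$-$V$. Consequently, the cube of resolutions of $D_0$ (resp. $D_1$) decomposes, up to chain homotopy equivalence, via a Gaussian elimination / delooping argument exactly as in the $S^3$ case: $R$-$I$ kills an acyclic summand, $R$-$II$ cancels two pairs of vertices, and $R$-$III$ is handled by the standard zig-zag of Gaussian eliminations. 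Since all of these cancellations only involve the local Frobenius algebra operations $m,\Delta,\iota,\eta$ — never the $1$-$1$ bifurcation map $f$, because within the local disk there is no copy of $U_1$ and hence no $1$-$1$ bifurcation — the map $\rho'$ one produces is literally the same as in the $S^3$ setting, and it is filtered (non-decreasing in quantum grading) by the same bookkeeping of grading shifts $\{-n_-\}[n_+-2n_-]$, noting that $n_\pm$ are computed with respect to the twisted orientation, which transforms under the move just as orientations do in $S^3$ (a twisted orientation near the local disk is just an honest local orientation).

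First I would recall the delooping/simplification lemma: a resolution containing a local unknot $U$ (necessarily a local unknot, by the null homologous hypothesis and the fact that the move is supported away from $U_1$) is chain homotopy equivalent to two copies of the complex with $U$ removed, via the change of basis $a=1+x$, $b=x$ that diagonalizes the Bar-Natan Frobenius algebra as displayed before Definition \ref{def:canonical generator}. Then for $R$-$I$, $R$-$II$, $R$-$III$ I would write down the (standard) composite chain homotopy equivalences and verify they are filtered; this is verbatim \cite{MR2729272} and \cite{MR2286127}. Next I would treat $R$-$IV$ and $R$-$V$ separately. For $R$-$V$ (an arc sliding across $U_1$, or a local unknot passing across $U_1$), the two diagrams differ by how many times the relevant arc meets $U_1$, and the induced map on complexes is again given by delooping plus the local maps $m,\Delta,\iota,\eta$; the $1$-$1$ bifurcation may appear in the ambient complex but does \emph{not} appear inside the disk of the move, so the local cancellation is unaffected. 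For $R$-$IV$ (a strand passing over/under $U_1$, i.e. Reidemeister-II-type but with one of the two strands being a piece of $U_1$), the situation is literally a local $R$-$II$ and is handled identically.

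The remaining point is the compatibility with canonical generators, $\rho'([s_o]) = [s_{o'}]$. Here I would argue as in \cite{MR2729272}: both $[s_o]$ and $\rho'([s_o])$ lie in the Bar-Natan homology, which by the previous Proposition is $2^{|L|}$-dimensional with the twisted-orientation generators as a basis; and $[s_{o'}]$ is the unique such generator whose leading (grading-preserving) part matches. So it suffices to check that the grading-preserving part $\rho$ sends the Khovanov-level leading term of $s_o$ to that of $s_{o'}$, together with a filtration-degree count showing no lower-order corrections can intrude (i.e. that $s_o$ and $s_{o'}$ sit in the same quantum filtration level, which is exactly the statement that $\rho'$ has filtration degree $0$ on these classes and $s_o,s_{o'}$ achieve it). The leading-term computation is a direct check on each move: forming the twisted oriented resolution commutes with the move because the oriented resolution only depends on orientations near crossings, and the labeling rule $l = d + o \bmod 2$ of Definition \ref{def:assignment of a and b} changes in the expected way — the key input being Lemma \ref{lem:assignment of a and b}, which guarantees $l$ is well defined on $\pi_0(L)$, hence transforms predictably under an isotopy realized by the move. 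I expect the main obstacle to be precisely this last bookkeeping for $R$-$IV$ and $R$-$V$: one must track carefully how the parity functions $d$ and $o$ on the components of the double cover $\widetilde{L}$ change when an arc crosses $U_1$, and confirm that the net effect on $l$ matches the change of label ($a \leftrightarrow b$) dictated by delooping and the local Frobenius maps; the rest is a faithful transcription of the $S^3$ arguments.
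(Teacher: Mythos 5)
Your overall template (reusing the chain maps of \cite{MR2729272} and checking each move) is the same as the paper's, but there are genuine gaps. The key structural claim — that because each move is supported in a disk away from $U_1$ the $1$-$1$ bifurcation map $f$ never enters, so the verification is ``literally the same as in the $S^3$ setting'' — is false. Whether an edge of the local cube for the $R$-$II$ or $R$-$III$ tangle is a merge, a split, or a $1$-$1$ bifurcation is determined by the \emph{global} connectivity of the strands through $\rp{2}$, not by the local picture: the two strands entering the $R$-$II$ disk can belong to a single circle meeting $U_1$ an odd number of times, in which case the edge maps $d_1',d_2'$ of the relevant square are $1$-$1$ bifurcations. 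The paper's proof is organized precisely around these new cases (case $(2)(b)(i)$ for $R$-$II$, where $d_1'=d_2'=f$ and one identifies $\rho'(s_o)$ with $s_{o'}$ only after adding the boundary $da=(d_1'(a),\Delta(a))$, and the analogous $R$-$III$ computation in Figure \ref{fig:r3}); your proposal dismisses them. Relatedly, your plan to prove $\rho'([s_o])=[s_{o'}]$ by matching ``leading terms'' plus an unspecified filtration count is not yet an argument: a priori $\rho'([s_o])$ is a linear combination of canonical generators, and contributions sitting in lower filtration cannot be excluded by the leading term alone. The paper (following Lee and Rasmussen) instead performs the chain-level case analysis, with explicit boundary corrections, and that is where the $1$-$1$ bifurcation cases must be confronted.

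You also misread the moves $R$-$IV$ and $R$-$V$: the curve $U_1$ is a reference circle (the boundary of the disk model of $\rp{2}$), not a component of the link, so $R$-$IV$ is not ``an $R$-$II$ with one strand a piece of $U_1$'' and there is nothing to deloop. These moves change neither the crossings nor the connectivity of any resolution, so the chain complex is literally unchanged and $\rho'$ is the identity; the only content is the bookkeeping that the twisted orientation, and hence the labels $a,b$, transform correctly because the arrows on the arc sliding across $U_1$ are reversed — which, to your credit, you do flag as the place requiring care.
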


\begin{proof}
	There are three things to do in the proof: First, we need to give the definition of the filtered chain map $\rho'$; Second, we need to check the grading-preserving part $\rho$ agrees with the one used in \cite{MR3189291} for Reidemeister moves on Khovanov homology in $\rp{3}$; Third, we need to check whether $\rho'$ sends canonical generators to canonical generators. As mentioned, the chain map $\rho'$ we are going to use will be the same as the ones in \cite{MR2729272}, and the prof is purely a book-keeping check. The situations for the Reidemeister moves I,IV and V are either trivial or identical to the case in $S^3$. However, for the Reidemeister moves
	II and III, they will involve some more case-by-case analysis than the usual situation in $S^3$. We will do some examples and leave the rest to the reader.
	
	For the Reidemeister moves IV and V, they don't change the chain complex at all, and we can take $\rho'$ to be the identity. It is worth mentioning that for the Reidemeister move V, as we move an arc through the essential unknot $U_1$, the orientations on this arc in $o$ and $o'$ will be reversed to each other.
	
	For the Reidemeister move I, since the operation of adding a curl is local, and the chain maps only involve $1$-$2$ and $2$-$1$ bifurcations, the same map and argument as in \cite{MR2729272} works, with $a = 1+x$, $b=x$. In terms of the global grading shift, if we add a positive curl, then we also add $1$ to $n_+$ for our definition of $n_+$. Therefore, the usual check of global grading shifts works in the same way.
	
			 		\begin{figure}[h]
		\[{
			\fontsize{7pt}{9pt}\selectfont
			\def\svgscale{0.6}
			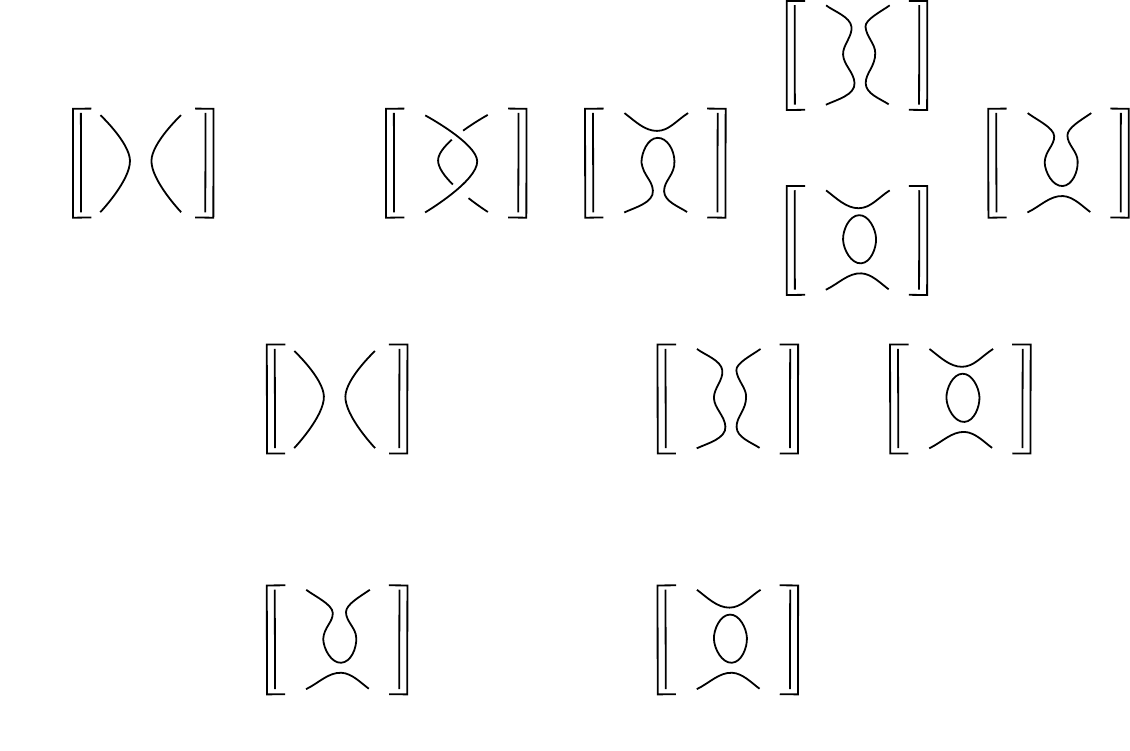
		}\]
		\caption{ Definition of $\rho'$ for $R$-$II$ moves }
		\label{fig:r2}
	\end{figure}
	For the Reidemeister move II, refer to Figure \ref{fig:r2} for the definition of the map $\rho'$. Here, $\iota$ is the map sending the state $z$ to $z\otimes 1$, where $1$ is assigned to the extra circle in the middle. In the chain complex $\mathit{CBN}(D_1)$, we know the maps $\Delta$ and $m$ because they correspond to a local splitting/merging of a circle. However, we don't know the maps $d_1'$ and $d_2'$: they could be any of $\Delta, m$ or $f$ depending on how the rest of the link diagram looks like. But the point is we don't need to use their properties in the definition of $\rho'$.  The grading-preserving part $\rho$ of $\rho'$ gives an isomorphism on Khovanov homology in $\rp{3}$, using the usual proof of canceling acyclic complexes (note that we only the property of $\Delta $ and $m$ in the proof for invariance of Khovanov homology in $\rp{3}$).
	
	To check that $\rho'$ sends canonical generators to canonical generators, we need to divide into further cases, depending on how the two arcs are connected and oriented in the link diagram.
	
	\begin{enumerate}
\item If the two arcs are oriented in the same direction, then by the argument in Proposition \ref{prop:canonical generator}, the two arcs won't belong to the same circle in the twisted oriented resolution, and the labels on these two circles in $s_o$ are different. Then $d_2'$ is a $2$-$1$ bifurcation map, which is given by $m$, and $m(s_o)=0$, as it merges two circles with different labels. Therefore, \[\rho'(s_o) = (s_{o}, \iota\circ d_2'(s_o)) = (s_o,0) = s_{o'}.\] 
\item If the two arcs are oriented in opposite directions, then we divide into cases depending on whether these two arcs belong to the same circle or not in the twisted oriented resolution.
\begin{enumerate}
	\item 
		\begin{figure}[h]
		\[{
			\fontsize{8pt}{10pt}\selectfont
			\def\svgscale{0.6}
			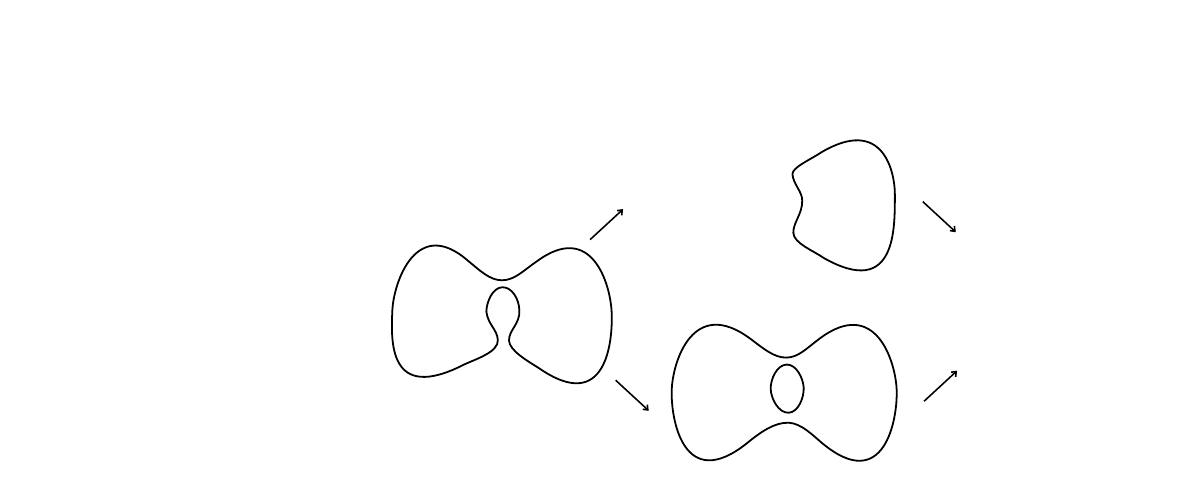
		}\]
		\caption{Schematic drawing for case $(2)(a)$ of $R$-$II$ moves}
		\label{fig:r2_1}
	\end{figure}
	
	If the two arcs belong to two different circles in the twisted oriented resolution, then the labels on the two circle will be the same in $s_o$, say both of them are $a$, following the rules we assign labels in Definition \ref{def:canonical generator}. See Figure \ref{fig:r2_1} for an illustration. Then $d_1'$ is a $1$-$2$ bifurcation map, and $d_2'$ is a $2$-$1$ bifurcation map. Denote $s_o$ by $a\otimes a$, which is the label of $s_o$ on the two circle that are changing through the Reidemeister move II. Then \[\iota\circ d_2'(a\otimes a) = \iota(a) = a\otimes 1 = a\otimes a + a\otimes b,\] and \[\rho'(a\otimes a) = (a\otimes a, a\otimes a +a\otimes b).\] Note that \[da = (d_1'(a),\Delta (a)) = (a\otimes a,a\otimes a),\] so \[\left[\rho'(s_o)\right] = \left[(0,a\otimes b) \right]= \left[s_{o'}\right] \text{ in } \mathit{HBN}(D_1).\]

	\item If the two arcs belong to the same circle in the twisted oriented resolution, then again we need to divide into two cases, depending on whether a $1$-$1$ bifurcation is involved or not.
	\begin{enumerate}
		\item 
		
				\begin{figure}[h]
			\[{
				\fontsize{8pt}{10pt}\selectfont
				\def\svgscale{0.6}
				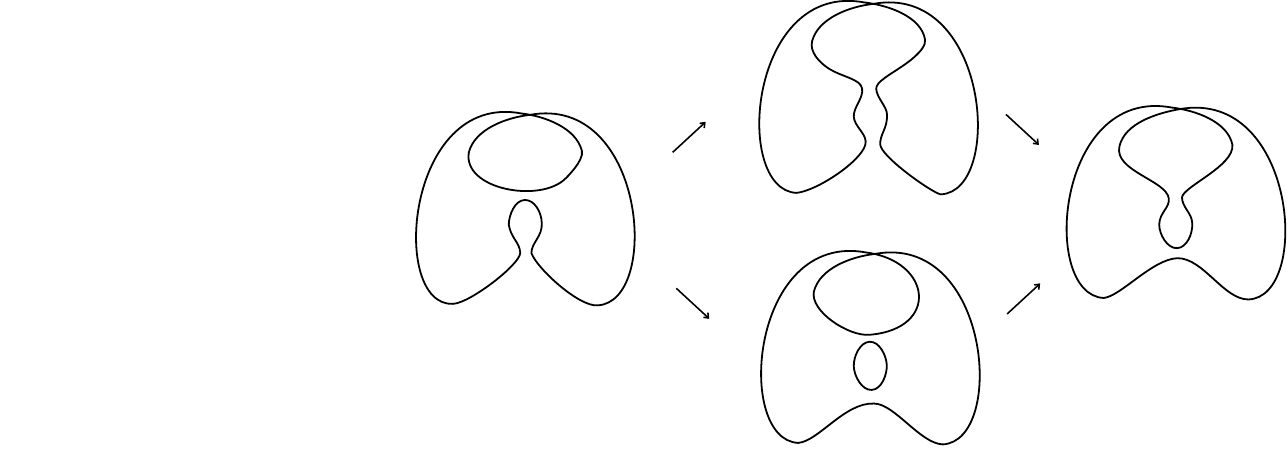
			}\]
			\caption{Schematic drawing for case $(2)(b)(i)$ of $R$-$II$ moves}
			\label{fig:r2_2}
		\end{figure}

		Suppose both $d_1'$ and $d_2'$ are $1$-$1$ bifurcations. Let's assume the label on the this circle in $s_o$ is $a$. See Figure \ref{fig:r2_2} for an illustration. Following the similar notation, we denote $s_o$ by $a$, and \[\iota\circ d_2'(a) = \iota(a) = a\otimes 1 = a\otimes a + a\otimes b,\] so \[\rho'(a) = (a,\iota\circ d_2'(a)) = (a,a\otimes a +a\otimes b). \] Again, we can cancel $(a,a\otimes a) $ in homology as \[da = (d_1'(a),\Delta(a)) = (a,a\otimes a),\] so \[\left[\rho'(s_o)\right] = \left[(0,a\otimes b)\right] = \left[s_{o'}\right] \text{ in } \mathit{HBN}(D_1). \]
		\item Suppose that none of $d_1'$ and $d_2'$ are $1$-$1$ bifurcation. Then it is the same as the usual check for the Reidemeister move II as in $S^3$, and we leave it to the reader. 
	\end{enumerate}
    \end{enumerate}
	\end{enumerate}

For the Reidemeister move III, the definition of the map $\rho'$ is again similar to the corresponding one for the Reidemeister move III in $S^3$, for the same reason as mentioned above: the edge maps that we need some properties of to define $\rho'$ are local, $i.e.$ they only involve the splitting/merging of a local circle. The proof that the grading-preserving part of $\rho'$ induces an isomorphism on Khovanov homology follows the same approach as in the case of $S^3$, as the proof of invariance of Khovanov homology under the Reidemeister move III in $\rp{3}$ is carried out similarly to that in $S^3$. 

To verify that it sends canonical generators to canonical generators, we need to consider different cases depending on how each strand is oriented and how these strands are connected outside the local region in the twisted oriented resolution. This requires more case analysis than the proof in $S^3$ because the link diagram lies in $\rp{2}$. We will illustrate one example and leave the rest to the reader.

\begin{figure}[h]
	\[{
		\fontsize{8pt}{10pt}\selectfont
		\def\svgscale{0.6}
		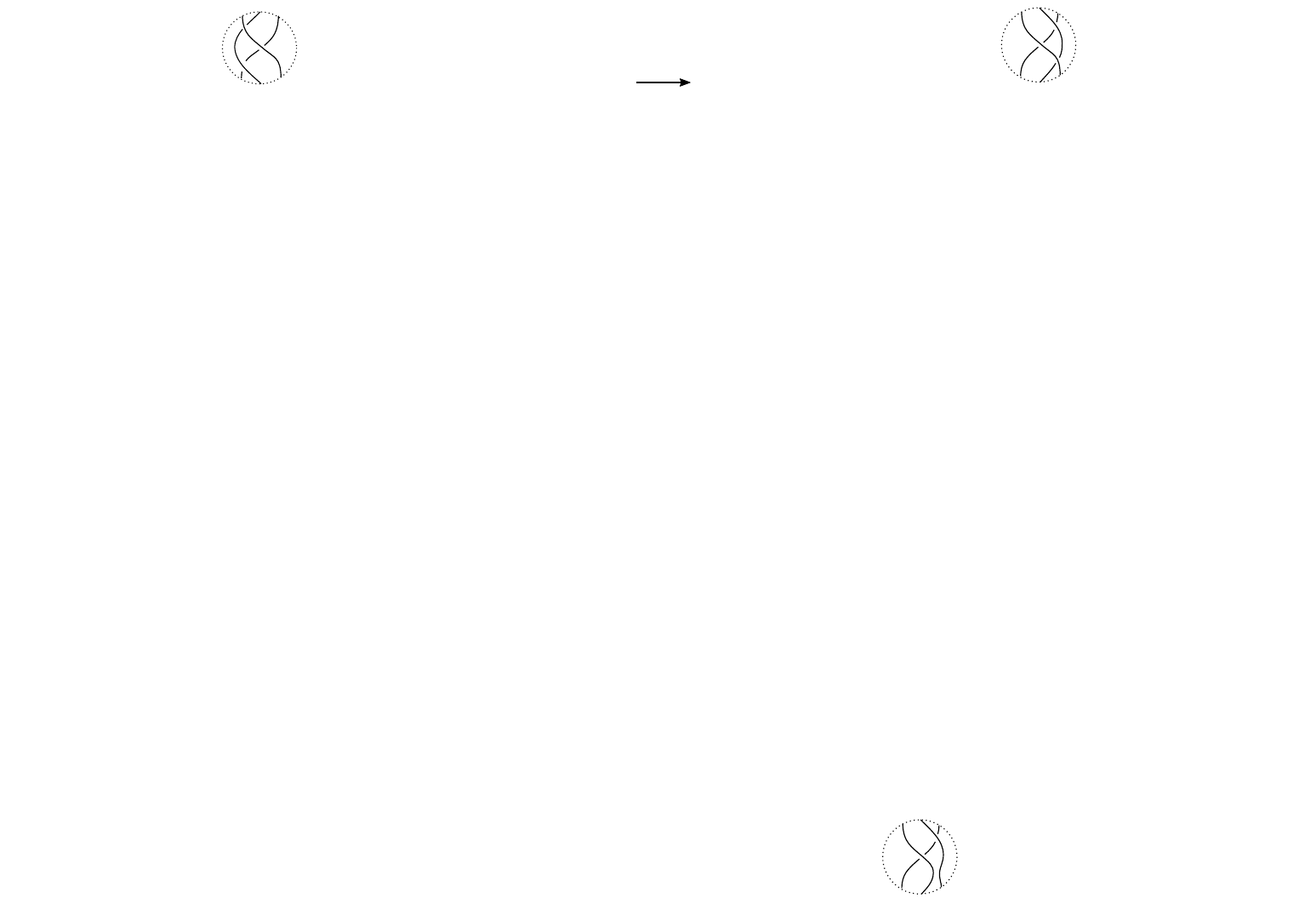
	}\]
	\caption{Definition of $\rho'$ for $R$-$III$ moves}
	\label{fig:definition_for_r3}
\end{figure}

The definition of the map $\rho':\mathit{CBN}(D_0) \to \mathit{CBN}(D_1)$ is the same as that in \cite{MR2729272}, which is recalled in Figure \ref{fig:definition_for_r3}. The grading-preserving part of $\rho'$ gives the induced map by the Reidemeister move III on the Khovanov homology in $\rp{3}$, as discussed above, so it remains to show \[\left[\rho(s_o)\right] = \left[s_{o'}\right] \text{ in } \mathit{HBN}(D_1).\] 

As in the proof in \cite{MR2729272}, three out of the four situations regarding relative orientations lead to trivial checks. For the remaining relative orientation, there are several more cases to examine than in \cite{MR2729272}, as the diagram lies in $\rp{2}$. In these additional cases, some of $d_2'$ and $d_4'$ will be the $1$-$1$ bifurcation map. As an example, we consider the following orientation on the strands and connectivity in the twisted oriented resolution, as shown in Figure \ref{fig:r3}. Note that the rule that we use to assign labels $a$ or $b$ to each circle depends on the choice of the point $p$. However, changing the position of $p$ results in a uniform switch between $a$ and $b$, so it won't affect the argument. Here, we demonstrate one possibility of the labeling for a specific choice of $p$ lying in the indicated region in the diagram.

In this case, $d_1'$ and $d_2'$ are $1$-$1$ bifurcations, $d_3'$ is a $1$-$2$ bifurcation, and $d_4'$ is a $2$-$1$ bifurcation. Note that \[\left[s_o\right] = [x+\beta'(x)],\,\,\,\,\text{ and } \left[s_{o'}\right]=\left[x+\widetilde{\beta}'(x)\right],\] where $x$ is as shown in Figure \ref{fig:r3}. Then \[\rho'(\left[s_o\right]) = \rho'(\left[x+\beta(x)\right]) = \left[x+\widetilde{\beta}'(x)\right] = \left[s_{o'}\right] \text{ in } \mathit{HBN}(D_1).\] 

		 		\begin{figure}[h]
	\[{
		\fontsize{7pt}{9pt}\selectfont
		\def\svgscale{0.6}
		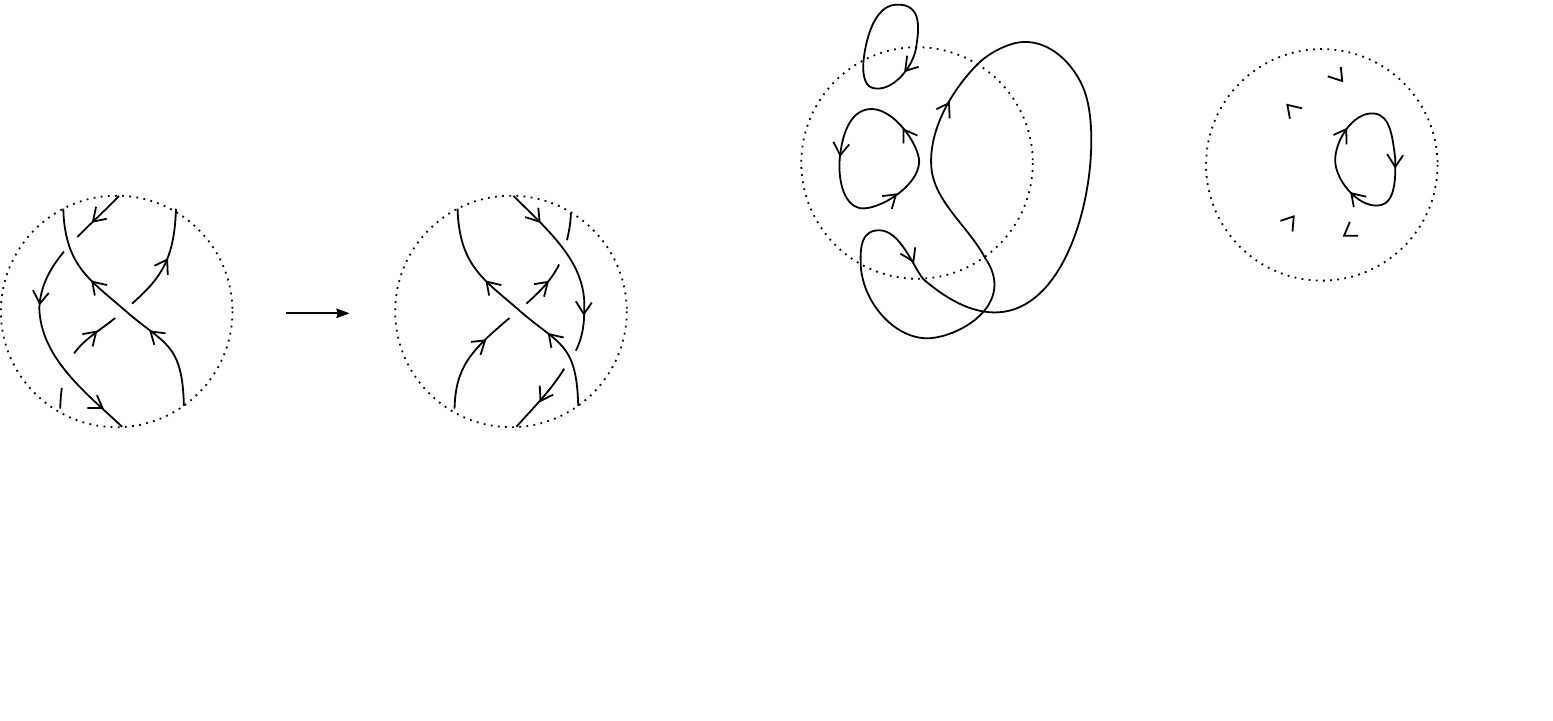
	}\]
	\caption{ An example computation of $\rho'$ for a $R$-$III$ move, with given orientation and connectivity }
	\label{fig:r3}
\end{figure}

\end{proof}

Combining all the discussion in this section, we get a well-defined Bar-Natan homology for twisted oriented null homologous links in $\rp{3}$.

\begin{theorem}
	The Bar-Natan homology $\mathit{HBN}(L)$ is a twisted oriented link invariant for null homologous links in $\rp{3}$ as a bigraded vector space, with a canonical basis \[\left\{s_o \mid o \text{ is a twisted orientation of } L\right\}.\] 
\end{theorem}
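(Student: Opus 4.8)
The plan is to assemble the pieces established earlier in this section into a single invariance statement. First, recall that Lemma \ref{lem:1-1bifurcation}, together with the local commutativity checks for the remaining two-crossing configurations in $\rp{2}$ (which are the same as the $S^3$ checks), shows that for \emph{any} diagram $D$ in $\rp{2}$ of a null homologous link the complex $\mathit{CBN}^{un}_{*,*}(D)$ of Definition \ref{def:unadjusted chain complex} is a genuine filtered chain complex; after the global shift of Definition \ref{def:Bar-Natan complex} we obtain $\mathit{CBN}_{*,*}(D)$, a filtered, homologically graded complex attached to the pair $(D, o)$ of a diagram with a twisted orientation. By the dimension Proposition preceding Definition \ref{def:Bar-Natan complex}, $\mathit{HBN}^{un}(D)$ — hence $\mathit{HBN}(D)$, since the global shift only relabels the gradings — has dimension $2^{|L|}$ when every component of $L$ is null homologous, and the set $\{[s_o] \mid o \text{ a twisted orientation of } D\}$ is a basis (and is empty, with $\mathit{HBN}(D)=0$, when some component is homologically essential); by Proposition \ref{prop:canonical generator} the $s_o$ are honest cycles. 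This already yields the statement about the canonical basis once diagram-independence is established.

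Second, I would invoke the list of Reidemeister moves for link projections in $\rp{2}$ (Figure \ref{fig:r-moves}, following \cite{MR1296890}): any two diagrams of isotopic null homologous links in $\rp{3}$ are connected by a finite sequence of these moves. Fixing a twisted orientation on $L$ — equivalently, an orientation on $\widetilde L$ reversed by $\tau$, Definition \ref{def:twisted orientation on links} — and transporting it along the sequence using Lemma \ref{lemma:twisted orientation} (together with the observation, recorded in the proof of Proposition \ref{prop:R moves}, that $R$-$V$ reverses the local arrow on the strand it pushes across $U_1$), each move presents a source $D_0$ and target $D_1$, each carrying an induced twisted orientation $o$, $o'$. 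By Proposition \ref{prop:R moves} each move induces a filtered chain map $\rho' : \mathit{CBN}_{*,*}(D_0) \to \mathit{CBN}_{*,*}(D_1)$ whose quantum-grading-preserving part $\rho$ is a quasi-isomorphism on the Khovanov complex in $\rp{3}$ and which satisfies $\rho'([s_o]) = [s_{o'}]$. A filtered chain map whose associated graded is a quasi-isomorphism is itself a filtered quasi-isomorphism (compare the induced maps of spectral sequences), so $\rho'$ induces a filtration-preserving isomorphism on $\mathit{HBN}$; composing over the sequence gives a diagram-independent identification $\mathit{HBN}(D_0) \xrightarrow{\ \sim\ } \mathit{HBN}(D_1)$ that respects the bigrading/filtration and carries canonical basis to canonical basis, which is exactly the assertion of the theorem.

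The step I expect to be the real obstacle is the bookkeeping of the global shift of Definition \ref{def:Bar-Natan complex} under each Reidemeister move, which is what upgrades ``$\rho'$ is filtered'' to ``$\rho'$ has filtration degree $0$'' — the property that makes $\mathit{HBN}$ an invariant as a filtered (bigraded) object rather than merely up to filtered quasi-isomorphism of some unspecified degree. One must verify that $n_+$ and $n_-$, counted \emph{with respect to the twisted orientation}, change under $R$-$I$, $R$-$II$, $R$-$III$ (and are unchanged under $R$-$IV$, $R$-$V$, since the strand pushed across $U_1$ carries no crossing) precisely so as to cancel the internal homological and quantum shifts built into $\rho'$. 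This is the analogue of the classical computation in \cite{MR2729272}, \cite{MR2286127}, but it has to be redone with the twisted sign count, and the configurations of $R$-$II$ and $R$-$III$ that involve a $1$-$1$ bifurcation (where $\rho'$ routes a state through the map $f = id_V$) are the ones requiring an explicit check. Everything else is routine: independence of the ordering of the crossings and of the basepoint $* \in \rp{3}$ is standard, and the auxiliary point $p$ on $U_1$ affects only the labeling bijection $o \mapsto [s_o]$, by a simultaneous swap $a \leftrightarrow b$ as in the remark after Definition \ref{def:canonical generator}, leaving the basis unchanged as a subset of $\mathit{HBN}(D)$.
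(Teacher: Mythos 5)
Your proposal is correct and follows essentially the same route as the paper, which proves this theorem simply by assembling the preceding results: the chain complex is well defined (Lemma \ref{lem:1-1bifurcation} and the local checks), the canonical elements $s_o$ are cycles forming a basis (Proposition \ref{prop:canonical generator} and the dimension count), and the Reidemeister maps of Proposition \ref{prop:R moves} are filtered quasi-isomorphisms of filtration degree $0$ carrying canonical generators to canonical generators. Your extra remarks on the spectral-sequence upgrade and the $n_\pm$ bookkeeping under the twisted orientation are exactly the points the paper disposes of inside the proof of Proposition \ref{prop:R moves}.
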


\begin{remark}
	When $L$ is a null homologous knot, the usual proof for knot in $S^3$ showing that $s_o$ lies in homological grading $0$ works here as well, as we use the twisted orientation to define $n_+,n_-$, as well as to form the resolution. For links, the homological grading of $s_o$ could be read off from the linking number between components of the covering link $\widetilde{L}$ in $S^3$, with the orientation on $\widetilde{L}$ that lifts the twisted orientation.
\end{remark}
	\section{Cobordism maps on Bar-Natan homology in $\rp{3}$}
	\label{sec:cobordism}
	In the usual setting for Bar-Natan or Lee homology in $S^3$, the reason one can get some genus bound from the homology is that one can associate some filtered chain maps to oriented cobordisms between links, which interact well with the canonical generators of the homology given by the orientations, and the filtration degree of the map is bounded by the Euler characteristic of the cobordism. The rule of thumb of this paper is to replace 'orientation' with 'twisted orientation' everywhere, so we need to develop a notion of twisted orientation on cobordisms. The nature definition of twisted orientable cobordism should ensure that it carries the twisted orientation from one end of the cobordism to the other end; that is, when the cobordism is formed by attaching bands, it should be done in a way that is compatible with the twisted orientation. More formally, we provide the following definition from the perspective of the double covers in $S^3$.
	
	\begin{definition}
		Let $\Sigma:L\to L'$ be a cobordism between null homologous links $L$ and $L'$ in $\rp{3}\times I$. A \textbf{twisted orientation} on $\Sigma$ is an orientation on its double cover $\widetilde{\Sigma}: \widetilde{L}\to \widetilde{L'}$ in $S^3\times I$, which is reversed by the deck transformation $\tau:S^3\times I \to S^3\times I$ in the $S^3$-direction. The cobordism $\Sigma$ is \textbf{twisted orientable} if such a twisted orientation exists.
		\label{def:twisted orientable cobordism}
	\end{definition}

	In particular, the double cover $\widetilde{\Sigma}$ is necessarily orientable if $\Sigma$ is twisted orientable. This definition is natural in view of the definition of twisted orientation on null homologous links in Definition \ref{def:twisted orientation on links}.

	Note that a twisted orientable cobordism in $\rp{3}\times I$ could be either orientable or unorientable in the usual sense, and there are orientable/unorientable cobordisms which are not twisted orientable, as demonstrated in the next example.
	
	
		 		\begin{figure}[h]
		\[{
			\fontsize{7pt}{9pt}\selectfont
			\def\svgscale{0.6}
			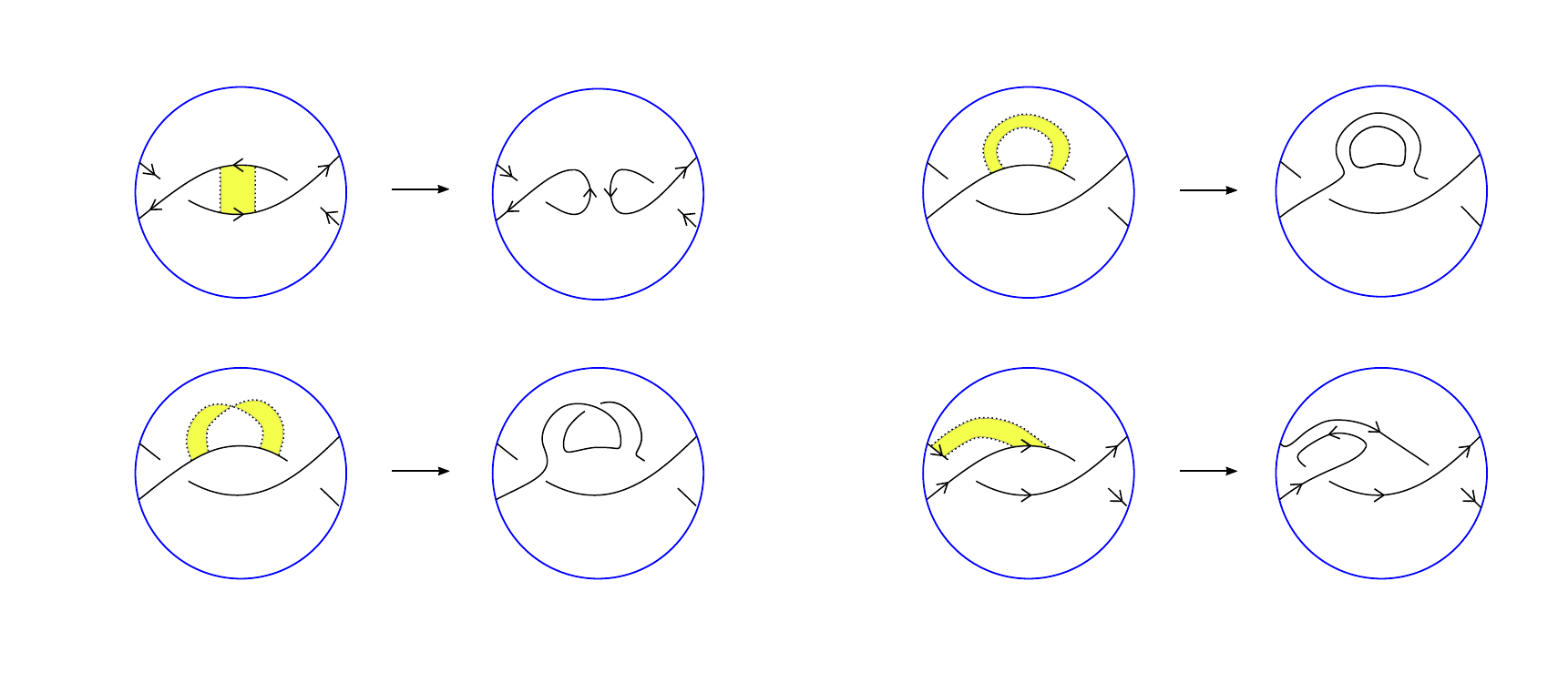
		}\]
		\caption{Different possibilities of twisted-orientability and usual orientability of a band attachment}
		\label{fig:twisted_orientable_cobordism}
	\end{figure}
	
	\begin{example}
		\label{ex:twisted orientable}
		 Figure \ref{fig:twisted_orientable_cobordism} shows all $4$ possibilities of surface cobordism in $\rp{3} \times I$, depending on whether it is twisted orientable and orientable in the usual sense.

		 The cobordism in (a) is twisted orientable, where the arrows represent twisted orientations on the boundary knots, and the band is added in an orientation-compatible way with respect to the twisted orientations. However, this cobordism is unorientable in the usual sense: it is topologically a M\"{o}bius band with a disk removed. 
		 
		 The cobordism in (b) is both twisted orientable and orientable in the usual sense. 
		 
		 The cobordism in (c) is not twisted orientable and unorientable in the usual sense.
		 
		 The cobordism drawn in (d) is not twisted orientable, where the arrows represent usual orientations on the boundary knots. The band is added in an orientation-compatible way with respect to the usual orientations, making this cobordism orientable in the usual sense. 
	\end{example}

	Now we are going to prove an analogous statement about the interaction between twisted orientable cobordisms and the canonical generators of the Bar-Natan homology generated by twisted orientation. The idea is to decompose $\Sigma$ into elementary pieces consisting of those corresponding to Reidemeister moves and a single handle attachment. To do that, we first prove that the twisted orientation on $\Sigma$ descends to a twisted orientation on each elementary pieces and their input and output boundaries.
	
	\begin{lemma}
		Suppose $\Sigma:L\to L'$ is a twisted orientable cobordism between homologous links $L$ and $L'$ in $\rp{3}\times I$. Decompose \[\Sigma=\Sigma_1\circ\Sigma_2\circ...\circ\Sigma_n,\] where each $\Sigma_i$ corresponds to either a Reidemeister move or a single handle attachment. Let $L_i$ denote the output boundary of the composition $\Sigma_1\circ\Sigma_2\circ...\circ\Sigma_i$, with $L_0 = L$ and $L_n = L'$. Then a twisted orientation on $\Sigma$ restricts to a twisted orientation on each of the $\Sigma_i$'s and $L_i$'s.
	\end{lemma}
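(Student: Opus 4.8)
\emph{Proof strategy.} The plan is to do everything on the double covers in $S^3\times I$, where by Definition~\ref{def:twisted orientable cobordism} and Definition~\ref{def:twisted orientation on links} a twisted orientation is literally an ordinary orientation satisfying a $\tau$-antisymmetry condition, and where orientations restrict to codimension-zero pieces and to boundaries for free.

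First I would lift the given decomposition. Let $\pi\colon S^3\times I\to\rp{3}\times I$ denote the fiberwise double cover and $\tau$ its deck transformation, acting in the $S^3$-direction and fixing the $I$-coordinate. Set $\widetilde{\Sigma}_i=\pi^{-1}(\Sigma_i)$ and $\widetilde{L}_i=\pi^{-1}(L_i)$, so that $\widetilde{\Sigma}=\widetilde{\Sigma}_1\circ\cdots\circ\widetilde{\Sigma}_n$ in $S^3\times I$ with intermediate boundaries $\widetilde{L}_i$. Because each $\Sigma_i$ occupies a fixed sub-interval of the $I$-coordinate and each $L_i$ is a single level, while $\tau$ preserves the $I$-coordinate, the involution $\tau$ carries each $\widetilde{\Sigma}_i$ onto itself and each $\widetilde{L}_i$ onto itself; moreover $\tau|_{\widetilde{\Sigma}_i}$ is exactly the deck transformation of $\widetilde{\Sigma}_i\to\Sigma_i$, and $\tau|_{\widetilde{L}_i}$ is the deck transformation of $\widetilde{L}_i\to L_i$.

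Next I would restrict the orientation. A twisted orientation on $\Sigma$ is, by definition, an orientation $\mathfrak o$ of $\widetilde{\Sigma}$ that is reversed by $\tau$. Cutting $\widetilde{\Sigma}$ along the $\widetilde{L}_i$ produces the codimension-zero pieces $\widetilde{\Sigma}_i$, each of which inherits an orientation $\mathfrak o_i$ from $\mathfrak o$; and each $\widetilde{L}_i$, being a level set of the $I$-coordinate (equivalently a boundary component of $\widetilde{\Sigma}_i$), inherits an orientation from $\mathfrak o_i$ via the product coorientation coming from $I$. Since $\tau$ reverses $\mathfrak o$ globally and maps $\widetilde{\Sigma}_i$ (resp.\ $\widetilde{L}_i$) onto itself, the restriction $\tau|_{\widetilde{\Sigma}_i}$ reverses $\mathfrak o_i$ and $\tau|_{\widetilde{L}_i}$ reverses the induced orientation of $\widetilde{L}_i$. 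By Definition~\ref{def:twisted orientable cobordism} this is exactly a twisted orientation on $\Sigma_i$, and by Definition~\ref{def:twisted orientation on links} a twisted orientation on $L_i$ --- provided $L_i$ is null homologous, which I would check as follows: a homologically essential component $C\subset L_i$ would lift to a single circle $\widetilde C$ on which $\tau$ acts freely, but a free involution of $S^1$ is orientation-preserving, contradicting that $\tau$ reverses $\mathfrak o$. Hence every $L_i$ is null homologous and every $\Sigma_i$ is a cobordism between null homologous links, so all the assertions are meaningful.

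I expect the only point requiring genuine care --- there is essentially no analytic content here --- to be step one: verifying that $\tau$, precisely because it acts ``in the $S^3$-direction,'' really does preserve each level $\widetilde{L}_i$ and each slab $\widetilde{\Sigma}_i$ and restricts on them to the corresponding deck transformation. Once this bookkeeping is in place, restricting $\mathfrak o$ and observing that the restrictions are still reversed by $\tau$ is immediate. (The existence of a decomposition of $\Sigma$ into Reidemeister moves and single handle attachments is standard Morse theory for surfaces in $\rp{3}\times I$ and is part of the hypothesis, so it need not be reproved.)
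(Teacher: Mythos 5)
Your proposal is correct and follows essentially the same route as the paper: pass to the double cover, use that $\tau$ acts fiberwise in the $S^3$-direction (preserving the $I$-coordinate) to see that the orientation restricts to each $\widetilde{\Sigma}_i$ and is still reversed, and then use the trivial $\tau$-action in the normal $I$-direction (the paper's collar $\widetilde{L}_i\times[-\epsilon,\epsilon]$, your product coorientation) to conclude $\tau$ reverses the induced orientation on each regular level $\widetilde{L}_i$. Your explicit check that each component of $L_i$ is null homologous (a free involution of a circle is orientation-preserving) is a welcome addition; the paper records this fact only in the remark following the lemma.
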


\begin{proof}
	Consider the double cover of the decomposition \[\widetilde{\Sigma}=\widetilde{\Sigma_1}\circ\widetilde{\Sigma_2}\circ...\circ\widehat{\Sigma_n},\] where each $\widetilde{\Sigma_i}$ corresponds to performing a pair of equivariant Reidemeister moves, or attaching a pair of equivariant handles. A twisted orientation on $\Sigma$ is an orientation on $\widetilde{\Sigma}$ which is reversed under the action of the deck transformation $\tau$ in the $S^3$-direction. Since the action of $\tau$ is fiberwise in the $S^3$ direction of $S^3\times I$, such an orientation on $\widetilde{\Sigma}$ restricts to an orientation on each $\widetilde{\Sigma_i}$ which is also reversed by $\tau$, so the twisted orientation on $\Sigma$ restricts to each $\Sigma_i$. 
	
	As for the links $L_i$, we again look at the double cover $\widetilde{L_i}$. Since they are regular fibers of the projection from $\widetilde{\Sigma}$ to $I$, a tubular neighborhood of $\widetilde{L_i}$ in $\widetilde{\Sigma}$ is homeomorphic to $\widetilde{L_i}\times \left[-\epsilon,\epsilon\right]$, where $\tau$ acts trivially in the $\left[-\epsilon,\epsilon\right]$-direction, as $\tau$ is a fiberwise action. However, $\tau$ acts in an orientation-reversing way on the neighborhood $\widetilde{L_i}\times \left[-\epsilon,\epsilon\right]$, so it must reverse the orientation on $\widetilde{L_i}$. Hence, we obtain a twisted orientation on $L_i$.
\end{proof}
	\begin{remark}
		In particular, the above lemma implies that every component of $L_i$ is null homologous, as $L_i$ is twisted orientable. Hence $\mathit{HBN}(L_i)$ is non-trivial for each $i$. Note that it is not true that all closed loops in $\Sigma$ are null homologous; for example, the fiber over the critical value of a $1$-handle attachment has a tubular neighborhood homeomorphic to a M\"obius band, as could be seen in (a) of Figure \ref{fig:twisted_orientable_cobordism}. In that case, the action of $\tau$ on a tubular neighborhood $\widetilde{L}\times \left[-\epsilon,\epsilon\right]$ is the usual covering map from an annulus to a M\"obius band, which is non-trivial in the $\left[-\epsilon,\epsilon\right]$-direction and orientation-preserving when restricted to $\widetilde{L}$.
	\end{remark}

	Now we can prove the analogous statement about the effect of the maps induced by twisted orientable cobordisms on Bar-Natan homology in $\rp{3}$. 
	
	\begin{proposition}
		\label{prop:bound on Euler characteristics}
		Suppose $\Sigma:L\to L'$ is a twisted orientable cobordism between null homologous links $L$ and $L'$ in $\rp{3} \times I$. Then one can define a filtered chain map of degree $\chi(\Sigma)$, \[F_{\Sigma}:\mathit{CBN}_{*,*}(L) \to \mathit{CBN}_{*,*}(L'),\] such that
		\begin{equation*}
			F_{\Sigma}(\left[s_o\right]) = \sum_{\left\{o_i\right\}}\left[s_{o_{i_{|L'}}}\right], \tag{$*$}
		\end{equation*}
		
		where $o$ is a twisted orientation on $L$, $\left\{o_i\right\}$ is the set of twisted orientations on $\Sigma$ that restrict to $o$ on $L$, $o_{i_{|L'}}$ is the restriction of such twisted orientations on $L'$, and $s_o$ (respectively $s_{o_{I_{|L'}}}$) are the corresponding canonical generators of $\mathit{HBN}(L)$ (respectively $\mathit{HBN}(L')$) defined as in Definition \ref{def:canonical generator}.
	\end{proposition}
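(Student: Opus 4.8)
The plan is to build $F_{\Sigma}$ out of elementary pieces, exactly as one does for Lee/Bar--Natan homology in $S^3$ (see \cite{MR2173845}, \cite{MR2286127}, \cite{MR2729272}), the only genuinely new ingredient being the $1$-$1$ handle, whose cobordism map is forced by Lemma \ref{lem:1-1bifurcation} to be $f=\mathrm{id}_V$. First present $\Sigma$ as a movie: choose a decomposition $\Sigma=\Sigma_n\circ\cdots\circ\Sigma_1$ in which each $\Sigma_i$ is either the trace of a single Reidemeister move on diagrams in $\rp{2}$ (one of those in Figure \ref{fig:r-moves}), or a single $0$-, $1$-, or $2$-handle attachment; let $L_i$ be the outgoing boundary of $\Sigma_1\circ\cdots\circ\Sigma_i$, with $L_0=L$, $L_n=L'$. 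By the preceding Lemma, a fixed twisted orientation on $\Sigma$ restricts to a twisted orientation on each $\Sigma_i$ and each $L_i$; in particular each $L_i$ is null homologous and twisted oriented, so its canonical generators $s_o$ (Definition \ref{def:canonical generator}) and $\mathit{HBN}(L_i)$ are defined.

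Next I define a filtered chain map $F_{\Sigma_i}\colon \mathit{CBN}_{*,*}(L_{i-1})\to \mathit{CBN}_{*,*}(L_i)$ for each piece. For a Reidemeister move I take $F_{\Sigma_i}=\rho'$ from Proposition \ref{prop:R moves}: it is a filtered chain map of degree $0=\chi(\Sigma_i)$ which sends $[s_o]$ to $[s_{o'}]$ for the induced twisted orientation $o'$. For a handle attachment I use the structure maps of the Bar--Natan Frobenius algebra of Definition \ref{def:frobenius algebra}, exactly as in $S^3$: a $0$-handle (birth of a local unknot) acts by $-\otimes\iota(1)=-\otimes 1$, a $2$-handle (death) by applying $\eta$ to the dying circle, and a $1$-handle (saddle) by $m$ on a $2$-$1$ band, by $\Delta$ on a $1$-$2$ band, and by $f=\mathrm{id}_V$ on a $1$-$1$ band, according to the three bifurcation types of Figure \ref{fig:bifurcation}. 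That these are chain maps follows from the local ``neck-cutting'' relations for the circles not touched, together with the commutativity relations $(1)$--$(3)$ of Lemma \ref{lem:1-1bifurcation} for any interaction with the $1$-$1$ bifurcation. The claim that $F_{\Sigma_i}$ has filtered degree $\chi(\Sigma_i)$ is a routine grading computation, identical to that in \cite{MR2729272} for the local pieces $m,\Delta,\iota,\eta$, plus the observation already recorded in Lemma \ref{lem:1-1bifurcation} that $f=\mathrm{id}_V\colon V\to V[1]$ is filtered, so that a $1$-$1$ saddle contributes the same $-1$ to the filtration shift as the ordinary saddles.

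The heart of the argument is that each $F_{\Sigma_i}$ acts on canonical generators by $(*)$ at the level of one piece: $F_{\Sigma_i}([s_o])=\sum [s_{o'}]$, summed over twisted orientations $o'$ on $L_i$ that cobound with $o$ across $\Sigma_i$. I work in the diagonal basis $a=1+x$, $b=x$, where $m(a\otimes a)=a$, $m(b\otimes b)=b$, $m(a\otimes b)=0$, $\Delta(a)=a\otimes a$, $\Delta(b)=b\otimes b$, $\eta(a)=\eta(b)=1$, $1=a+b$, and $f(a)=a$, $f(b)=b$. For births, deaths, and $2$-$1$/$1$-$2$ saddles the verification is word-for-word the $S^3$ one of \cite{MR2173845} (a birth produces the two extensions through $1=a+b$; a death matches labels through $\eta$; a merge is nonzero exactly when the two circles carry equal labels, i.e.\ when $o$ extends, and then $m$ picks out the unique extension; a split duplicates a label through $\Delta$), once one knows — by the resolution-level argument of Proposition \ref{prop:canonical generator} — that ``compatible with the twisted orientation'' behaves toward twisted oriented resolutions just as ``compatible with the orientation'' does in $S^3$. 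The new case is a $1$-$1$ saddle, where $F_{\Sigma_i}=\mathrm{id}_V$: here I must check (i) a twisted orientation $o$ on $L_{i-1}$ always extends, and extends uniquely, across a $1$-$1$ band to $L_i$ — intuitively because the half-twist of the band across $U_1$ is exactly absorbed by the sign-flip of arrows across $U_1$ built into a twisted orientation, consistently with $f=\mathrm{id}_V$ never vanishing — and (ii) the labeling function satisfies $l(C)=l(C')$ for the circle $C\subset (L_{i-1})_o$ before the move and $C'\subset (L_i)_{o'}$ after, which is a parity count of intersections with $U_1$ of precisely the flavour of Lemma \ref{lem:assignment of a and b} and the case analysis of Proposition \ref{prop:canonical generator}. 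Granting (i)--(ii), $f(s_o)=\mathrm{id}_V(s_o)=s_{o'}$. Finally set $F_{\Sigma}=F_{\Sigma_n}\circ\cdots\circ F_{\Sigma_1}$; it is filtered of degree $\sum_i\chi(\Sigma_i)=\chi(\Sigma)$, and $(*)$ follows by induction on $i$ from the one-piece statements together with the transitivity of ``cobound along a composite''. (Independence of $F_{\Sigma}$ from the chosen movie follows from the $\rp{2}$-analogue of the movie-move argument of \cite{MR2729272}, but this is not needed for the statement.)

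I expect the $1$-$1$ saddle to be the only real obstacle: in $S^3$ the corresponding edge map is $0$, so there is an ``incompatibility'' escape hatch, whereas here $f=\mathrm{id}_V$ is invertible, which is exactly why one needs the automatic-and-unique extension (i) and the label-invariance (ii); everything else is formal bookkeeping parallel to the $S^3$ case.
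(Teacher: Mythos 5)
Your proposal is correct and follows essentially the same route as the paper's proof: decompose $\Sigma$ into Reidemeister and elementary Morse pieces, use $\rho'$ from Proposition \ref{prop:R moves} for the former and the Frobenius maps $m,\Delta,\iota,\eta$ together with $f=\mathrm{id}_V$ for the latter, then verify $(*)$ piece by piece, with the $1$-$1$ saddle handled exactly as in the paper by unique extension of twisted orientations and preservation of the label, and the merge case by ``labels equal iff the twisted orientation extends''. The only ingredient of the paper's argument you leave implicit is the choice of a reference point $p\in U_1$ with $p\times I$ disjoint from $\Sigma$ (possible by genericity), which is what makes the labeling function of Definition \ref{def:assignment of a and b}, and hence your checks (i)--(ii) and the label comparisons for merges, consistent across all the intermediate links $L_i$ of the movie.
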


\begin{proof}
	As in the proof of analogous statement in \cite{MR2729272}, we divide the cobordism $\Sigma$ into elementary pieces, define the map for each elementary piece, and check the desired properties hold on each piece. The above lemma proves that each elementary piece of $\Sigma$ is twisted orientable between twisted orientable links. See also Theorem 5.5 in \cite{manolescu2023rasmussen} for the similar statement in the situation of Lee homology in $\rp{3}$.

	Recall Definition \ref{def:assignment of a and b} where we describe the rules to assign $a$ and $b$ to each circle in the twisted oriented resolution for the definition of $s_o$, we need to choose a point $p$ on the essential unknot $U_1$. Similarly here, we will choose a point $p$, such that $p\times I \subset \rp{3} \times I$ is away from the cobordism $\Sigma$. This chosen $p$ will serve as the reference point when talking about the canonical generators in $\mathit{HBN}(L_i)$ for each $L_i$.

	The cobordism $\Sigma $ could be expressed as the composition of a sequence of Reidemeister cobordisms and elementary Morse cobordisms. For the Reidemeister cobordisms, we will use the map $\rho'$ defined in Proposition \ref{prop:R moves}. Each of these Reidemeister cobordisms is topologically a cylinder of Euler characteristic $0$, and there is a unique twisted orientation on it that extends the given twisted orientation on the input. Proposition \ref{prop:R moves} shows these maps satisfy the equation $(*)$. 
	
	Elementary Morse cobordisms are given by attaching $0$, $1$ or $2$-handles. For the $0$ and $2$-handle attachments, which are local, we use the unit and counit maps:\[\iota: \mathbb{F} \to V, \,\,\,\, \eta:V \to \mathbb{F}\] in the Frobenius algebra structure of $V$ as defined in Definition \ref{def:frobenius algebra}. The twisted orientability condition puts no further restrictions on $0$ and $2$-handle attachments, as they correspond to attaching a pair of disjoint $0$ and $2$-handles in the double cover where $\tau$ acts by switching the two copies. Such cobordisms are always twisted orientable, where a twisted orientation is a pair of opposite orientations on the two copies of the handles in the double cover. It is trivial to check that the desired properties hold for these cobordism maps, as in the usual case for cobordism in $S^3\times I$.
	
	For the $1$-handle attachments, we will use $m,
	\Delta$ and $f$ on each smoothing of the cube of resolutions, depending on whether it is $2$-$1$, $1$-$2$, or $1$-$1$ bifurcation in each resolution. Each of $m,\Delta$ and $f$ is a filtered map of degree $-1$, and the proof that such a definition gives a chain map is the same as checking $d^2=0$ in the Bar-Natan chain complex, which is covered in Lemma \ref{lem:1-1bifurcation} and the discussion after that. 
		 		\begin{figure}[h]
		\[{
			\fontsize{7pt}{9pt}\selectfont
			\def\svgscale{1}
			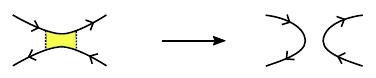
		}\]
		\caption{Local picture near a $1$-handle attachment}
		\label{fig:local_band_addition}
	\end{figure}

	It is left to show that the canonical generators associated with twisted orientation behave nicely under such maps. Again, the reason it works is that we attach $1$-handles which are compatible with the twisted orientation. In particular, homologically essential knots won't be created in the process of attaching $1$-handles.  More explicitly, the twisted orientations on the two arcs where we attach the $1$ handle are as shown in Figure \ref{fig:local_band_addition}. Adding the $1$ handle changes a twisted oriented resolution to another twisted oriented resolution. There are three possible cases for how these two arcs are connected in the twisted orientation resolution:
	\begin{enumerate}
		\item The two arcs $\alpha\beta $ and $\gamma \delta $ belong to the same circle in the twisted orientation resolution, and $\alpha$ is adjacent to $\gamma$ on the circle. Then the induced map by the $1$-handle attachment is $\Delta$, such that \[\Delta(a) = a\otimes a, \,\,\,\Delta (b) = b\otimes b,\] which proves $F_{\Sigma}([s_o]) = [s_{o'}]$ in this case.
		\item The two arcs $\alpha\beta $ and $\gamma \delta $ belong to the same circle in the twisted orientation resolution, but $\alpha $ is adjacent to $\delta $ on the circle. Then the induced map by the $1$-handle attachment is $f$, such that \[f(a) =a, f(b)=b,\] so $F_{\Sigma}([s_o]) = [s_{o'}]$ as well.
		\item The two arcs $\alpha\beta $ and $\gamma \delta $ belong to different circles in the twisted oriented resolution. The labels on these two circles in $s_o$ will be the same, by the rule in Definition \ref{def:assignment of a and b}. Then the induced map induced map by the $1$-handle attachment is $m$, such that \[m(a\otimes a )=a ,\,\,\,\,m(b\otimes b) =b, \] so $F_{\Sigma}([s_o]) = [s_{o'}]$.
	\end{enumerate}
		\begin{figure}[h]
		\[{
			\fontsize{7pt}{9pt}\selectfont
			\def\svgscale{0.9}
			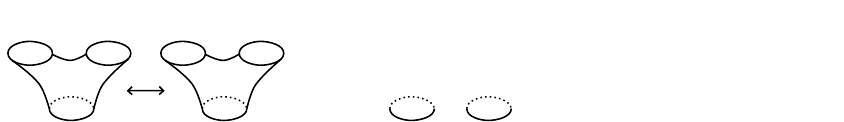
		}\]
		\caption{Action of $\tau$ on a pair of equivariant $1$-handles}
		\label{fig:action_on_1_handles}
	\end{figure}
In terms of the set of twisted orientations on $\Sigma_i$ extending the given one on the boundary, we note the following. If $\Sigma_i$ represents a $1$-handle attachment that splits one circle into two circles, then it is topologically a pair-of-pants. A priori, there are two different possibilities for the covering $\widetilde{\Sigma_i}$ as shown in Figure \ref{fig:action_on_1_handles} $(a)$ and $(b)$. It is either a disjoint union of two pairs-of-pants where the action $\tau$ switches the two components, as shown in (a), or an annulus with two disks removed, as shown in (b), where the action of $\tau$ is a rotation by $180^{\circ}$. However, $\tau$ acts in an orientation-preserving way in case (b), so this possibility is ruled out by the assumption that $\Sigma$ is twisted orientable. The situation for a $1$-handle attachment that merges two circles into one circle is exactly the same, by turning the cobordism upside down. For a $1$-handle attachment twisting one circle into another, the covering $\Sigma_i$ is again an annulus with two disks removed, as shown in (c) of Figure \ref{fig:action_on_1_handles}, while the action $\tau$ is different: It is the usual covering map from an annulus to a M\"obius band.

After this clarification, the rest proof follows exactly in the same way as in \cite{MR2729272}. For $\Sigma_i$ representing a $1$-handle attachment splitting a circle into two or twisting a circle, there is a unique twisted orientation on $\Sigma_i$ which extends the twisted orientation on the input boundary. If $\Sigma_i$ represents a $1$-handle attachment merging two circles into one, then depending on which components these two circles belong to, either all twisted orientations on the input have a unique extension to $\Sigma_i$, or half of them have a unique extension, in which case \[F_{\Sigma_i}(s_{o})=0,\] for twisted orientations $o$ on the input which doesn't extend to $\Sigma_i$.
\end{proof}

\begin{remark}
	 We won't discuss the functoriality issue of cobordism maps in this paper, $i.e.$ whether the map $F_{\Sigma}$ is well-defined up to filtered chain homotopy. The existence of such map $F_{\Sigma}$ with the stated property is enough to prove some genus bound.
\end{remark}

If we restrict to connected cobordisms between null homologous knots, we obtain the following immediate corollary, which is what we need for the genus bound.
\begin{corollary}
	 If $L$, $L'$ are both null homologous knots and $\Sigma$ is a connected, twisted orientable cobordism between them in $\rp{3}\times I$, then \[F_{\Sigma}(\left[s_o\right]) = \left[s_{o'}\right],\] where $o'$ is the restriction of the unique twisted orientation on $\Sigma$ which extends $o$, and \[F_{\Sigma}: \mathit{HBN}(L) \to \mathit{HBN}(L')\] is an isomorphism of filtration degree $\chi(\Sigma)$.
	 
	 \label{coro:chi bound}
\end{corollary}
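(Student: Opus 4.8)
The plan is to obtain this as a direct consequence of Proposition \ref{prop:bound on Euler characteristics} once one understands how many twisted orientations on $\Sigma$ restrict to a given one on $L$. Proposition \ref{prop:bound on Euler characteristics} already supplies a filtered chain map $F_\Sigma:\mathit{CBN}_{*,*}(L)\to\mathit{CBN}_{*,*}(L')$ of filtration degree $\chi(\Sigma)$ with $F_\Sigma([s_o]) = \sum_{\{o_i\}}[s_{o_{i_{|L'}}}]$, the sum being over twisted orientations $o_i$ on $\Sigma$ that restrict to $o$ on $L$. So the whole content of the corollary is the claim that, when $\Sigma$ is connected and $L$, $L'$ are knots, this set is a singleton; equivalently, that the restriction map $r_L:\mathrm{TwOr}(\Sigma)\to\mathrm{TwOr}(L)$ is a bijection, and likewise $r_{L'}:\mathrm{TwOr}(\Sigma)\to\mathrm{TwOr}(L')$.

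To prove the bijection I would pass to the double cover $\widetilde\Sigma\subset S^3\times I$, which is orientable because $\Sigma$ is twisted orientable, and split into two cases according to whether $\widetilde\Sigma$ is connected. If $\widetilde\Sigma$ is connected, it has exactly two orientations; the existence of a twisted orientation forces $\tau$ to be orientation-reversing on it, so both orientations are twisted orientations, and since restricting an orientation of a connected compact surface with boundary to any one boundary circle is injective, $r_L$ is an injection between two-element sets (the target $\mathrm{TwOr}(L)$ has $2^{|L|}=2$ elements since $L$ is a knot), hence a bijection. If instead $\widetilde\Sigma=\Sigma_0\sqcup\tau\Sigma_0$ is disconnected, then a twisted orientation amounts precisely to a choice of orientation on the connected piece $\Sigma_0$ (the orientation on $\tau\Sigma_0$ being forced by $\tau$-equivariance), again two choices, and restriction to the boundary circle $\widetilde L\cap\Sigma_0$ of $\Sigma_0$ is injective, so $r_L$ is again a bijection. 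The lemma proved just before Proposition \ref{prop:bound on Euler characteristics} guarantees that restriction does land in $\mathrm{TwOr}(L)$, so this suffices; the argument for $r_{L'}$ is identical.

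Granting this, write $\hat o$ for the unique twisted orientation on $\Sigma$ extending $o$ and $o'=\hat o_{|L'}$ for its restriction to $L'$; then the sum in Proposition \ref{prop:bound on Euler characteristics} collapses to the single term $[s_{o'}]$, giving $F_\Sigma([s_o])=[s_{o'}]$. Finally, by Theorem \ref{thm:bar natan homology}, $\mathit{HBN}(L)$ and $\mathit{HBN}(L')$ are two-dimensional with bases $\{[s_o]\}_{o\in\mathrm{TwOr}(L)}$ and $\{[s_{o'}]\}_{o'\in\mathrm{TwOr}(L')}$, and the assignment $o\mapsto o'$ is the composite of the two bijections $\mathrm{TwOr}(L)\xleftarrow{\ \sim\ }\mathrm{TwOr}(\Sigma)\xrightarrow{\ \sim\ }\mathrm{TwOr}(L')$, hence a bijection; therefore $F_\Sigma$ carries a basis of $\mathit{HBN}(L)$ bijectively onto a basis of $\mathit{HBN}(L')$ and induces an isomorphism, of filtration degree $\chi(\Sigma)$ as already recorded in Proposition \ref{prop:bound on Euler characteristics}. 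The only step with genuine content is the connectedness case analysis for $\widetilde\Sigma$ (together with the elementary fact that orientations of a connected surface-with-boundary are detected on a single boundary component); once that is in place, the corollary is pure bookkeeping on top of Proposition \ref{prop:bound on Euler characteristics}.
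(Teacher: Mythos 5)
Your proposal is correct and takes essentially the same route as the paper: the paper presents this as an immediate consequence of Proposition \ref{prop:bound on Euler characteristics}, and the only content you add---the double-cover case analysis showing that a connected twisted orientable cobordism between null homologous knots carries exactly one twisted orientation extending a given one on $L$ (so the sum collapses to a single term and $F_\Sigma$ carries the canonical basis of the two-dimensional $\mathit{HBN}(L)$ bijectively onto that of $\mathit{HBN}(L')$)---is precisely the bookkeeping the paper leaves implicit, consistent with the extension counts already noted in the proof of that proposition. The filtration degree $\chi(\Sigma)$ statement is, as you say, inherited directly from the proposition, so nothing further is needed.
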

	 
	\section{The $s$-invariant and genus bound}
\label{sec: invariant}
In the previous two sections, we have proven all the formal properties of the Bar-Natan homology required to define the $s$-invariant and bound slice genus. We summarize the results in this section, following exactly the same procedure as in the case of Bar-Natan or Lee homology in $S^3$. The only difference is that the class we obtain a genus bound for is the class of twisted orientable slice surfaces of null homologous knot in $\rp{3}\times I$ instead of orientable slice surfaces. The treatment here follows the summary of the $s$-invariant for Bar-Natan homology in $S^3$ in Section 2, \cite{MR3189434} closely.


Suppose $L$ is a twisted oriented null homologous link in $\rp{3}$. Denote its Bar-Natan chain complex by $C_{*,*}(L) = \mathit{CBN}_{*,*}(L)$. Consider the finite length filtration filtration by the quantum grading on $C(L)$: \[0 \subset ...\subset \mathcal{F}_{q+1}C(L)\subset \mathcal{F}_qC(L)\subset\mathcal{F}_{q-1}C(L)\subset ... \subset C(L),\]
where $\mathcal{F}_qC(L) = \bigoplus_{j\geq q}C_{*,j}(L)$. Note that due to the possible existence of $1$-$1$ bifurcation in the edge map, which twists one circle into another circle,  the parity of the quantum grading is no longer the same on the chain complex $C_{*,*}(L)$. Therefore, we increase the filtration degree $q$ by $1$ at each step, instead of by $2$ as in the case of Bar-Natan chain complex for links in $S^3.$ The differential map of the chain complex $C$ respects the filtration in the quantum grading, as in the usual case for $m, \Delta$, and we choose the map $f:V \to V\left[1\right]$ to be filtered in its definition, so each $\mathcal{F}_qC(L)$ is a subcomplex.

\begin{definition}
Let $K$ be a twisted oriented null homologous knot in $\rp{3}$. Define the numbers \begin{align*}
	&s_{min}(K) = \text{max}\left\{q\in\mathbb{Z} \mid i_{*}:H(\mathcal{F}_qC(K))\to H(C(K))\cong \mathbb{F}^2 \text{ is surjective}\right\},\\
	&s_{max}(K) = \text{max}\left\{q\in\mathbb{Z} \mid i_{*}:H(\mathcal{F}_qC(K))\to H(C(K))\cong \mathbb{F}^2 \text{ is nonzero}\right\}.
\end{align*}
\label{def:smin and smax}
\end{definition}
\begin{lemma}
	We have $s_{max}(K) = s_{min}(K) +2$.
\end{lemma}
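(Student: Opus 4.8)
The plan is to transplant the $S^3$ argument (Rasmussen; Beliakova--Wehrli; and the summary in Section~2 of~\cite{MR3189434}), replacing ``orientation'' by ``twisted orientation'' throughout. Fix a diagram $D$ of $K$ and let $o,\bar o$ be its two twisted orientations; they are swapped by reversing all arrows, and by Lemma~\ref{lemma:twisted orientation} these are exactly the two. By Theorem~\ref{thm:bar natan homology} the classes $[s_o],[s_{\bar o}]$ form a basis of $\mathit{HBN}(K)\cong\f^2$, and since reversing the twisted orientation leaves the oriented resolution $D_o=D_{\bar o}$ unchanged while flipping the labeling function $l$ on every circle, we may write $s_o=\bigotimes_j c_j$ and $s_{\bar o}=\bigotimes_j\bar c_j$ with $c_j\in\{a,b\}$, $\bar a=b$, $\bar b=a$. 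Writing $a=1+x$, $b=x$ and expanding in the basis $\{1,x\}$, both $s_o$ and $s_{\bar o}$ have the same lowest--$q$ term $x^{\otimes k}$ (with coefficient $1$), where $k=\#\pi_0(D_o)$; hence they lie in the same filtration level, say $\mathcal{F}_F C(K)$, and not in $\mathcal{F}_{F+1}C(K)$. In $s_o+s_{\bar o}$ this common leading term cancels over $\f_2$, while the next term $\sum_j x\otimes\cdots\otimes 1_{(j)}\otimes\cdots\otimes x$ survives, so $s_o+s_{\bar o}\in\mathcal{F}_{F+2}C(K)\setminus\mathcal{F}_{F+3}C(K)$; this cancellation is the source of the ``$+2$''. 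In particular $H(\mathcal{F}_F C(K))\to H(C(K))$ is already surjective, so $s_{min}(K)\ge F$.

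Next I would pass from these chain-level facts to the induced filtration on homology. Writing $\mathrm{gr}$ for the induced grading on $\mathit{HBN}(K)$, one has $s_{min}(K)=\min\{\mathrm{gr}(\xi):0\ne\xi\}$ and $s_{max}(K)=\max\{\mathrm{gr}(\xi):0\ne\xi\}$, so it suffices to understand the three nonzero classes $[s_o]$, $[s_{\bar o}]$, $[s_o+s_{\bar o}]$. Consider the basepoint map $X\colon \mathit{CBN}(K)\to\mathit{CBN}(K)$ given by multiplication by $x$ on the circle through the chosen point $p$: it is a filtered chain map of filtration degree $-2$, and in the diagonal basis $x\cdot a=0$, $x\cdot b=b$, so $X$ and $1+X$ are complementary idempotents sending exactly one of $[s_o],[s_{\bar o}]$ to itself and the other to $0$. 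Say $X([s_o]+[s_{\bar o}])=[s_o]$ and $(1+X)([s_o]+[s_{\bar o}])=[s_{\bar o}]$; then $\mathrm{gr}([s_o])\ge\mathrm{gr}([s_o]+[s_{\bar o}])-2$ and $\mathrm{gr}([s_{\bar o}])\ge\mathrm{gr}([s_o]+[s_{\bar o}])-2$. Together with $\mathrm{gr}([s_o]+[s_{\bar o}])\ge F+2$ from the chain level, the statement reduces to the sharpness claim $\mathrm{gr}([s_o])=\mathrm{gr}([s_{\bar o}])=F$, i.e. that neither canonical generator is homologous to a cycle of strictly higher filtration level.

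The sharpness claim is the main obstacle, and I would prove it as in the $S^3$ case, using the Frobenius trace pairing. The counit $\eta$ and the multiplication of the Bar-Natan algebra induce a filtered pairing on $\mathit{CBN}(K)$ that descends to a nondegenerate pairing on $\mathit{HBN}(K)$ (equivalently, $\mathit{HBN}(K)$ is a free rank-one module over $\f[X]/(X^2+X)$ via the basepoint action, carrying the self-pairing of that algebra up to an overall filtration shift). On the canonical generators $\langle s_o,s_o\rangle=\prod_j\eta(c_j^2)=1\ne 0$, while $\langle s_o,s_{\bar o}\rangle=\prod_j\eta(ab)=0$; the vanishing of the pairing on $\mathcal{F}_p\otimes\mathcal{F}_q$ for $p+q$ above the relevant bound, combined with $\langle s_o,s_o\rangle\ne 0$, forces $\mathrm{gr}([s_o])\le F$, hence $\mathrm{gr}([s_o])=F$, and symmetrically $\mathrm{gr}([s_{\bar o}])=F$. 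Thus $s_{min}(K)=F$; since also $\mathrm{gr}([s_o]+[s_{\bar o}])\le\mathrm{gr}([s_o])+2=F+2$ by the basepoint bound, we get $\mathrm{gr}([s_o]+[s_{\bar o}])=F+2$, so the three nonzero classes have $\mathrm{gr}$ equal to $F$, $F$, $F+2$, whence $s_{max}(K)=s_{min}(K)+2$. The only point needing care beyond the $S^3$ argument is that, because of $1$-$1$ bifurcations, the quantum grading on $\mathit{CBN}(K)$ is not of constant parity; but this is exactly why the filtration in Definition~\ref{def:smin and smax} steps by $1$, and since $K$ is a knot the generator $s_o$ sits in homological degree $0$, so the parity bookkeeping causes no trouble, as anticipated in Section~\ref{sec: invariant}.
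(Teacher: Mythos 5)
There is a genuine gap, and it sits exactly where you put the main weight: the ``sharpness claim'' $\mathrm{gr}([s_o])=F$, where $F$ is the chain-level filtration of the canonical cycle $s_o$, is false in general. Already for a local knot (so the question reduces to the $S^3$ computation): take the left-handed trefoil with its standard $3$-crossing diagram. There the (twisted) oriented resolution has $k=2$ circles, so the lowest term of $s_o$ sits in quantum filtration $F=-2+(n_+-n_-)-$(shift)$=-5$, while $s_{min}=s-1=-3$; the class $[s_o]$ is homologous to a cycle of strictly higher filtration than the canonical representative. Consequently your chain-level observation that $s_o+s_{\bar o}\in\mathcal{F}_{F+2}$ only gives $\mathrm{gr}([s_o]+[s_{\bar o}])\ge F+2$, which does not imply $\mathrm{gr}([s_o]+[s_{\bar o}])\ge \mathrm{gr}([s_o])+2$, i.e.\ it does not prove $s_{max}\ge s_{min}+2$. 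The tool you propose to force sharpness does not exist in the form you need: the Frobenius trace form pairs each resolution with itself, and there is no reason it satisfies $\langle du,v\rangle=\langle u,dv\rangle$, so it does not descend to $\mathit{HBN}(K)$; the genuine duality pairing is between $K$ and its mirror (this is what is used in Proposition \ref{prop:mirroring}), and the parenthetical claim that $\mathit{HBN}(K)$ carries the self-pairing of $\f[X]/(X^2+X)$ ``up to an overall filtration shift'' is essentially the statement being proved. The correct route for this direction (and the one the paper takes) is the conjugation involution $I$ on $\mathit{CBN}(K)$ induced by $1\mapsto 1$, $x\mapsto 1+x$ (equivalently $a\leftrightarrow b$): it is a filtered chain map commuting with $m$, $\Delta$ and $f=\mathrm{id}_V$, it is the identity on the associated graded, it swaps $[s_o]$ and $[s_{\bar o}]$, and $I-\mathrm{id}$ raises filtration by at least $2$. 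Applying it not to the canonical cycle but to a cycle $y\in\mathcal{F}_{s_{min}}C(K)$ representing $[s_o]$ (which exists by the definition of $s_{min}$ in Definition \ref{def:smin and smax}) gives $y+I(y)\in\mathcal{F}_{s_{min}+2}$ representing the nonzero class $[s_o]+[s_{\bar o}]$, hence $s_{max}\ge s_{min}+2$.

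The other half of your argument is fine and is in fact a legitimate alternative to the paper's proof of $s_{max}\le s_{min}+2$ (the paper invokes the connected sum with a local unknot, as in the $S^3$ case): a basepoint map $X$ of filtration degree $-2$ with $X([s_o]+[s_{\bar o}])\in\{[s_o],[s_{\bar o}]\}$ does give $\mathrm{gr}([s_o])\ge\mathrm{gr}([s_o]+[s_{\bar o}])-2$, which is exactly $s_{max}\le s_{min}+2$ once one knows $\mathrm{gr}([s_o])=\mathrm{gr}([s_{\bar o}])=s_{min}$ and $\mathrm{gr}([s_o]+[s_{\bar o}])=s_{max}$ (the latter follows since the sum of the two generators always has filtration grading at least that of each summand). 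Two small repairs are needed there: the basepoint must be placed on the knot itself, not at the point $p$ of Definition \ref{def:canonical generator}, which lies on $U_1$ away from the diagram and so lies on no circle of any resolution; and you should verify that multiplication by $x$ at the basepoint circle commutes with the $1$-$1$ bifurcation map, which is immediate since that map is $\mathrm{id}_V$.
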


\begin{proof}
The same argument for Proposition 2.6 in \cite{MR3189434} works here with a slight change, so we just give a sketch. 

Consider the involution $I:C(K)\to C(K)$, which is induced by the involution on the Frobenius algebra $V$: \[I(a) = b,\,\,\,\, I(b)=a.\] In terms of the basis $\left\{1,x\right\}$, it is \[I(1) = 1, \,\,\,\, I(x) = 1+x.\] So $I$ induces the identity map on the associated graded complex.  

Choose a cycle $y\in \mathcal{F}_{s_{min}(K)}C(K)$, such that  $\left\{y, I(y)\right\}$ gives a basis of $H(C(K))$. Since the lowest grading parts of $a$ and $I(a)$ are the same, and the grading non-preserving part of the map $I$ on $C(K)$ raises the grading by at least $2$, we have $y+I(y) \in \mathcal{F}_{s_{min}(K)+2}C(K)$. Hence,
\[s_{max}(K) \geq s_{min}(K) +2. \]

The proof of the inequality in the other direction follows exactly the same argument as for the $s$-invariant of the usual Bar-Natan homology, as taking connected sum with a local unknot is a local operation, and the Bar-Natan chain complex behaves in the same way as in the case of $S^3$ with respect to this local operation.
\end{proof}

Hence, we can define the Bar-Natan $s$-invariant for null homologous knot in $\rp{3}$ as usual.
\begin{definition}
	Let $K$ be a null homologous knot in $\rp{3}$. The \textbf{Bar Natan $s$-invariant } of $K$, is defined as \[s_{\rp{3}}^{BN}(K) = \dfrac{s_{min}(K) +s_{max}(K)}{2}.\]\label{def:s}
\end{definition}

It is well-defined as we have checked that the maps induced by Reidemeister moves are filtered maps of filtration degree $0$ in Proposition \ref{prop:R moves}. Also, it doesn't depend on the twisted orientation on $K$, as the effect of reversing twisted orientation is the same as switching $a$ and $b$.

This $s$-invariant $s^{BN}_{\rp{3}}$ satisfies similar properties as the usual $s$-invariants. We summarize some of them here. Compare also with corresponding statements about the $s$-invariant defined using the Lee deformation in \cite{manolescu2023rasmussen}.

\begin{proposition}
	If $K$ is a local knot in $\rp{3}$, $i.e.,$ it is contained in some ball $B^3$ in $\rp{3}$, then \[s_{\rp{3}}^{BN}(K) = s^{BN}(K),\] where $s^{BN}(K)$ denote the $s$-invariants from the Bar-Natan homology for knot in $S^3$ over the field $\mathbb{F} =\mathbb{F}_2$.
\end{proposition}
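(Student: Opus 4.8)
The plan is to reduce to the already-established case of links in $S^3$ by exhibiting an isomorphism of filtered chain complexes between $\mathit{CBN}_{*,*}(K)$ (computed from a diagram of $K$ in $\rp{2}$) and the usual Bar-Natan complex $CBN_{*,*}(K')$ of $K$ viewed as a knot in $S^3$, where $K'$ is obtained from any diagram representing $K$ inside a ball. Concretely, I would choose a diagram $D$ of $K$ in $\rp{2}$ that is supported in a disk disjoint from the essential unknot $U_1$: since $K$ is local, after isotopy (which by Proposition \ref{prop:R moves} induces filtered quasi-isomorphisms preserving canonical generators) we may assume $K$ lies entirely in a ball, so the link projection never crosses $U_1$. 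For such a diagram every crossing is a local crossing in the ordinary sense, every resolution $D_v$ consists of local unknots, and crucially no edge map is ever a $1$-$1$ bifurcation — only $m$ and $\Delta$ appear. Hence the cube of resolutions, together with its edge maps, is literally the same cube used to define the ordinary Bar-Natan complex of the $S^3$-diagram $K'$.

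The key steps, in order: (1) Reduce to a diagram $D$ supported away from $U_1$, using the Reidemeister invariance of Proposition \ref{prop:R moves} (and its filtered, canonical-generator-preserving refinement) to see that $s_{\rp{3}}^{BN}$ is unchanged. (2) Observe that for such $D$ the unadjusted complex $\mathit{CBN}^{un}_{*,*}(D)$ coincides with the unadjusted ordinary Bar-Natan complex $CBN^{un}_{*,*}(K')$, including the quantum filtration, because the Frobenius algebra $(V,m,\Delta,\iota,\eta)$ in Definition \ref{def:frobenius algebra} is exactly the $v=1$ Bar-Natan Frobenius algebra over $\mathbb{F}_2$ and no $f$ occurs. (3) Check that the global grading shifts agree: a twisted orientation on a local knot is just an honest orientation (no segment crosses $U_1$, so there is no sign flip), so the counts $n_+,n_-$ with respect to the twisted orientation equal the usual $n_+,n_-$; therefore $\mathit{CBN}_{*,*}(D) = CBN_{*,*}(K')$ as filtered bigraded complexes. (4) Match canonical generators: for a crossingless local unlink diagram the reference point $p$ on $U_1$ plays exactly the role of the point at infinity, and Definition \ref{def:assignment of a and b} reduces to the ordinary distance-to-infinity plus orientation rule, so $s_o$ equals the usual Lee/Bar-Natan canonical generator. (5) Conclude that $\mathcal{F}_q C(K)$ and the filtration on the $S^3$ complex coincide, hence $s_{min}, s_{max}$, and therefore $s_{\rp{3}}^{BN}(K) = s^{BN}(K')= s^{BN}(K)$.

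The main obstacle — really the only non-formal point — is step (1): justifying that one may isotope $K$ so that its $\rp{2}$-projection is disjoint from $U_1$, and that this isotopy is realized by the listed Reidemeister moves $R$-$I$ through $R$-$V$ in a way that the induced maps on $\mathit{HBN}$ are filtered isomorphisms sending canonical generators to canonical generators. The move $R$-$V$ (pushing an arc through $U_1$) will typically be needed and, as noted after Proposition \ref{prop:R moves}, it reverses the twisted orientation on the moved arc; one must track that this is harmless since globally it just corresponds to a different but still valid twisted orientation, and $s_{\rp{3}}^{BN}$ is independent of the twisted orientation. Once the diagram is pushed off $U_1$ everything else is a direct identification of chain complexes, filtrations, and distinguished classes, with no calculation required.
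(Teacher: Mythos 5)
Your proposal is correct and follows essentially the same route as the paper: choose a diagram of the local knot disjoint from $U_1$, note that twisted orientations become ordinary orientations, that no $1$-$1$ bifurcation occurs so the filtered complex and grading shifts coincide with the ordinary Bar-Natan complex, and conclude the $s$-invariants agree. Your ``main obstacle'' in step (1) is in fact lighter than you suggest, since $K$ already lies in a ball by hypothesis, so one may simply choose the projection to miss $U_1$ and invoke the already-established diagram-independence of $s^{BN}_{\rp{3}}$ from Proposition \ref{prop:R moves}, exactly as the paper does implicitly.
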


\begin{proof}
	A local knot is, in particular, null homologous, so we can define $s^{BN}_{\rp{3}}(K)$ for it. For a local knot, the notion of twisted orientation agrees with the usual notion of orientation, as we can draw a knot diagram of it that doesn't intersect the essential unknot $U_1$ at all, so no reversing of the arrow is needed. Therefore, the notion of $n_+$ and $n_-$ with respect to the twisted orientation agrees with the usual notion of $n_+$ and $n_-$. Also, there will be no $1$-$1$ bifurcation appearing in the Bar-Natan chain complex for the knot diagram away from $U_1$. Thus, the notion $s_{\rp{3}}^{BN}$ matches exactly with the usual notion of $s^{BN}(K)$ viewing $K$ as a knot in $S^3$.
\end{proof}

\begin{proposition}
	Let $m(K)$ be the mirror of a null homologous knot in $\rp{3}$, $i.e.$, it is obtained by switching positive crossings with negative crossings in a knot diagram of $K$. Then 
	\label{prop:mirroring}
	\[s^{BN}_{\rp{3}}(m(K)) = -s^{BN}_{\rp{3}}(K).\]
\end{proposition}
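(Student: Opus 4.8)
The plan is to run the standard duality argument for the $s$-invariant, with "orientation" systematically replaced by "twisted orientation". The starting point is the observation that mirroring a diagram $D$ of $K$ in $\rp{2}$ — switching every crossing — interchanges the $0$- and $1$-smoothings at each crossing, and hence identifies the cube of resolutions of $m(D)$ with the cube of $D$ turned upside down. Equip $V$ with the inner product $\langle 1,x\rangle=1$, $\langle 1,1\rangle=\langle x,x\rangle=0$; with respect to it $m$ and $\Delta$ are mutually adjoint, the unit $\iota$ and counit $\eta$ are mutually adjoint, and the $1$-$1$ bifurcation map $f=id_V$ is self-adjoint (trivially, the identity is self-adjoint for any nondegenerate pairing). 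Taking $\mathbb{F}$-linear duals of all vector spaces and all edge maps in the cube therefore produces exactly the Bar-Natan cube of the mirror diagram, now placed in negated homological and negated quantum grading. Since mirroring never moves the diagram across the essential unknot $U_1$, the notion of twisted orientation and the bifurcation type in each resolution are unchanged, and for any twisted orientation $o$ one has $n_+(m(D),o)=n_-(D,o)$ and $n_-(m(D),o)=n_+(D,o)$. Combining this with the grading reversal and the global shifts in Definition \ref{def:Bar-Natan complex} gives a canonical identification of finite-length filtered chain complexes
\[\mathit{CBN}_{i,j}(m(K)) \;\cong\; \mathrm{Hom}_{\mathbb{F}}\bigl(\mathit{CBN}_{-i,\,c-j}(K),\,\mathbb{F}\bigr)\]
for some integer $c$ depending only on the combinatorics of $D$. (The only feature absent in the $S^3$ story is that the quantum grading parity is not constant and the filtration steps by $1$; this is harmless, as the displayed isomorphism is between the same kind of filtered complexes.)

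Next I would feed this identification into Definition \ref{def:smin and smax}. The pairing above is graded-perfect, so the annihilator of $\mathcal{F}_q\mathit{CBN}(K)$ inside $\mathit{CBN}(m(K))$ is exactly $\mathcal{F}_{c-q+1}\mathit{CBN}(m(K))$; dually, $(\mathcal{F}_q\mathit{CBN}(K))^\ast \cong \mathit{CBN}(m(K))/\mathcal{F}_{c-q+1}\mathit{CBN}(m(K))$ and $\bigl(\mathit{CBN}(K)/\mathcal{F}_q\mathit{CBN}(K)\bigr)^\ast \cong \mathcal{F}_{c-q+1}\mathit{CBN}(m(K))$, with the inclusion/quotient maps swapped. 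Since $H(\mathit{CBN}(K))$ is $2$-dimensional and the induced pairing $H(\mathit{CBN}(K))\otimes H(\mathit{CBN}(m(K)))\to\mathbb{F}$ is perfect over the field $\mathbb{F}$, dualizing the long exact sequences of the pairs $(\mathit{CBN}(K),\mathcal{F}_q)$ and $(\mathit{CBN}(m(K)),\mathcal{F}_{c-q+1})$ shows that "$i_\ast\colon H(\mathcal{F}_q\mathit{CBN}(K))\to H(\mathit{CBN}(K))$ is surjective" is equivalent to "$i_\ast\colon H(\mathcal{F}_{c-q+1}\mathit{CBN}(m(K)))\to H(\mathit{CBN}(m(K)))$ is the zero map", and "$i_\ast$ nonzero in grading $q$ for $K$" is equivalent to "$i_\ast$ not surjective in grading $c-q+1$ for $m(K)$". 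Reading off the max's in Definition \ref{def:smin and smax} then yields $s_{max}(m(K)) = c - s_{min}(K)$ and $s_{min}(m(K)) = c - s_{max}(K)$, hence
\[s^{BN}_{\rp{3}}(m(K)) = \frac{s_{min}(m(K))+s_{max}(m(K))}{2} = c - s^{BN}_{\rp{3}}(K).\]
Finally, to see $c=0$, apply the formula to the unknot $U$ in $\rp{3}$: since $m(U)=U$ we get $c = 2\,s^{BN}_{\rp{3}}(U)$, and because $U$ is a local knot the local-knot proposition above identifies $s^{BN}_{\rp{3}}(U)$ with $s^{BN}(U)=0$ (computed directly from the one-object complex $\mathit{CBN}(U)=V$ with no shift: $s_{min}=-1$, $s_{max}=1$). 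Thus $c=0$ and $s^{BN}_{\rp{3}}(m(K)) = -s^{BN}_{\rp{3}}(K)$.

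The only genuinely new bookkeeping compared with $S^3$ is verifying the grading-reversal duality in the presence of the $1$-$1$ bifurcation and the non-constant parity of the quantum grading; this is where I expect the main, though routine, effort to lie. As noted, it is disarmed by the single observation that $f=id_V$ is self-adjoint for the standard inner product, so the dual of the Bar-Natan cube is again a Bar-Natan cube (of the mirror), and everything else — twisted orientations, canonical generators $s_o$ pairing nondegenerately by Proposition \ref{prop:canonical generator} and the duality $a\leftrightarrow b$ in $V$, and the filtered invariance of all Reidemeister maps from Proposition \ref{prop:R moves} — transfers verbatim from the $S^3$ argument.
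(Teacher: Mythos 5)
Your overall route is the same as the paper's: dualize the Bar-Natan cube as in Rasmussen's argument in $S^3$, the one new ingredient being that the $1$-$1$ bifurcation map $f=\mathrm{id}_V$ is self-adjoint, so the dual of the Bar-Natan cube of $D$ (with arrows reversed and gradings negated) is again a Bar-Natan cube, namely that of $m(D)$. Your adjointness checks are correct: with the pairing $\langle 1,x\rangle=1$, $\langle 1,1\rangle=\langle x,x\rangle=0$ the maps $m$ and $\Delta$ of Definition \ref{def:frobenius algebra} are mutually adjoint, $\iota$ and $\eta$ are mutually adjoint, and $f^*=\mathrm{id}$; likewise the translation between ``$i_*$ surjective/nonzero for $K$ in filtration level $q$'' and ``$i_*$ zero/not surjective for $m(K)$ in the complementary level'' via annihilators of the filtration is the standard argument and is fine.

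The gap is in how you determine the offset $c$. You only assert $\mathit{CBN}_{i,j}(m(K))\cong\mathrm{Hom}_{\mathbb{F}}(\mathit{CBN}_{-i,\,c-j}(K),\mathbb{F})$ ``for some integer $c$ depending only on the combinatorics of $D$,'' and then try to pin $c$ down by evaluating the resulting identity $s^{BN}_{\rp{3}}(m(K))=c-s^{BN}_{\rp{3}}(K)$ on the unknot. That normalization step is not valid as stated: it determines $c$ only for the unknot's diagram. From your identity one can conclude that $c=s^{BN}_{\rp{3}}(m(K))+s^{BN}_{\rp{3}}(K)$ is a knot invariant, but not that it is the same integer for every knot, so computing it for $U$ says nothing about a general $K$; and if $c$ really did vary with the diagram or the knot, the proposition would fail. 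The missing (routine but essential) step is exactly the shift bookkeeping you deferred: dualizing the unadjusted complexes of Definition \ref{def:unadjusted chain complex} gives $\mathit{CBN}^{un}_{i,j}(m(D))\cong\bigl(\mathit{CBN}^{un}_{n-i,\,n-j}(D)\bigr)^{*}$, and then the global shifts of Definition \ref{def:Bar-Natan complex}, together with $n_{\pm}(m(D))=n_{\mp}(D)$ (computed with the twisted orientation) and $n=n_{+}+n_{-}$, cancel exactly to give $\mathit{CBN}_{i,j}(m(K))\cong\bigl(\mathit{CBN}_{-i,-j}(K)\bigr)^{*}$, i.e. $c=0$ universally, with no appeal to the unknot. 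With that computation inserted, your argument closes and coincides with the paper's proof, which invokes the dual-complex argument of Proposition 3.9 of \cite{MR2729272} together with the observation $f^{*}=\mathrm{id}_{V^{*}}$.
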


\begin{proof}
	The usual argument as in Proposition 3.9 of \cite{MR2729272} using the dual chain complex works here as well, with the dual map of $f = id_{V}$ being the identity $f^* = id_{V^*}$ on $V^{*}$. 
\end{proof}

\begin{proposition}
	If $K$ is a null homologous knot in $\rp{3}$ and $K_l$ is a local knot in $\rp{3}$, then we can form the connected sum \[K\#K_l\subset \rp{3}\#S^3 \cong \rp{3},\] and we have \[s^{BN}_{\rp{3}}(K\#K_l) = s^{BN}_{\rp{3}}(K) + s^{BN}(K_l),\]
	where again $s^{BN}(K_l)$ is the $s$-invariant of $K_l$ using the Bar-Natan homology in $S^3$ over the field $\mathbb{F} = \mathbb{F}_2$.
	\label{prop:local connected sum}
\end{proposition}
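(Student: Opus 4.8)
The plan is to reduce the statement to the connected-sum formula for the $s$-invariant of Bar-Natan homology in $S^3$, exploiting that forming $K\#K_l$ is a local operation supported in a ball $B^3\subset\rp{3}$ that can be taken disjoint from the essential unknot $U_1$. Fix a diagram $D$ of $K$ in $\rp{2}$ and a basepoint $p$ on $U_1$ disjoint from $D$; choose the connected-sum point on an arc of $D$ contained in a small innermost disk $\Delta\subset\rp{2}\setminus U_1$, and let $D_l$ be a diagram of $K_l$ drawn inside $\Delta$. Matching the twisted orientation $o$ on $K$ with an orientation $o_l$ on $K_l$ (an honest twisted orientation, as $K_l$ is local) across the splice produces a twisted orientation $o\#o_l$ on the connected-sum diagram $D\#D_l$, with $n_\pm(D\#D_l)=n_\pm(D)+n_\pm(D_l)$ counted with respect to these twisted orientations.

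The first step is the chain-level identification. In each resolution, splicing $D_l$ into $D$ merges the circle of $D_v$ carrying the arc with the circle of $(D_l)_w$ carrying it, so there is a bigraded identification
\[
\mathit{CBN}(D\#D_l)\;\cong\;\mathit{CBN}(D)\otimes_V\mathit{CBN}(D_l),
\]
where $V$ acts on each tensor factor through that marked circle and $\mathit{CBN}(D_l)$ is, since $\Delta$ is disjoint from $U_1$, literally the usual $S^3$ Bar-Natan complex of $K_l$ (no $1$-$1$ bifurcation ever occurs inside $\Delta$). The edge maps of $\mathit{CBN}(D)$ coming from $1$-$1$ bifurcations, namely $f=\mathrm{id}_V$ (Lemma \ref{lem:1-1bifurcation}), cause no difficulty: $-\otimes_V\mathit{CBN}(D_l)$ is functorial and $f$ is $V$-linear. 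This identification respects the quantum filtrations and is compatible with the global shifts of Definition \ref{def:Bar-Natan complex} by additivity of $n_\pm$, and under it the canonical generator $s_{o\#o_l}$ corresponds to $s_o\otimes_V s_{o_l}$ once the twisted orientations are chosen with matching basepoint labels ($a$ or $b$, in the notation of Definition \ref{def:canonical generator}).

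Passing to homology is where I would invoke that $V=\mathbb{F}_2\langle 1,x\rangle$ with $x^2=x$ is isomorphic to $\mathbb{F}_2\times\mathbb{F}_2$ via the idempotents $a=1+x$, $b=x$, hence semisimple, so that $-\otimes_V-$ is exact and a Künneth argument over $V$ gives an isomorphism of filtered vector spaces
\[
\mathit{HBN}(K\#K_l)\;\cong\;\mathit{HBN}(K)\otimes_V\mathit{HBN}(K_l).
\]
Now $\mathit{HBN}(K_l)$ is the $S^3$ Bar-Natan homology of $K_l$, and as a filtered $V$-module it is free of rank one: the canonical generators $s_{o_l},s_{\bar o_l}$ sit in filtration level $s_{min}(K_l)$ and $s_{o_l}+s_{\bar o_l}$ sits in filtration level $s_{max}(K_l)=s_{min}(K_l)+2$, so $\mathit{HBN}(K_l)$ is $V$ with the quantum filtration shifted up by $s^{BN}(K_l)=s_{min}(K_l)+1$. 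Therefore $\mathit{HBN}(K\#K_l)\cong\mathit{HBN}(K)\otimes_V V$ with the filtration shifted up by $s^{BN}(K_l)$, i.e. $\mathit{HBN}(K\#K_l)$ is $\mathit{HBN}(K)$ with the quantum filtration shifted up by $s^{BN}(K_l)$. Hence $s_{min}(K\#K_l)=s_{min}(K)+s^{BN}(K_l)$ and $s_{max}(K\#K_l)=s_{max}(K)+s^{BN}(K_l)$, and averaging yields $s^{BN}_{\rp{3}}(K\#K_l)=s^{BN}_{\rp{3}}(K)+s^{BN}(K_l)$.

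The step needing the most care is the filtered Künneth identification over $V$, and specifically that the quantum filtration on $\mathit{HBN}(K\#K_l)$ computed from $\mathit{CBN}(D)\otimes_V\mathit{CBN}(D_l)$ is genuinely a shift of the one on $\mathit{HBN}(K)$; over the semisimple ring $V$ this is standard, and it is exactly the $S^3$ computation recalled in Section 2 of \cite{MR3189434}, the only $\rp{3}$-specific input being that the connected-sum region is disjoint from $U_1$. As a cross-check one may instead argue directly with the Euler-characteristic-$(-1)$ saddle cobordisms $K\sqcup K_l\to K\#K_l$ and its reverse via Proposition \ref{prop:bound on Euler characteristics}: these place $s_{min}(K\#K_l)$ within $1$ of $s_{min}(K)+s_{min}(K_l)$, and the observation that the merge identifies the two basepoint circles of $s_o$ and $s_{o_l}$ (each labeled $a$, so contributing one leading factor $x$ instead of two) pins $s_{min}(K\#K_l)$ at $s_{min}(K)+s_{min}(K_l)+1$, again giving the formula.
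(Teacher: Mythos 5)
Your strategy (connected sum as a tensor product over the Frobenius algebra $V$, then a filtered K\"unneth) is genuinely different from the paper's, whose proof is a one-line transport of Rasmussen's $S^3$ argument from \cite{MR2729272}, the only new input being that a twisted orientation on $K$ and an ordinary orientation on $K_l$ splice to a twisted orientation on $K\#K_l$. But your version has a genuine gap at exactly the step you flag: the filtered K\"unneth over $V$. Semisimplicity of $V\cong\mathbb{F}_2\times\mathbb{F}_2$ gives exactness of $\otimes_V$ and hence the K\"unneth isomorphism of ungraded vector spaces, but it says nothing about the quantum filtration, which is the only thing $s$ sees. The idempotents $a,b$ are not homogeneous, the associated graded algebra $\mathbb{F}_2[x]/(x^2)$ is \emph{not} semisimple (this is precisely why Khovanov homology of a connected sum is only a derived tensor product over it), and in general the induced filtration on $H(C\otimes_V C')$ is not formally determined by the filtered $H(C)$ and $H(C')$: one needs a $V$-equivariant filtered simplification of the complexes, or explicit filtered maps in both directions, neither of which you supply. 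Section 2 of \cite{MR3189434} recalls the definition of $s$ over $\mathbb{F}_2$ and the genus bound, but contains no such filtered K\"unneth, so the citation does not close the gap. Relatedly, the assertion that the chain-level identification ``respects the quantum filtrations \dots by additivity of $n_\pm$'' misplaces the bookkeeping: the discrepancy comes from the merged circle, not the writhe. Already for two unknots, $[1\otimes 1]\in V\otimes_V V$ has quotient filtration level $2$ while its image $1\in V=\mathit{CBN}(U\#U)$ has level $1$, and $x\otimes x=x\otimes 1$ in $V\otimes_V V$ jumps level; such shifts cannot be waved away when the goal is to compute a filtration-defined invariant. (Your identification of $\mathit{HBN}(K_l)$ as a free rank-one filtered $V$-module with generators at levels $s_{min},s_{min},s_{min}+2$ is correct, but it only helps once the filtered K\"unneth is in place.)

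Your closing cross-check is much closer to the paper's intended proof, but as sketched it is also incomplete. The two saddle cobordisms only pin $s_{min}(K\#K_l)$ within $1$ of $s_{min}(K)+s_{min}(K_l)$, and the proposed tie-breaker conflates the filtration level of the canonical \emph{cycle} $s_{o\#o_l}$ with the level of its homology \emph{class}: except for positive diagrams, the cycle $s_o$ sits well below $s_{min}(K)$, so counting ``one leading factor $x$ instead of two'' pins nothing at the class level, and lower-bounding $s_{min}$ requires a statement about \emph{all} classes (surjectivity of $H(\mathcal{F}_q)\to H$), not one generator. To finish along these lines you need the full Rasmussen bookkeeping — both saddle maps, the behaviour of all canonical generators, and the mirror formula of Proposition \ref{prop:mirroring} to convert the one-sided inequality into equality — together with your (correct) splicing remark for twisted orientations; that is, in essence, the proof the paper gives.
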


\begin{proof}
	The same proof for Proposition 3.11 in \cite{MR2729272} works. Note that a twisted orientation on $K$ and an orientation on $K_l$ gives a twisted orientation on $K\#K_l$, if they are compatible on the two arcs where the connected sum is performed.
\end{proof}
\begin{proposition}
	If $K$ is positive with respect to the twisted orientation, $i.e.$ $n_-=0$ counted using a twisted orientation on $K$, then we have the usual formula for its $s^{BN}_{\rp{3}}$-invariant: \[s^{BN}_{\rp{3}}(K) = -k+n+1,\] where $n$ is the number of crossings of $K$, and $k$ is the number of circle in the twisted oriented resolution of $K$.  
	\label{prop:positive}
\end{proposition}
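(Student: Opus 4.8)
The plan is to compute $s_{min}(K)$ directly and then invoke the relation $s_{max}(K)=s_{min}(K)+2$, which gives $s^{BN}_{\rp{3}}(K)=s_{min}(K)+1$. Fix a diagram $D$ of $K$ realizing the twisted orientation, so that $n_-=0$ and $n_+=n$. Since every crossing is positive with respect to the twisted orientation, the twisted oriented resolution $D_o$ is the all--$0$ resolution $D_{\vec 0}$, which by hypothesis has $k$ circles. In $\mathit{CBN}^{un}(D)$ the only vertex in homological degree $0$ is $\vec 0$, so the homological degree $0$ summand is exactly $C(D_{\vec 0})=V^{\otimes k}$; after the global shift $\{-n_-\}[n_+-2n_-]=\{0\}[n]$ this becomes $\mathit{CBN}_{0,*}(K)=V^{\otimes k}[n]$, still in homological grading $0$. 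In particular every nonzero element of $\mathit{CBN}_{0,*}(K)$ lies in $\mathcal F_{n-k}$, the minimal quantum grading $n-k$ being attained only by the tensor in which every circle carries $x$.

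Next I would use that $\mathit{HBN}(K)$ is concentrated in homological grading $0$ for a knot (as observed earlier: the $S^3$ proof that $s_o$ sits in homological grading $0$ carries over verbatim once we use the twisted orientation to define $n_\pm$ and to form the resolution). Hence $H(C(K))=\ker(d_0\colon \mathit{CBN}_{0,*}(K)\to \mathit{CBN}_{1,*}(K))$ and there are no boundaries in degree $0$, so for every $q$ the map $H(\mathcal F_qC(K))\to H(C(K))$ is simply the inclusion $\ker d_0\cap\mathcal F_q\mathit{CBN}_{0,*}(K)\hookrightarrow \ker d_0$; it is surjective exactly when $\ker d_0\subseteq\mathcal F_q\mathit{CBN}_{0,*}(K)$. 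Thus $s_{min}(K)$ is the largest $q$ with $\ker d_0\subseteq\mathcal F_q\mathit{CBN}_{0,*}(K)$, i.e.\ the minimum over nonzero $c\in\ker d_0$ of the largest $q$ with $c\in\mathcal F_qC(K)$. On the one hand $\ker d_0\subseteq\mathit{CBN}_{0,*}(K)\subseteq\mathcal F_{n-k}C(K)$, so $s_{min}(K)\ge n-k$. On the other hand $s_o$ is a cycle in homological grading $0$ by Proposition~\ref{prop:canonical generator}, hence a nonzero element of $\ker d_0$; writing $s_o$ as a tensor of labels $a=1+x$ and $b=x$ over the $k$ circles of $D_o$, the summand in which every circle is labelled $x$ has coefficient $1$, so $s_o$ has a nonzero component in quantum grading $n-k$ and therefore $s_o\notin\mathcal F_{n-k+1}C(K)$. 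This forces $s_{min}(K)\le n-k$, whence $s_{min}(K)=n-k$ and $s^{BN}_{\rp{3}}(K)=n-k+1=-k+n+1$.

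I do not expect a serious obstacle: once the shape of the complex near homological degree $0$ is fixed, this is the verbatim $S^3$ computation for positive knots as in Section~2 of \cite{MR3189434}. The one conceptual point worth stressing is that the $1$--$1$ bifurcations never interfere, because the entire computation is carried out in homological grading $0$, where the chain group $V^{\otimes k}$ is purely local and no bifurcation map is applied; the $1$--$1$ bifurcations occur only inside $d_0$, which the argument does not need to know. The remaining points demanding care are bookkeeping: matching the sign convention so that the oriented resolution of a positive crossing is the $0$--smoothing (so that $D_o=D_{\vec 0}$ and, with $\{-n_-\}=\{0\}$, it sits in homological grading $0$), keeping the direction of the quantum filtration straight, and citing the homological--grading--$0$ concentration of $\mathit{HBN}(K)$ for knots.
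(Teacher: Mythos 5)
Correct, and essentially the paper's own proof: the paper simply points to Section 5.2 of \cite{MR2729272} with orientation replaced by twisted orientation, and your write-up is precisely that computation adapted — the twisted oriented resolution of a positive diagram is the all-$0$ vertex, the only one in homological degree $0$, so with no boundaries there $s_{min}(K)$ equals the filtration level of $s_o$, namely $n-k$ (forced by its nonzero $x^{\otimes k}$-component), and the lemma $s_{max}=s_{min}+2$ then gives $s^{BN}_{\rp{3}}(K)=-k+n+1$. Your added observation that the $1$-$1$ bifurcations never enter the homological-degree-$0$ part of the computation is exactly the reason the cited $S^3$ argument carries over verbatim.
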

\begin{proof}
	Exactly the same proof as in Section 5.2 of \cite{MR2729272} works as well, changing everywhere the word 'orientation' by 'twisted orientation'.
\end{proof}

Now we discuss the genus bound one can obtain using the $s$-invariant $s^{BN}_{\rp{3}}$. 

\begin{definition}
		\label{def:twisted orientable slice surface}
	Let $K$ be a null homologous knot in $\rp{3}$. A surface $\Sigma$ in $\rp{3} \times I$ is a \textbf{twisted orientable slice surface} of $K$ if it is connected, twisted orientable, and \[\partial \Sigma =\Sigma \cap (\rp{3}\times \left\{0\right\}) =K.\]
\end{definition}

The most straightforward result one can write down is in terms of the Euler characteristics.

\begin{proposition}
	If $\Sigma $ is a twisted orientable slice surface of a null homologous knot $K$ in $\rp{3}$, then \[-\chi(\Sigma)\geq \abs{s^{BN}_{\rp{3}}(K)}.\]
\end{proposition}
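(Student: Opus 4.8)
The plan is to run the standard Rasmussen-type argument with \emph{twisted} orientations in place of orientations, with Corollary \ref{coro:chi bound} as the only geometric input. Write $C(K)=\mathit{CBN}_{*,*}(K)$ and, for $0\neq y\in\mathit{HBN}(K)$, set $\mathrm{gr}(y)=\max\{q\in\mathbb Z\mid y\in\mathrm{im}(H(\mathcal F_qC(K))\to H(C(K)))\}$, so that $s_{min}(K)=\min_{y\neq0}\mathrm{gr}(y)$, $s_{max}(K)=\max_{y\neq0}\mathrm{gr}(y)$, and any filtered chain map of filtration degree $d$ sends $\mathcal F_qC\to\mathcal F_{q+d}C'$ and hence satisfies $\mathrm{gr}(\varphi(y))\geq\mathrm{gr}(y)+d$. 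First I would record the behaviour of the canonical generators under $\mathrm{gr}$. Since $\mathit{HBN}(K)$ is $2$-dimensional over $\mathbb F_2$ it has exactly three nonzero elements: the two canonical generators $[s_o]$, $[s_{\bar o}]$ attached to the two twisted orientations of the knot $K$ ($\bar o$ the reversal of $o$), and their sum. The involution $I$ with $I(a)=b$, $I(b)=a$ satisfies $I(s_o)=s_{\bar o}$, because reversing $o$ flips every label $a\leftrightarrow b$ by Definition \ref{def:assignment of a and b}, and $I$ is the identity on the associated graded; therefore $\mathrm{gr}([s_o])=\mathrm{gr}([s_{\bar o}])$ while $\mathrm{gr}([s_o]+[s_{\bar o}])\geq\mathrm{gr}([s_o])+2$ (the two share the leading term, a tensor of copies of $x$, which cancels over $\mathbb F_2$). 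Matching these with the three values $s_{min}(K),s_{min}(K),s_{max}(K)$ forces $\mathrm{gr}([s_o])=\mathrm{gr}([s_{\bar o}])=s_{min}(K)$ and $\mathrm{gr}([s_o]+[s_{\bar o}])=s_{max}(K)$. The same computation for the trivial unknot $U$, whose complex is $V$ with no shifts, gives $s_{min}(U)=-1$, $s_{max}(U)=1$, so $s^{BN}_{\rp{3}}(U)=0$.

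Next, since $L'=U$ is the trivial unknot, $\Sigma\colon K\to U$ is a connected twisted orientable cobordism between null homologous knots, so Corollary \ref{coro:chi bound} gives a filtered isomorphism $F_\Sigma\colon\mathit{HBN}(K)\to\mathit{HBN}(U)$ of filtration degree $\chi(\Sigma)$ with $F_\Sigma([s_o])=[s_{o'}]$, where $o'$ is the restriction to $U$ of the unique twisted orientation of $\Sigma$ extending $o$; reversing $o$ reverses that extension, so $F_\Sigma([s_{\bar o}])=[s_{\bar o'}]$ and by linearity $F_\Sigma([s_o]+[s_{\bar o}])=[s_{o'}]+[s_{\bar o'}]$. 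Applying $\mathrm{gr}$ and the shift inequality for filtered maps,
\[ s_{max}(U)\ \geq\ \mathrm{gr}\big([s_{o'}]+[s_{\bar o'}]\big)\ =\ \mathrm{gr}\big(F_\Sigma([s_o]+[s_{\bar o}])\big)\ \geq\ \mathrm{gr}\big([s_o]+[s_{\bar o}]\big)+\chi(\Sigma)\ =\ s_{max}(K)+\chi(\Sigma), \]
so $s_{max}(K)\leq 1-\chi(\Sigma)$, whence $s^{BN}_{\rp{3}}(K)=\tfrac12(s_{min}(K)+s_{max}(K))=s_{max}(K)-1\leq-\chi(\Sigma)$. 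Turning $\Sigma$ upside down gives a connected twisted orientable cobordism $\overline\Sigma\colon U\to K$ with $\chi(\overline\Sigma)=\chi(\Sigma)$ and $F_{\overline\Sigma}([s_{o'}]+[s_{\bar o'}])=[s_o]+[s_{\bar o}]$; the same estimate, now starting from the element of grading $s_{max}(U)=1$ in $\mathit{HBN}(U)$, yields $s_{max}(K)\geq 1+\chi(\Sigma)$, i.e. $s^{BN}_{\rp{3}}(K)\geq\chi(\Sigma)$. (Alternatively this second inequality is immediate from Proposition \ref{prop:mirroring} applied to $m(K)$ with the mirror slice surface $m(\Sigma)$, which has the same Euler characteristic and is again twisted orientable.) Combining the two bounds gives $\chi(\Sigma)\leq s^{BN}_{\rp{3}}(K)\leq-\chi(\Sigma)$, i.e. $-\chi(\Sigma)\geq\abs{s^{BN}_{\rp{3}}(K)}$.

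Most of this is bookkeeping that is purely formal: that $\bar o$ is again a twisted orientation, that restriction and reversal of twisted orientations are compatible along the connected surface $\Sigma$ (so that ``the unique extension of $o$'' makes sense and its reversal is the extension of $\bar o$), and that filtered maps shift $\mathrm{gr}$; the nontrivial statements $F_\Sigma([s_o])=[s_{o'}]$ and ``filtration degree $=\chi(\Sigma)$'' are exactly Corollary \ref{coro:chi bound}. The one place that genuinely requires care — and what I would flag as the main obstacle — is the interaction of the canonical generators with the quantum filtration, namely that $[s_o]+[s_{\bar o}]$ realizes $s_{max}(K)$ (its image realizing at most $s_{max}(U)$ is trivial). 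This is the ``$I(y)$ trick'' already used in the proof that $s_{max}=s_{min}+2$; over $\mathbb F_2$ it is pinned down completely by counting the three nonzero classes of $\mathit{HBN}(K)$, so I expect no further difficulty.
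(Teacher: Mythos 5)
Your proposal is correct and is essentially the paper's own argument: the paper just points to the standard Rasmussen-type proof applied to Corollary \ref{coro:chi bound} together with $s^{BN}_{\rp{3}}(U)=0$, and what you spell out --- identifying $\mathrm{gr}([s_o])=s_{min}(K)$ and $\mathrm{gr}([s_o]+[s_{\bar o}])=s_{max}(K)$ via the involution $I$, then pushing these classes through the filtered isomorphism of degree $\chi(\Sigma)$ in both directions --- is exactly that argument made explicit. One minor remark: like the paper's proof (and the phrasing of Theorem \ref{thm:genus bound}), you treat the slice surface as a cobordism from $K$ to the trivial unknot, which is the intended usage here even though Definition \ref{def:twisted orientable slice surface} literally requires $\partial\Sigma=K$ (otherwise one would puncture $\Sigma$ and the bound would read $1-\chi(\Sigma)$).
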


\begin{proof}
	It is a straightforward corollary of Corollary \ref{coro:chi bound} applied to cobordisms from $K$ to the trivial unknot $U$, and the fact $s^{BN}_{\rp{3}}(U) =0$. Check Corollary 2.7 in \cite{MR3189434} for a detailed explanation.
\end{proof}

The genus bound will depend on whether the twisted orientable slice surface is orientable or not in the usual sense, as the formulas for the Euler characteristic from the genus differ in these two case.

\begin{corollary}
	If $\Sigma $ is a twisted orientable slice surface of a null homologous knot $K$ in $\rp{3}$, then 
	
	\begin{equation*}
		g(\Sigma)\geq 
		\begin{cases}
			&\abs{s^{BN}_{\rp{3}}(K)},  \,\,\,\,\,\,\,\,\,\text{ if $\Sigma$ is unorientable;}\\
			&\dfrac{1}{2}\abs{s^{BN}_{\rp{3}}(K)},\,\,\,\,  \text{ if $\Sigma$ is orientable.}\\
		\end{cases}       
	\end{equation*}
\end{corollary}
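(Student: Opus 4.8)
The plan is to read the genus bound off directly from the preceding Proposition, $-\chi(\Sigma)\geq\abs{s^{BN}_{\rp{3}}(K)}$, by substituting the classification-of-surfaces relation between the Euler characteristic of $\Sigma$ and its genus. The only real choice is that this relation depends on whether $\Sigma$ is orientable in the usual sense, which is exactly the dichotomy in the statement; by Example \ref{ex:twisted orientable} both alternatives genuinely occur among twisted orientable slice surfaces, so both cases of the corollary are needed.

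Concretely, by Definition \ref{def:twisted orientable slice surface} a twisted orientable slice surface $\Sigma$ of $K$ is connected with $\partial\Sigma=K$, so it has exactly one boundary circle. If $\Sigma$ is orientable, then $\chi(\Sigma)=1-2g(\Sigma)$, so the Proposition gives $2g(\Sigma)-1\geq\abs{s^{BN}_{\rp{3}}(K)}$ and hence $g(\Sigma)\geq\tfrac{1}{2}\abs{s^{BN}_{\rp{3}}(K)}$. If $\Sigma$ is unorientable, then writing $g(\Sigma)$ for its non-orientable genus (the number of cross-caps) we have $\chi(\Sigma)=1-g(\Sigma)$, so the Proposition gives $g(\Sigma)-1\geq\abs{s^{BN}_{\rp{3}}(K)}$ and hence $g(\Sigma)\geq\abs{s^{BN}_{\rp{3}}(K)}$. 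This is precisely the asserted case distinction.

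It is worth recalling where the Proposition itself comes from, since that is where all the content sits: one removes a small interior disk from $\Sigma$ to view it as a connected twisted orientable cobordism $\Sigma_0\colon K\to U$ onto the trivial unknot, applies Corollary \ref{coro:chi bound} to obtain a filtered isomorphism $F_{\Sigma_0}\colon\mathit{HBN}(K)\to\mathit{HBN}(U)$ of filtration degree $\chi(\Sigma_0)$ carrying canonical generators to canonical generators, and then compares, exactly as in the $S^3$ case, the filtration levels of these generators against $s^{BN}_{\rp{3}}(U)=0$; the absolute value in the bound is produced by also running the argument for the mirror, via Proposition \ref{prop:mirroring}. Given that machinery, the present corollary presents no obstacle — it is pure bookkeeping with the Euler characteristic — and the only point one must keep straight is that a slice surface has a single boundary circle, so the correct genus normalizations are the ones used above rather than the two-boundary-component ones.
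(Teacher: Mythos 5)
Your proof is correct and is exactly the argument the paper intends: the corollary is stated without proof as immediate bookkeeping from the preceding Proposition $-\chi(\Sigma)\geq\abs{s^{BN}_{\rp{3}}(K)}$, using $\chi=1-2g$ (orientable, one boundary circle) and $\chi=1-g$ (unorientable, cross-cap genus, one boundary circle). Your recap of where the Proposition comes from matches the paper's proof of that Proposition as well, so there is nothing to correct.
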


As the class of twisted orientable slice surface is different from the usual notion of slice genus in $\rp{3}\times I$, it is natural to expect that this $s^{BN}_{\rp{3}}(K)$ gives different information than the $s$-invariant defined for knots in $\rp{3}$ using the Lee deformation as in \cite{manolescu2023rasmussen}. We illustrate the difference in the following example.
		\begin{figure}[h]
	\[{
		\fontsize{7pt}{9pt}\selectfont
		\def\svgscale{0.6}
\begingroup%
  \makeatletter%
  \providecommand\color[2][]{%
    \errmessage{(Inkscape) Color is used for the text in Inkscape, but the package 'color.sty' is not loaded}%
    \renewcommand\color[2][]{}%
  }%
  \providecommand\transparent[1]{%
    \errmessage{(Inkscape) Transparency is used (non-zero) for the text in Inkscape, but the package 'transparent.sty' is not loaded}%
    \renewcommand\transparent[1]{}%
  }%
  \providecommand\rotatebox[2]{#2}%
  \newcommand*\fsize{\dimexpr\f@size pt\relax}%
  \newcommand*\lineheight[1]{\fontsize{\fsize}{#1\fsize}\selectfont}%
  \ifx\svgwidth\undefined%
    \setlength{\unitlength}{687.44438144bp}%
    \ifx\svgscale\undefined%
      \relax%
    \else%
      \setlength{\unitlength}{\unitlength * \real{\svgscale}}%
    \fi%
  \else%
    \setlength{\unitlength}{\svgwidth}%
  \fi%
  \global\let\svgwidth\undefined%
  \global\let\svgscale\undefined%
  \makeatother%
  \begin{picture}(1,0.61982019)%
    \lineheight{1}%
    \setlength\tabcolsep{0pt}%
    \put(0,0){\includegraphics[width=\unitlength,page=1]{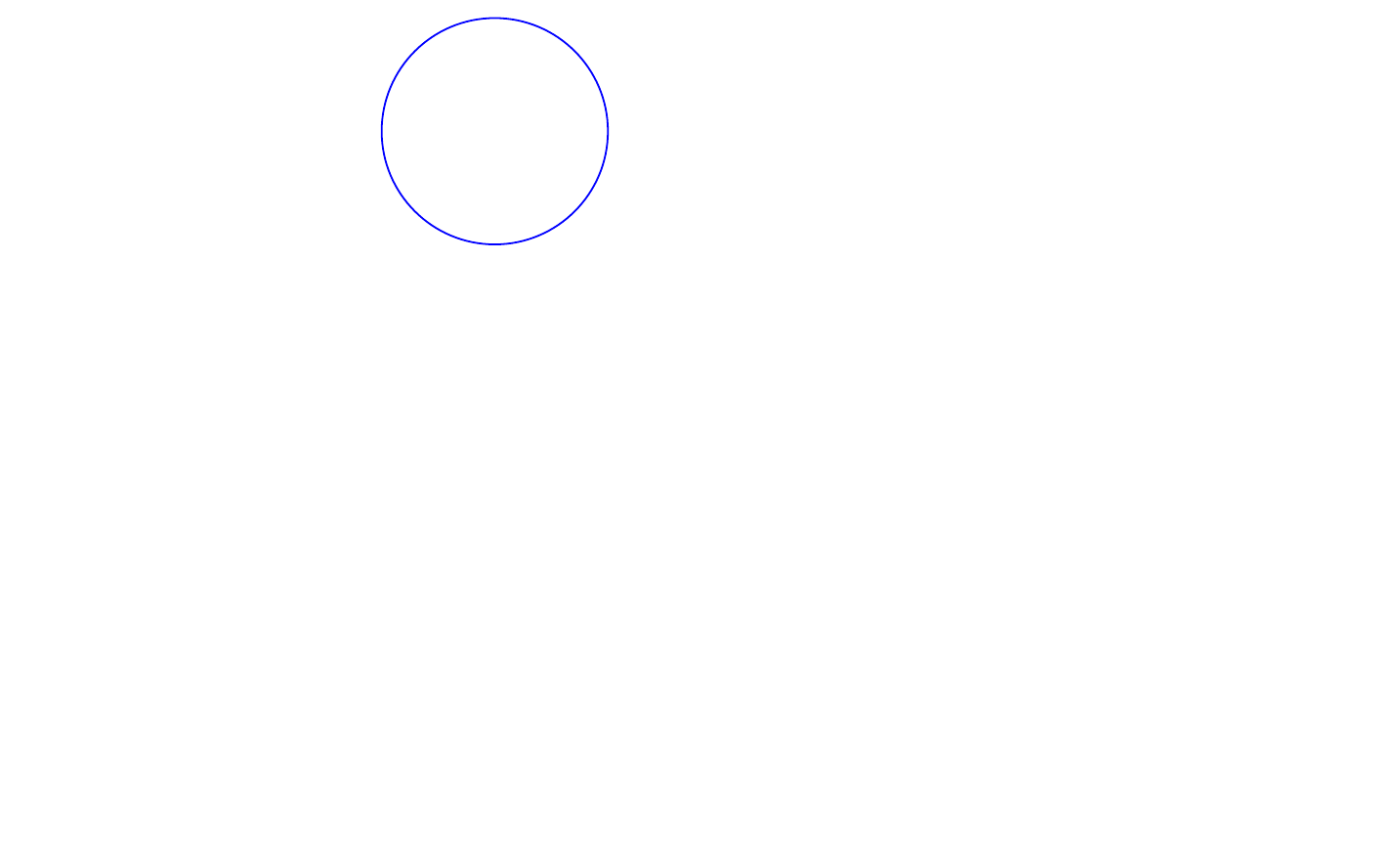}}%
    \put(-0.00055828,0.61113175){\color[rgb]{0,0,0}\makebox(0,0)[lt]{\lineheight{1.25}\smash{\begin{tabular}[t]{l}$(a)$\end{tabular}}}}%
    \put(-0.00055828,0.38690666){\color[rgb]{0,0,0}\makebox(0,0)[lt]{\lineheight{1.25}\smash{\begin{tabular}[t]{l}$(b)$\end{tabular}}}}%
    \put(-0.00055828,0.14451153){\color[rgb]{0,0,0}\makebox(0,0)[lt]{\lineheight{1.25}\smash{\begin{tabular}[t]{l}$(c)$\end{tabular}}}}%
    \put(0,0){\includegraphics[width=\unitlength,page=2]{example_calculation.pdf}}%
    \put(0.475,0.59){\color[rgb]{0,0,0}\makebox(0,0)[lt]{\lineheight{1.25}\smash{\begin{tabular}[t]{l}twisted \\oriented\\resolution\end{tabular}}}}%
  \end{picture}%
\endgroup%

	}\]
	\caption{An example of difference in twisted orientable slice surface and usual orientable slice surface}
	\label{fig:example_calculation}
\end{figure}
\begin{example}
	\label{ex:a calculation}
	Consider the following null homologous knot $K$ as drawn in part $(a)$ of  Figure \ref{fig:example_calculation}. It is a a positive knot with respect to the twisted orientation, with $4$ crossings, and $2$ circles in the twisted oriented orientation, so by Proposition \ref{prop:positive}, we have \[s^{BN}_{\rp{3}} = -2+4+1=3.\] A twisted orientable slice surface of $K$ with $\chi = -3$ is drawn in part $(b)$ of Figure \ref{fig:example_calculation} by adding $3$ bands compatible with the twisted orientation. 
	
	Therefore, a twisted orientable slice surface $\Sigma$ of $K$ which is actually orientable should have genus \[g(\Sigma) \geq \dfrac{3}{2,}\] so at least of genus $2$. But there exists a orientable slice surface of $K$ in the usual sense, which is of genus $1$, which is drawn in part $(c)$ of Figure \ref{fig:example_calculation}. Note that the arrows in part $(b)$ represent a twisted orientation, while the arrows in part $(c)$ represent a usual orientation.
	
	By inserting more and more full twists in this example, we have a family of null homologous knots $K_n$ in $\rp{3}$, whose two $s$-invariants $s^{BN}_{\rp{3}}(K)$ and $s^{Lee}_{\rp{3}}(K)$ defined using the Bar-Natan and Lee deformation, respectively, could have arbitrary large differences in their absolute values. Here $s^{Lee}_{\rp{3}}(K)$ refers to the $s$-invariant defined in \cite{manolescu2023rasmussen} using the Lee deformation. See Figure \ref{fig:infinite_difference} for a diagram of $K_n$. It is a positive knot with respect to the twisted orientation and a negative knot with respect to the usual orientation. It has $2n+2$ crossings, and the twisted oriented resolution has $2$ circles, while the oriented resolution has $2n+1$ circles Therefore,
	\[\abs{s^{BN}_{\rp{3}}(K_n)} = \abs{-2+(2n+2)+1}=2n+1\,\,\text{ and }\,\, \abs{s^{Lee}_{\rp{3}}(K_n)} = \abs{-(-(2n+1)+(2n+2)+1)} =2.\]
\end{example}
		\begin{figure}[h]
	\[{
		\fontsize{7pt}{9pt}\selectfont
		\def\svgscale{0.6}
\begingroup%
  \makeatletter%
  \providecommand\color[2][]{%
    \errmessage{(Inkscape) Color is used for the text in Inkscape, but the package 'color.sty' is not loaded}%
    \renewcommand\color[2][]{}%
  }%
  \providecommand\transparent[1]{%
    \errmessage{(Inkscape) Transparency is used (non-zero) for the text in Inkscape, but the package 'transparent.sty' is not loaded}%
    \renewcommand\transparent[1]{}%
  }%
  \providecommand\rotatebox[2]{#2}%
  \newcommand*\fsize{\dimexpr\f@size pt\relax}%
  \newcommand*\lineheight[1]{\fontsize{\fsize}{#1\fsize}\selectfont}%
  \ifx\svgwidth\undefined%
    \setlength{\unitlength}{329.5131564bp}%
    \ifx\svgscale\undefined%
      \relax%
    \else%
      \setlength{\unitlength}{\unitlength * \real{\svgscale}}%
    \fi%
  \else%
    \setlength{\unitlength}{\svgwidth}%
  \fi%
  \global\let\svgwidth\undefined%
  \global\let\svgscale\undefined%
  \makeatother%
  \begin{picture}(1,0.82312809)%
    \lineheight{1}%
    \setlength\tabcolsep{0pt}%
    \put(0,0){\includegraphics[width=\unitlength,page=1]{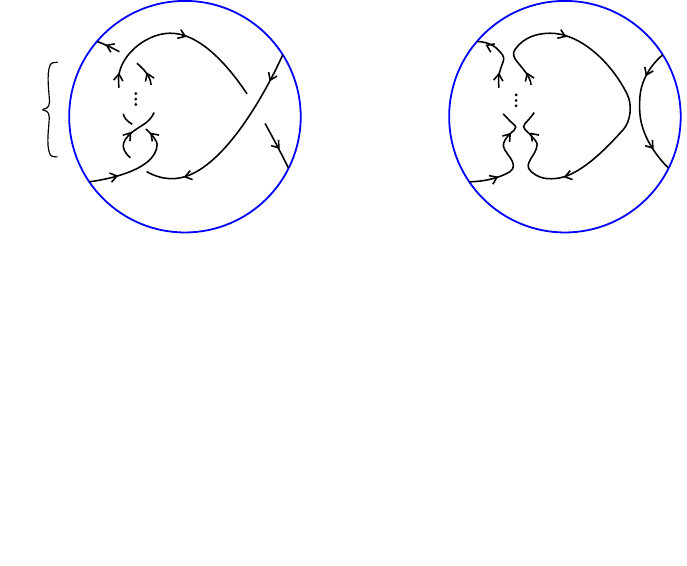}}%
    \put(-0.1,0.6550721){\color[rgb]{0,0,0}\makebox(0,0)[lt]{\lineheight{1.25}\smash{\begin{tabular}[t]{l}$2n+1$\\crossings\end{tabular}}}}%
    \put(0,0){\includegraphics[width=\unitlength,page=2]{infinite_difference.pdf}}%
    \put(-0.1,0.21010217){\color[rgb]{0,0,0}\makebox(0,0)[lt]{\lineheight{1.25}\smash{\begin{tabular}[t]{l}$2n+1$\\crossings\end{tabular}}}}%
    \put(0.02,0.42){\color[rgb]{0,0,0}\makebox(0,0)[lt]{\lineheight{1.25}\smash{\begin{tabular}[t]{l}$K_n$ with a twisted orientation\end{tabular}}}}%
    \put(0.02,-0.01){\color[rgb]{0,0,0}\makebox(0,0)[lt]{\lineheight{1.25}\smash{\begin{tabular}[t]{l}$K_n$ with a usual orientation\end{tabular}}}}%
    \put(0.65,0.42){\color[rgb]{0,0,0}\makebox(0,0)[lt]{\lineheight{1.25}\smash{\begin{tabular}[t]{l}a twsited oriented resolution of $K_n$\end{tabular}}}}%
    \put(0.67,-0.01){\color[rgb]{0,0,0}\makebox(0,0)[lt]{\lineheight{1.25}\smash{\begin{tabular}[t]{l}an oriented resolution of $K_n$\end{tabular}}}}%
  \end{picture}%
\endgroup%

	}\]
	\caption{An example of knots with different $\abs{s^{BN}_{\rp{3}}}$ and $\abs{s^{Lee}_{\rp{3}}}$}
	\label{fig:infinite_difference}
\end{figure}

\section{Further directions}
\label{sec:further directions}
In this section we discuss some further directions to explore, listed in ascending order of scope.

\begin{itemize}

	\item 

In \cite{manolescu2023rasmussen}, Manolescu and Willis defined the Lee homology and $s$-invariant for both null homologous and homologically essential links (class-$0$ and class-$1$ links in their notion), while we only define the Bar-Natan homology and $s$- invariant for null homologous links in this paper. It is natural to ask what the counterpart for homologically essential links should be using Bar-Natan homology. Note that the algebraic structures of Khovanov homology (and Lee homology) for null homologous and homologically essential links are quite different: In any resolution of the link diagram of a homologically essential link, there will be a homologically essential unknot, and there will be no $1$-$1$ bifurcation at all in the edge maps. So instead of the extra map $f:V\to V$, what we want in the homologically essential case is a bimodule over the Frobenius algebra $V$, which will be assigned to the homologically essential unknot in each resolution.

\item
Naturally, one would like to ask how this Bar-Natan chain complex $\mathit{CBN}(L)$ is related to some equivariant version of the Bar-Natan chain complex $\mathit{CBN}(\widetilde{L})$ of the double cover $\widetilde{L}$ of $L$ in $S^3$, and also the similar question for the Khovanov chain complex. When $\widetilde{L}$ is periodic or strongly invertible, there is a lot of work relating the Khovanov chain complex/stable homotopy type of the quotient link to the corresponding chain complex/stable homotopy type of the link itself with the action. See, for example, \cite{stoffregen2022localization}, \cite{MR4297186} for periodic links, and \cite{MR4286365}, \cite{lipshitz2022khovanov} for strongly invertible knots. Since the Bar-Natan homology is much easier to describe than the Khovanov homology, we do obtain an inequality at the level of Bar-Natan homology: \[dim(\mathit{HBN}(L)) \leq dim(\mathit{HBN}(\widetilde{L})),\] where $\mathit{HBN}(L)$ is the Bar-Natan homology of a null homologous link $L$ in $\rp{3}$ defined in this paper, and $\mathit{HBN}(\widetilde{L})$ is the usual Bar-Natan homology for links in $S^3$. The reason is simple: As proved in Proposition \ref{prop:canonical generator},  $\mathit{HBN}(L)$ has a basis identified with the set of twisted orientations on $L$, which is by definition a subset of orientations on $\widetilde{L}$, and $\mathit{HBN}(\widetilde{L})$ has a basis identified with the set of all orientations on $\widetilde{L}$. This suggests a possibility of a spectral sequence relating $\mathit{HBN}(L)$ and $\mathit{HBN}(\widetilde{L})$, and perhaps for the Khovanov homology as well. One possible starting point is to look at a link diagram of $\widetilde{L}$ as the closure of $T\circ F(T)$, as in Figure \ref{fig:twisted-orientation_in_double_cover}, on which one can define formally an action of the involution $\tau$ on the chain complex level for $\mathit{CBN}(\widetilde{L})$ and $CKh(\widetilde{L})$. 
\item
For knots in $S^3$, the Lee and Bar-Natan deformation of the usual Khovanov homology are closely related to each other. For example, if $\mathbb{F}$ is a field of characteristic other than $2$, then the $s$-invariant defined using the Bar-Natan deformation agrees with that defined using the Lee deformation. See Proposition 3.1 in \cite{MR2320159}.  However, in $\rp{3}$, the behavior is quite different. For example, the class of slice surface whose genus bounded by the $s$-invariant is not the same. The reason for this difference lies in the assignment of the map to the $1$-$1$ bifurcation: it is the identity map in the Bar-Natan deformation, while it is $0$ in the Lee deformation. It might be interesting to look at other extensions of the $s$-invariants to $3$-manifolds using the Lee deformation, \textit{e.g.} \cite{MR3882203} for $S^1\times D^2$, \cite{manolescu2022generalization} for connected sums of $S^1\times S^2$, and see what happens if one use the Bar-Natan deformation instead of the Lee deformation in these cases. 

\end{itemize}
\bibliographystyle{amsalpha}
\bibliography{biblio}
\end{document}